\numberwithin{equation}{section}
\newtheorem{thm}{Theorem}[section]
\newtheorem{lemma}[thm]{Lemma}
\newtheorem{prop}[thm]{Proposition}
\newtheorem{remark}[thm]{Remark}
\newtheorem{definition}{Definition}[section]
\newcommand{\R}{\mathbb{R}}
\newcommand{\C}{\mathbb{C}}
\newcommand{\N}{\mathbb{N}}
\newcommand{\Z}{\mathbb{Z}}
\newcommand{\X}{\mathcal{X}}
\newcommand{\HH}{\mathcal{H}}
\newcommand{\ZZ}{\mathcal{Z}}
\newcommand{\FF}{\mathcal{F}}
\newcommand{\NN}{\mathcal{N}}
\begin{document}

\title[Controllability Gear--Grimshaw with Critical Size Restrictions]{Neumann Boundary Controllability of the Gear--Grimshaw System With Critical Size Restrictions on the Spacial Domain}

\author[Capistrano--Filho]{Roberto A. Capistrano--Filho}

\address{%
Department of Mathematics\\
Federal University of Pernambuco, UFPE, \\
CEP 50740-545, Recife, PE\\
Brazil.}

\email{capistranofilho@dmat.ufpe.br}

\author[Gallego]{Fernando A. Gallego}

\address{%
Institute of Mathematics \\
Federal University of Rio de Janeiro, UFRJ\\
P.O. Box 68530, CEP 21945-970, Rio de Janeiro, RJ\\
Brazil.}

\email{fgallego@ufrj.br, ferangares@gmail.com}

\author[Pazoto]{Ademir F. Pazoto}

\address{%
Institute of Mathematics \\
Federal University of Rio de Janeiro, UFRJ\\
P.O. Box 68530, CEP 21945-970, Rio de Janeiro, RJ\\
Brazil.}

\email{ademir@im.ufrj.br}

\subjclass{Primary 35Q53; Secondary 37K10, 93B05, 93D15}

\keywords{Gear--Grimshaw system, exact boundary controllability, Neumann boundary conditions, Dirichlet boundary conditions, critical set}

\date{}

\begin{abstract}
In this paper we study the boundary controllability of the Gear-Grimshaw system posed on a finite domain $(0,L)$, with Neumann boundary conditions:
\begin{equation}
\label{abs}
\begin{cases}
u_t + uu_x+u_{xxx} + a v_{xxx} + a_1vv_x+a_2 (uv)_x =0, & \text{in} \,\, (0,L)\times (0,T),\\
c v_t +rv_x +vv_x+abu_{xxx} +v_{xxx}+a_2buu_x+a_1b(uv)_x  =0,  & \text{in} \,\, (0,L)\times (0,T), \\
u_{xx}(0,t)=h_0(t),\,\,u_x(L,t)=h_1(t),\,\,u_{xx}(L,t)=h_2(t), & \text{in} \,\, (0,T),\\
v_{xx}(0,t)=g_0(t),\,\,v_x(L,t)=g_1(t),\,\,v_{xx}(L,t)=g_2(t), & \text{in} \,\, (0,T),\\
u(x,0)= u^0(x), \quad v(x,0)=  v^0(x), & \text{in} \,\, (0,L).\nonumber
\end{cases}
\end{equation}
We first prove that the corresponding linearized system around the origin is exactly controllable in $(L^2(0,L))^2$ when $h_2(t)=g_2(t)=0$. In this case, the exact controllability property is derived for any $L>0$ with control functions $h_0, g_0\in H^{-\frac{1}{3}}(0,T)$ and $h_1, g_1\in L^2(0,T)$. If we change the position of the controls and consider $h_0(t)=h_2(t)=0$ (resp. $g_0(t)=g_2(t)=0)$ we obtain the result with control functions $g_0, g_2\in H^{-\frac{1}{3}}(0,T)$ and $h_1, g_1\in L^2(0,T)$ if and only if the length $L$  of the spatial domain $(0,L)$ belongs to a countable set. In all cases the regularity of the controls are sharp in time.
If only one control act in the boundary condition, $h_0(t)=g_0(t)=h_2(t)=g_2(t)=0$ and $g_1(t)=0$ (resp. $h_1(t)=0$), the linearized system is proved to be exactly controllable for small values of the length $L$ and large time of control $T$. Finally, the nonlinear system is shown to be locally exactly controllable \textit{via} the contraction mapping principle, if the associated linearized systems are exactly controllable.
\end{abstract}

\maketitle

\section{Introduction}

\subsection{Setting of the Problem}
The goal of this paper is to investigate the boundary controllability properties of the nonlinear dispersive system
\begin{equation}
\label{gg1}
\begin{cases}
u_t + uu_x+u_{xxx} + a v_{xxx} + a_1vv_x+a_2 (uv)_x =0, & \text{in} \,\, (0,L)\times (0,T),\\
c v_t +rv_x +vv_x+abu_{xxx} +v_{xxx}+a_2buu_x+a_1b(uv)_x  =0,  & \text{in} \,\, (0,L)\times (0,T), \\
u(x,0)= u^0(x), \quad v(x,0)=  v^0(x), & \text{in} \,\, (0,L),
\end{cases}
\end{equation}
with the following boundary conditions
\begin{equation}\label{gg2}\begin{cases}
u_{xx}(0,t)=h_0(t),\,\,u_x(L,t)=h_1(t),\,\,u_{xx}(L,t)=h_2(t),\\
v_{xx}(0,t)=g_0(t),\,\,v_x(L,t)=g_1(t),\,\,v_{xx}(L,t)=g_2(t).
\end{cases}
\end{equation}
 In \eqref{gg1}, $a_1, a_2, a, b, c$ and $r$ are real constants, $u=u(x,t)$ and $v=v(x,t)$ are real-valued functions of the two variables $x$ and $t$ and subscripts indicate partial differentiation. The boundary functions $h_i$ and $g_i$, for $i=0,1,2$, are considered as control inputs acting on the boundary conditions. Our purpose is to see weather we can force the solutions of the system to have certain properties by choosing appropriate control inputs. More precisely, we are mainly concerned with the following exact control problem:
\vglue 0.2 cm
 {\em  Given $T>0$ and $u^0,v^0,u^1,v^1\in L^2(0,L)$, can one find appropriate control inputs $h_i$, $g_i$, for $i=0,1,2$,  such that the corresponding solution $(u,v)$ of \eqref{gg1}-\eqref{gg2} satisfies
\begin{equation}\label{exactcontrol'a}
(u(x,T),v(x,T))=(u^1(x),v^1(x))?
\end{equation}}

In order to provide the tools to handle with this problem, we assume that the coefficients $a, b, c$ and $r$ satisfy
\begin{equation}\label{coef}
b, c \mbox{ and } r \mbox{ are positive and } 1-a^2b >0.
\end{equation}

System \eqref{gg1} was derived by Gear and Grimshaw in \cite{geargrimshaw1984} as a model to describe strong interactions of two long internal gravity waves in a stratified fluid, where the two waves are assumed to correspond to different modes of the linearized equations of motion (we also refer to \cite{BoPoSaTo,SaTz} for an extensive discussion on the physical relevance of the system). This somewhat complicated model has the structure of a pair of Korteweg-de Vries (KdV) equations coupled through both dispersive and nonlinear effects and has been object of intensive research in recent year. It is a special case of a broad class of nonlinear evolution equations for which the well-posedness theory associated to the pure initial-value problem posed on the whole real line $\mathbb{R}$, or on a finite interval with periodic boundary conditions, has been intensively investigated. By contrast, the mathematical theory pertaining to the study of the boundary value problem is considerably less advanced, specially in what concerns the study of the controllability properties. As far as we know, the controllability results for system \eqref{gg1} was first obtained in \cite{MiOr}, when the model is posed on a periodic domain and $r=0$. In this case, a diagonalization of the main terms allows to the decouple the corresponding linear system into two scalar KdV equations and use the previous results available in the literature. Later on,  assuming that \eqref{coef} holds,  Micu \textit{et al.} \cite{micuortegapazoto2009} proved the local exact boundary controllability property for the nonlinear system, posed on a bounded interval, considering the following boundary conditions:
\begin{equation}\label{gg3}\begin{cases}
u(0,t)=0,\,\,u(L,t)=f_1(t),\,\,u_x(L,t)=f_2(t),\\
v(0,t)=0,\,\,v(L,t)=k_1(t),\,\,v_x(L,t)=k_2(t).
\end{cases}
\end{equation}
The analysis developed in \cite{micuortegapazoto2009} was inspired by the results obtained by Rosier in \cite{rosier} for the scalar KdV equation. It combines the analysis of the linearized system and the Banach's fixed point theorem. Following the classical duality approach \cite{dolecki,lions}, the exact controllability of system linearized system is equivalent to an observability for the adjoint system. Then, the problem is reduced to prove a nonstandard unique continuation property of the eigenfunctions of the corresponding differential operator. Their main result reads as follows:
\vglue 0.2 cm
\noindent
{\bf Theorem A }(Micu \textit{et al.} \cite{micuortegapazoto2009}) {\em Let $L>0$ and $T >0$. Then there exists a constant $\delta>0$, such that, for any initial and final data $u^0, v^0, u^1, v^1\in L^2(0, L)$ verifying $$||(u^0, v^0)||_{(L^2(0,L))^2}\leq\delta \quad\text{and}\quad ||(u^1, v^1)||_{(L^2(0,L))^2}\leq\delta,$$ there exist four control functions $f_1, k_1\in H^1_0(0, T)$ and $f_2, k_2 \in L^2(0,T)$, such that the solution $$(u, v)\in C([0,T];(L^2(0,L))^2)\cap L^2(0,T;(H^1(0, L))^2)\cap H^1(0,T;(H^{-2}(0, L))^2)$$ of \eqref{gg1}-\eqref{gg3} verifies \eqref{exactcontrol'a}.}

Later on, the same problem was addressed by Cerpa and Pazoto \cite{cerpapazoto2011} when only two controls act on the Neumann boundary conditions, i.e., assuming that $f_1=k_1=0$. In this case, the analysis of the linearized system is much more complicated, therefore the authors used a direct approach based on the multiplier technique that gives the observability inequality for small values of the length $L$ and large time of control $T$. The fixed point argument, as well as, the existence and regularity results needed in order to consider the nonlinear system run exactly in the same way as in \cite{micuortegapazoto2009}.

The program of this work was carried out for a particular choice of boundary conditions
and aims to establish as a fact that such a model predicts the interesting controllability properties initially
observed for the KdV equation. Therefore, to introduce the reader to the theory developed for KdV with the boundary conditions of types \eqref{gg3} and \eqref{gg2}, we present below a summary of the results achieved in \cite{rosier} and \cite{caicedo_caspistrano_zhang_2015}, respectively.

Rosier, in \cite{rosier}, studied the following boundary control problem for the KdV equation posed on the finite domain $(0,L)$
\begin{equation}
\left\{
\begin{array}
[c]{lll}%
u_t+u_x+uu_x+u_{xxx}=0 &  & \text{ in } (0,L)\times(0,T),\\
u(0,t)=0,\text{ }u(L,t)=0,\text{ }u_x(L,t)=g(t) &  & \text{ in }(0,T),\\
u(x,0)=u^0(x) & & \text{ in }(0,L),
\end{array}
\right.  \label{2}
\end{equation}
where the boundary value function $g(t)$ is considered as a control input. First, the author studies the associated linear system
\begin{equation}
\left\{
\begin{array}
[c]{lll}%
u_t+u_x+u_{xxx}=0 &  & \text{ in } (0,L)\times(0,T),\\
u(0,t)=0,\text{ }u(L,t)=0,\text{ }u_x(L,t)=g(t) &  & \text{ in }(0,T),\\
u(x,0)=u^0(x) & & \text{ in }(0,L)
\end{array}
\right.  \label{2a}
\end{equation}
and discovered the so-called {\em critical length} phenomena, i.e., whether the system (\ref{2a}) is exactly controllable depends on the length $L$ of the spatial domain $(0,L)$. More precise, the following result was proved:

\smallskip
 \noindent
 {\bf Theorem B} (Rosier \cite{rosier})  {\em The linear system  \eqref{2a}  is exactly controllable in the space  $L^2(0,L)$ if and only if the length $L$  of the spatial domain $(0,L)$ does not belong to the set}
\begin{equation}
\mathcal{N}:=\left\{  \frac{2\pi}{\sqrt{3}}\sqrt{k^{2}+kl+l^{2}}
\,:k,\,l\,\in\mathbb{N}^{\ast}\right\}  . \label{critical}
\end{equation}

\smallskip
  Then, by using a fixed point argument, the controllability result was extended to the nonlinear system when $L\notin\mathcal{N}$.
\vglue 0.2 cm
\noindent
{\bf Theorem C} (Rosier \cite{rosier})\textit{Let $T>0$ be given. If $L\notin\mathcal{N}$, there exists $\delta>0$, such that, for any $u^0,u^T\in L^2(0,L)$ with $$||u^0||_{L^2(0,L)}+||u^T||_{L^2(0,L)}\leq\delta,$$ one can  find a  control input $g\in L^2(0,T)$, such that the  nonlinear system \eqref{2} admits a unique solution $$u\in C([0,T];L^2(0,L))\cap L^2(0,T;H^1(0,L))$$
satisfying $$u(x,T)=u^T(x).$$}

More recently, in \cite{caicedo_caspistrano_zhang_2015}, Caicedo \textit{et al.} investigated the boundary control problem of the KdV equation with new boundary conditions, namely, the Neumann boundary conditions:
\begin{equation}
\left\{
\begin{array}
[c]{lll}%
u_t+ (1+\beta )u_x+u_{xxx}=0 &  & \text{ in } (0,L)\times(0,T),\\
u_{xx}(0,t)=0,\text{ }u_x(L,t)=h(t),\text{ }u_{xx}(L,t)=0 &  & \text{ in }(0,T),\\
u(x,0)=u^0(x) & & \text{ in }(0,L).
\end{array}
\right.  \label{gg5}
\end{equation}
In \eqref{gg5}, $\beta $ is a given real constant and $g$ a control input. For any $\beta \ne -1$, the authors obtained the following set of
{\em critical lengths}
\begin{equation}
\mathcal{R}_{\beta} :=\left\{  \frac{2\pi}{\sqrt{3(1+\beta)}}\sqrt{k^{2}+kl+l^{2}}
\,:k,\,l\,\in\mathbb{N}^{\ast}\right\}\cup\left\{\frac{k\pi}{\sqrt{\beta +1}}:k\in\mathbb{N}^{\ast}\right\},
\label{critical_new}
\end{equation}
and proved that the following result holds:
\vglue 0.2 cm
\noindent
{\bf Theorem D} (Caicedo \textit{et al.} \cite{caicedo_caspistrano_zhang_2015}){\em
\begin{itemize}
\item[(i)] If $\beta \ne -1$, the linear system (\ref{gg5}) is exactly controllable in the space $L^2 (0,L)$  if and only if the length L of the spatial domain $(0, L)$ does not belong to the set $\mathcal{R}_{\beta}$.
\item[(ii)] If $\beta =-1$, then the system (\ref{gg5}) is not  exact controllable in the space $L^2 (0,L)$ for any $L>0$.
\end{itemize}}
In addition, for the nonlinear system
\begin{equation}
\left\{
\begin{array}
[c]{lll}%
u_t+u_x+uu_x+u_{xxx}=0 &  & \text{ in } (0,L)\times(0,T),\\
u_{xx}(0,t)=0,\text{ }u_x(L,t)=h(t),\text{ }u_{xx}(L,t)=0 &  & \text{ in }(0,T),\\
u(x,0)=u_0(x) & & \text{ in }(0,L),
\end{array}
\right.  \label{gg6}
\end{equation}
the result below was proved by using a fixed point argument:
\vglue 0.2 cm
\noindent
{\bf Theorem E} (Caicedo \textit{et al.} \cite{caicedo_caspistrano_zhang_2015}) {\em
Let $T>0$,  $\beta \ne -1$ and $L\notin\mathcal{R}_{\beta} $ be given.  There exists $\delta>0$, such that, for any $u^0,u^T\in L^2(0,L)$ with $$||u^0-\beta ||_{L^2(0,L)}+||u^T-\beta ||_{L^2(0,L)}\leq\delta,$$ one can find  a control input $h\in L^2(0,T)$, such that the system \eqref{gg6} admits unique solution
$$u\in C([0,T];L^2(0,L))\cap L^2(0,T;H^1(0,L))$$
satisfying
$$u(x,T)=u^T(x).$$}

Both theorems, Theorems B and D, were proved following the classical duality approach \cite{dolecki,lions} which reduces the problem to prove an observability inequality for the solutions of the corresponding adjoint system. Then, the controllability is obtained with the aid of a compactness argument that leads the issue to a nonstandard unique continuation principle for the eigenfunctions of the differential operator associated to the model. The critical lengths in \eqref{critical} and \eqref{critical_new} are such that there are eigenfunctions of the linear scalar problem for which the observability inequality associated to the adjoint system fails\footnote{In the case of $L\in\mathcal{N}$ (resp. $L\in\mathcal{R}_{\beta}$), Rosier (resp. Caicedo \textit{et al.} in \cite{caicedo_caspistrano_zhang_2015}) proved in \cite{rosier} that the associated linear system \eqref{2a} is not controllable; there exists a finite-dimensional subspace of $L^2(0,L)$, denoted by $\mathcal{M}=\mathcal{M}(L)$, which is unreachable from $0$ for the linear system.  More precisely, for every nonzero state $\psi\in\mathcal{M}$, $g\in L^2(0,T)$  and $u\in C([0,T];L^2(0,L))\cap L^2(0,T;H^1(0,L))$ satisfying \eqref{2a} and $u(\cdot,0)=0$, one has $u(\cdot,T)\neq\psi$.
A spatial domain $(0,L)$ is called \textit{critical}  for the system (\ref{2a}) (resp. \eqref{gg5}) if its domain length $L\in\mathcal{N}$ (resp. $L\in\mathcal{R}_{\beta}$).}.  However, in \cite{caicedo_caspistrano_zhang_2015}, the authors encountered some difficulties that require special attention. For instance, the adjoint system of the linear system \eqref{gg5} is given by
\begin{equation}
\left\{
\begin{array}
[c]{lll}%
\psi_t+(1+\beta )\psi_x+\psi_{xxx}=0 &  &\text{ in } (0,L)\times(0,T),\\
(1+\beta )\psi(0,t)+\psi_{xx}(0,t)=0&  &\text{ in } (0,T),\\
(1+\beta )\psi(L,t)+\psi_{xx}(L,t)=0&  &\text{ in } (0,T),\\
\psi_x(0,t)=0&  &\text{ in } (0,T),\\
\psi(x,T)=\psi^T(x) & &\text{ in }(0,L).
\end{array}
\right.  \label{gg6_a}
\end{equation}
The exact controllability of system \eqref{gg5} is equivalent to the following observability inequality for the adjoint system \eqref{gg6_a}:
\begin{equation*}
||\psi^T||_{L^2(0,L)}\leq C||\psi_x(L,\cdot)||_{L^2(0,T)},
\end{equation*}
for some $C>0$. Nonetheless, the usual multiplier method and compactness arguments used to deal with the system \eqref{gg6_a} only lead to
\begin{equation}
||\psi^T||^2_{L^2(0,L)}\leq C_1||\psi_x(L,\cdot)||^2_{L^2(0,T)}+C_2||\psi(L,\cdot)||^2_{L^2(0,T)},\label{gg7_a}
\end{equation}
where $C_1$ and $C_2$ are positive constants. In order to absorb the extra term present in \eqref{gg7_a}, Caicedo \textit{et al.}  derived a technical result, which reveals some hidden regularity (sharp trace regularities) for solutions of the adjoint system \eqref{gg6_a}:
\vglue 0.2 cm
\noindent
{\bf Theorem F} (Caicedo \textit{et al.} \cite{caicedo_caspistrano_zhang_2015}) {\em For any $\psi^T\in L^2(0,L)$, the solution $$\psi\in C([0,T];L^2(0,L))\cap L^2(0,T;H^1(0,L))$$ of the problem \eqref{gg6_a} possesses the following sharp trace properties
\begin{equation}
\sup_{x\in(0,L)}||\partial^r_x\psi(x,\cdot)||_{H^{\frac{1-r}{3}}(0,T)}\leq C_r||\psi^T||_{L^2(0,L)},
\label{9}
\end{equation}
for $r=0,1,2$, where $C_r$ are positive constants.}

\vglue 0.2 cm

\noindent Estimate \eqref{9} is then combined with compactness argument to remove the extra term in \eqref{gg7_a}. We remark that the sharp Kato smoothing properties obtained by Kenig, Ponce and Vega \cite{KePoVe} for the solutions of the KdV equation posed on the line, played an important role in the proof of the previous result. The same strategy has been successfully applied by Cerpa \textit{et al.} \cite{cerizh} for the study of a similar boundary controllability problem.

\subsection{Main Result}

We are now in position to return considerations to the control properties of the system \eqref{gg1}. First, we prove that the corresponding linear system
with the following boundary conditions
\begin{equation*}
\begin{cases}
u_{xx}(0,t)=h_0(t),\,\,u_x(L,t)=h_1(t),\,\,u_{xx}(L,t)=0,\\
v_{xx}(0,t)=g_0(t),\,\,v_x(L,t)=g_1(t),\,\,v_{xx}(L,t)=0,
\end{cases}
\end{equation*}
is exactly controllable in $(L^2(0,L))^2$ with controls $h_0$, $g_0\in H^{-\frac{1}{3}}(0,T)$ and $h_1$, $g_1\in L^2(0,T)$. In this case, any restriction on the length $L$ of the spatial domain is required. However, if we change the position of the controls a critical size restriction can appear. This is the case when we consider the following boundary conditions
\begin{equation*}
\begin{cases}
u_{xx}(0,t)=0,\,\,u_x(L,t)=h_1(t),\,\,u_{xx}(L,t)=0,\\
v_{xx}(0,t)=g_0(t),\,\,v_x(L,t)=g_1(t),\,\,v_{xx}(L,t)=g_2(t).
\end{cases}
\end{equation*}
In this case, the exact controllability result in $(L^2(0,L))^2$ is derived with controls $g_0$, $g_2\in H^{-\frac{1}{3}}(0,T)$ and $h_1$, $g_1\in L^2(0,T)$ if and only if the length $L$ does not belong of the following set

\begin{equation}
\FF_r:= \left\lbrace 2\pi k \sqrt{\frac{1-a^2b}{r}}: k \in \N^{*}\right\rbrace\cup \left\lbrace \pi \sqrt{\frac{(1-a^2b)\alpha(k,l,m,n,s)}{3r}}:k, l, m, n, s \in \N^{*} \right\rbrace,
\label{critical_f}
\end{equation}
where
\begin{align*}
\alpha:=\alpha(k,l,m,n,s)=&5k^2+8l^2+9m^2+8n^2+5s^2+8kl+6km\\
&+4kn+2ks+12ml+8ln+3ls+12mn+6ms+8ns.
\end{align*}
As in \cite{caicedo_caspistrano_zhang_2015}, the hidden regularity for the corresponding adjoint system \eqref{gg1} was required. Here, the result is given in Proposition \ref{hiddenregularities}, which is the key point to prove the controllability result.

Finally, for small values of the length $L$ and large time of control $T$ we derive a exact controllability result in $(L^2(0,L))^2$ by assuming that the controls $g_1(t)=0$ (resp. $h_1(t)=0$) and $g_0(t)=g_2(t)=0$. In this case, the analysis of the linearized system is much more complicated, therefore we use a direct approach based on the multipliers technique, as in \cite{cerpapazoto2011}. In all cases, the result obtained for the linear system allows to prove the local controllability property of the nonlinear system \eqref{gg1} by means of a fixed point argument.

The analysis describe above are summarized in the main result of the paper, Theorem \ref{main_theo}. However, in order to make the reading easier, throughout the paper we use the following notation for the boundary functions:
\begin{itemize}
\item[] $\vec{h}_1=(0,h_1,0),\,\, \vec{g}_1=(g_0,g_1,g_2)$\,\,  and \,\, $\vec{h}_2=(h_0,h_1,h_2),\,\, \vec{g}_2=(0,g_1,0)$,
\item[] $\vec{h}_3=(h_0,h_1,0),\,\,\vec{g}_3=(g_0,g_1,0)$\,\, and \,\, $\vec{h}_4=(0,h_1,h_2),\,\,\vec{g}_4=(0,g_1,g_2)$,
\item[] $\vec{h}_5=(0,h_1,0),\,\,\vec{g}_5=(0,0,0)$\,\,\,\,\,\,\,\,\,\, and \,\, $\vec{h}_6=(0,0,0),\,\,\vec{g}_6=(0,g_1,0)$.
\end{itemize}
We also introduce the space $\mathcal{X}:=(L^2(0,L))^2$ endowed with the inner product
\begin{equation*}
\left\langle (u,v) , (\varphi
,\psi)\right\rangle := \int_0^L u(x)\varphi(x) dx + \frac{b}{c}\int_0^L v(x)\psi(x) dx,\qquad \forall (u,v), (\varphi,\psi) \in \X,
\end{equation*}
and the spaces
$$\HH_T:=H^{-\frac{1}{3}}(0,T)\times L^2(0,T)\times H^{-\frac{1}{3}}(0,T) \mbox{ and } \ZZ_T:= C([0,T];(L^2(0,L))^2)\cap L^2(0,T,(H^1(0,L))^2)$$
endowed with their natural inner products.

Thus, our main result reads as follows:
\begin{thm}\label{main_theo}
Let $T>0$. Then, there exists $\delta>0$, such that,  for  any $(u^0,v^0), (u^1,v^1) \in \X:=(L^2(0,L))^2$ verifying
$$\|(u^0,v^0)\|_{\X} + \|(u^1,v^1)\|_{\X} \leq \delta,$$
the following holds:
\begin{enumerate}
\item[(i)] If $L \in (0,\infty) \setminus \FF_r$, one can find  $\vec{h}_i, \vec{g}_i \in \HH_T$, for $i=1,2$, such that the system \eqref{gg1}-\eqref{gg2} admits a unique solution $(u,v) \in \ZZ_T$ satisfying \eqref{exactcontrol'a}.
\item[(ii)] For any $L>0$, one can find  $\vec{h}_i, \vec{g}_j \in \HH_T$, for $j=3,4$, such that the system \eqref{gg1}-\eqref{gg2} admits a unique solution $(u,v) \in \ZZ_T$, satisfying \eqref{exactcontrol'a}.
\item[(iii)]  Let $T>0$ and $L>0$ satisfying
\begin{align*}
1>\frac{\beta C_T}{T}\left[L +\frac{r}{c} \right],
\end{align*}
where $C_T$ is the constant in \eqref{hr4} and $\beta$ is the constant given by the embedding $H^{\frac{1}{3}}(0,T) \subset L ^2(0,T)$. Then, one can find  $\vec{h}_k, \vec{g}_k \in \HH_T$, for $k=5,6$, such that the system \eqref{gg1}-\eqref{gg2} admits a unique solution $(u,v) \in \ZZ_T$, satisfying \eqref{exactcontrol'a}.
\end{enumerate}
 \end{thm}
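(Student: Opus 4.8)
The plan is to reduce, in each of the three cases, the nonlinear controllability statement to the exact controllability of the associated \emph{linearized} system, which has been established in the preceding sections (case (i) under the hypothesis $L\in(0,\infty)\setminus\FF_r$ with the control configurations $\vec{h}_i,\vec{g}_i$, $i=1,2$; case (ii) for arbitrary $L>0$ with $\vec{h}_i,\vec{g}_j$, $j=3,4$; and case (iii) under the smallness condition relating $L$, $T$, $C_T$ and $\beta$ with $\vec{h}_k,\vec{g}_k$, $k=5,6$), and then to close the argument by the Banach fixed point theorem. Since the three cases differ only in which boundary data act as controls and in the hypotheses guaranteeing linear controllability, while the nonlinear step is identical, I would isolate that step and run it once in an abstract form.

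First, writing \eqref{gg1} as $\partial_t(u,v)+A(u,v)=F(u,v)$, where $A$ denotes the linear Gear--Grimshaw operator and
\[
F(u,v)=\Big(-(uu_x+a_1vv_x+a_2(uv)_x),\ -\tfrac{1}{c}\big(vv_x+a_2b\,uu_x+a_1b(uv)_x\big)\Big),
\]
the exact controllability of the linear system, combined with the linear well-posedness in $\ZZ_T$ and the sharp trace regularity of Proposition \ref{hiddenregularities}, furnishes in each case a bounded linear map
\[
\Lambda:\X\times\X\times L^1(0,T;\X)\to\ZZ_T,\qquad (u,v)=\Lambda\big((u^0,v^0),(u^1,v^1),F\big),
\]
such that $(u,v)$ solves the controlled linear system with source $F$, meets the endpoint conditions $(u,v)(\cdot,0)=(u^0,v^0)$ and $(u,v)(\cdot,T)=(u^1,v^1)$, has its controls in $\HH_T$ in the configuration prescribed by the case, and satisfies
\[
\norm{(u,v)}_{\ZZ_T}\le C\big(\norm{(u^0,v^0)}_{\X}+\norm{(u^1,v^1)}_{\X}+\norm{F}_{L^1(0,T;\X)}\big).
\]
It is precisely here that the hypotheses $L\notin\FF_r$, ``$L$ arbitrary'', or the smallness condition enter, through the linear theory.

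The key analytic ingredient is then a bilinear estimate placing the nonlinearity in $L^1(0,T;\X)$. Using $\ZZ_T\hookrightarrow L^\infty(0,T;L^2(0,L))\cap L^2(0,T;H^1(0,L))$ together with the one--dimensional embedding $H^1(0,L)\hookrightarrow L^\infty(0,L)$, for $u,v\in\ZZ_T$ one estimates $\norm{(uv)_x}_{L^1(0,T;L^2)}\le\norm{u_xv}_{L^1(0,T;L^2)}+\norm{uv_x}_{L^1(0,T;L^2)}\le C\norm{u}_{\ZZ_T}\norm{v}_{\ZZ_T}$, and likewise for $uu_x$ and $vv_x$; hence $\norm{F(u,v)}_{L^1(0,T;\X)}\le C\norm{(u,v)}_{\ZZ_T}^2$ and, by polarization, $\norm{F(u,v)-F(\tilde u,\tilde v)}_{L^1(0,T;\X)}\le C\big(\norm{(u,v)}_{\ZZ_T}+\norm{(\tilde u,\tilde v)}_{\ZZ_T}\big)\norm{(u,v)-(\tilde u,\tilde v)}_{\ZZ_T}$. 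Defining $\Phi:\ZZ_T\to\ZZ_T$ by $\Phi(u,v)=\Lambda\big((u^0,v^0),(u^1,v^1),F(u,v)\big)$, a fixed point of $\Phi$ is exactly a solution of the nonlinear controllability problem. Combining the two displayed bounds gives $\norm{\Phi(u,v)}_{\ZZ_T}\le C\delta+C\norm{(u,v)}_{\ZZ_T}^2$ and a matching Lipschitz estimate for differences; restricting $\Phi$ to the ball $\set{(u,v)\in\ZZ_T:\norm{(u,v)}_{\ZZ_T}\le R}$ and choosing $R$ and then $\delta$ small (for instance $R=2C\delta$ with $4C^2\delta<1$) makes $\Phi$ a contraction of that ball into itself, so the Banach fixed point theorem yields the unique controlled solution $(u,v)\in\ZZ_T$.

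I expect the main obstacle to be not the fixed point step, which is routine once the ingredients are assembled, but the construction of the source--dependent control operator $\Lambda$: one must check that the exact controllability proved for the forcing--free linear system survives the addition of an $L^1(0,T;\X)$ source and that the controls still lie in $\HH_T$ with the stated bound, which rests on the linear well-posedness and the sharp trace estimates of Proposition \ref{hiddenregularities} (and, in case (iii), on the smallness condition, which is exactly what keeps the corresponding control constant finite). Some care is also required because several controls have low temporal regularity, living in $H^{-\frac13}(0,T)$, so the duality and observability argument producing $\Lambda$ must be set in the correct dual spaces; placing the nonlinear source in the comparatively regular space $L^1(0,T;\X)$ keeps it from interfering with this low boundary regularity.
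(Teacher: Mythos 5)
Your proposal is correct and follows essentially the same route as the paper: the paper builds your source-dependent control operator explicitly, by applying the data-to-control map $\Lambda_1$ of the linear theory to the corrected target $(u^1,v^1)+(\upsilon,\nu(T,u,v))$, where $(\upsilon,\nu(T,u,v))$ is the time-$T$ value of the Duhamel integral of the nonlinearity, and then subtracting that Duhamel term; the bilinear estimate you invoke is exactly Lemma \ref{lem3}, and the contraction on a ball of radius $r=2C\delta$ is carried out the same way. The only difference is presentational: you phrase the linear step abstractly as an operator $\Lambda$ on $\X\times\X\times L^1(0,T;\X)$, while the paper realizes it concretely through the target-correction trick, which is precisely the verification you flag as the ``main obstacle.''
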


Before close this section, we observe that the exact controllability result given in Theorem A holds without any restriction of the Length $L$. However, we believe that, with another configuration of the controls, it is possible to prove the existence of a critical set for the system \eqref{gg1}.

\medskip

The article is organized as follows:

\medskip

----  In Section \ref{Sec2},  we show that  the system \eqref{gg1}-\eqref{gg2} is locally well-posed in $\ZZ_T$,  whenever  $(u^0, v^0) \in(L^2 (0,L))^2$, $h_0, \ g_0\in H^{-\frac13} (\mathbb{R}^+), \ h_1, \ g_1 \in L^2 (\mathbb{R}^+)$ and $h_2, \ g_2\in H^{-\frac13} (\mathbb{R}^+)$.  Various linear estimates, including hidden regularities,  are  presented for solutions of the corresponding linear system. As we pointed before, such estimates will play important roles in studying the controllability properties.

\medskip

---- In Section \ref{Sec3},  the boundary control system \eqref{gg1} is investigated for its controllability.  We investigate  first the linearized system
and its corresponding adjoint system for their controllability and observability. In particular,  the hidden regularities for the solutions of the adjoint system  presented in the Section \ref{Sec2} are used to prove observability inequalities associated to the control problem.

\medskip

---- Finally, the proof of our main result, Theorem \ref{main_theo}, is presented in Section \ref{Sec4}.

\section{Well-posedness}\label{Sec2}
\subsection{Linear System}
In this section, we establish the well-posedness of the linear system associated to (\ref{gg1})-(\ref{gg2}):
\begin{equation}\label{gglin}
\left\lbrace \begin{tabular}{l l}
$u_t + u_{xxx} + av_{xxx}  =0$, & in $(0,L)\times (0,T)$,\\
$v_t +\frac{r}{c}v_x+\frac{ab}{c}u_{xxx} + \frac{1}{c}v_{xxx} =0$, & in $(0,L)\times (0,T)$,\\
$u_{xx}(0,t) = h_0(t),\,\,u_x(L,t) = h_1(t),\,\,u_{xx}(L,t) = h_2(t)$,& in $(0,T)$,\\
$v_{xx}(0,t) =g_0(t),\,\,v_x(L,t) = g_1(t),\,\, v_{xx}(L,t) = g_2(t)$,& in $(0,T)$,\\
$u(x,0)=u^0(x), \quad v(x,0) = v^0(x)$, & in $(0,L)$.
\end{tabular}\right.
\end{equation}
We begin by considering the following linear non-homogeneous boundary value problem
\begin{equation}\label{gglin1}
\left\lbrace \begin{tabular}{l l}
$u_t + u_{xxx} + av_{xxx}  =f$, & in $(0,L)\times (0,T)$,\\
$v_t +\frac{ab}{c}u_{xxx} + \frac{1}{c}v_{xxx} =s$, & in $(0,L)\times (0,T)$,\\
$u_{xx}(0,t) = h_0(t),\,\,u_x(L,t) = h_1(t),\,\,u_{xx}(L,t) = h_2(t)$,& in $(0,T)$,\\
$v_{xx}(0,t) =g_0(t),\,\,v_x(L,t) = g_1(t),\,\, v_{xx}(L,t) = g_2(t)$,& in $(0,T)$,\\
$u(x,0)=u^0(x), \quad v(x,0) = v^0(x)$, & in $(0,L)$,
\end{tabular}\right.
\end{equation}
 with the notation introduced in Section 1. Then next proposition shows that the, problem (\ref{gglin1}) is well-posed in the space $\X$.
\begin{prop}\label{prop1}
Let $T>0$ be given. Then, for any $(u^0,v^0)$ in $\X$, $f,s$ in $L^1(0,T;L^2(0,L))$ and $\overrightarrow{h},\overrightarrow{g}\in \HH_T$, problem (\ref{gglin1}) admits a unique solution $(u,v) \in \ZZ_T$, with
\begin{equation}\label{hr1}
\partial_x^k u,\partial_x^k v \in L^{\infty}_x(0,L;H^{\frac{1-k}{3}}(0,T)),  \quad k=0,1,2.
\end{equation}
Moreover, there exist $C>0$, such that
\begin{multline*}
\|(u,v)\|_{\ZZ_T}+\sum_{k=0}^{2}\|(u,v)\|_{L^{\infty}_x(0,L;(H^{\frac{1-k}{3}}(0,T))^2)}\leq C\left\lbrace \|(u^0,v^0)\|_{\X} \right. \\
\left.+\|(\overrightarrow{h},\overrightarrow{g})\|_{\HH_T}+\|(f,s)\|_{L^1(0,T;(L^2(0,L))^2)} \right\rbrace.
\end{multline*}
\end{prop}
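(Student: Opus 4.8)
The plan is to decouple \eqref{gglin1} by diagonalizing its dispersion matrix, reducing everything to two independent scalar Neumann problems for which the well-posedness and sharp trace theory of \cite{caicedo_caspistrano_zhang_2015} (itself built on the Kenig--Ponce--Vega estimates \cite{KePoVe}) is available, and then to transport the resulting bounds back to $(u,v)$ by a purely algebraic argument.

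First I would recast \eqref{gglin1} in vector form as $\partial_t U + M\,\partial_x^3 U = F$, with $U=(u,v)^{\mathrm T}$, $F=(f,s)^{\mathrm T}$ and
\[
M=\begin{pmatrix} 1 & a\\ ab/c & 1/c\end{pmatrix}.
\]
The characteristic polynomial of $M$ is $\lambda^2-(1+\tfrac1c)\lambda+\tfrac{1-a^2b}{c}$, with discriminant $(1-\tfrac1c)^2+\tfrac{4a^2b}{c}>0$, product of roots $\tfrac{1-a^2b}{c}>0$, and sum $1+\tfrac1c>0$; hence, precisely because of the standing hypothesis \eqref{coef}, $M$ has two distinct positive eigenvalues $\lambda_1,\lambda_2$ and is diagonalizable as $M=PDP^{-1}$, $D=\mathrm{diag}(\lambda_1,\lambda_2)$. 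Introducing $W=(w_1,w_2)^{\mathrm T}:=P^{-1}U$ turns the interior equations into the decoupled pair $\partial_t w_i+\lambda_i\partial_x^3 w_i=(P^{-1}F)_i$; since $\partial_x^k W=P^{-1}\partial_x^k U$, the Neumann conditions decouple as well, the data $(h_0,g_0)$, $(h_1,g_1)$, $(h_2,g_2)$ being mapped by the fixed invertible matrix $P^{-1}$ into boundary data for $w_1,w_2$ lying in the same spaces, with equivalent norms. Thus proving the proposition is equivalent to proving its scalar counterpart for each equation $\partial_t w+\lambda\partial_x^3 w=g$, $\lambda>0$ (the coefficient $\lambda$ being removable by a harmless time rescaling), and then recombining through $U=PW$.

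For the scalar problem I would decompose $w=w^{\mathrm{lin}}+w^{\mathrm{src}}+w^{\mathrm{bd}}$, solving respectively the homogeneous equation with datum $w^0$, the inhomogeneous equation with source $g$ and zero initial/boundary data, and the boundary-forced equation with zero source and datum. The part $w^{\mathrm{lin}}$ is controlled by the $C_0$-semigroup generated by $-\lambda\partial_x^3$ on the homogeneous Neumann domain, which yields the $C([0,T];L^2)\cap L^2(0,T;H^1)$ bound together with the Kato trace smoothing; the part $w^{\mathrm{src}}$ follows from it by Duhamel's formula and an estimate mapping $L^1(0,T;L^2)$ into $C([0,T];L^2)\cap L^2(0,T;H^1)$; and $w^{\mathrm{bd}}$ is produced from the explicit boundary integral operators. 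Each piece is shown to satisfy the sharp trace bound \eqref{hr1}, the proof resting on the Airy smoothing estimates of \cite{KePoVe} exactly as in the analysis underlying Theorem F. Summing the three contributions gives the scalar estimate with $\|w^0\|_{L^2}$, the three boundary norms, and $\|g\|_{L^1(0,T;L^2)}$ on the right; applying $U=PW$ and absorbing the factor $\|P\|\,\|P^{-1}\|$ then yields the stated inequality, while existence and uniqueness for $U$ follow from those for the two scalar problems via the bijection $U\mapsto W$.

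The principal obstacle is the hidden-regularity claim \eqref{hr1} for the boundary-forced piece $w^{\mathrm{bd}}$: showing that the traces $\partial_x^k w$, $k=0,1,2$, lie in $H^{(1-k)/3}(0,T)$ uniformly in $x\in(0,L)$ demands a careful estimate of the oscillatory boundary integral operators and of the fractional-in-time gain dictated by the dispersion, and it is here that the sharp Kato smoothing is indispensable. By contrast, the bounds for $w^{\mathrm{lin}}$ and $w^{\mathrm{src}}$ are standard once the scalar smoothing theory is in hand, and the diagonalization packaging is elementary linear algebra.
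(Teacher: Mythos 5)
Your proposal is correct and follows essentially the same route as the paper: the paper performs exactly this diagonalization (your matrix $P$ is, up to normalization, the explicit change of variables $u=2a\widetilde{u}+2a\widetilde{v}$, $v=((\tfrac1c-1)\pm\lambda)\,\widetilde{u},\widetilde{v}$ used there), reduces to two decoupled scalar Neumann KdV problems, and invokes the well-posedness and sharp trace estimates of \cite{caicedo_caspistrano_zhang_2015} together with the boundary integral operators of Lemma \ref{l4-d} before transporting the bounds back. The only cosmetic difference is that you propose to re-derive the scalar semigroup/Duhamel/boundary-forcing decomposition, whereas the paper simply cites it.
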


\begin{proof}
We diagonalize the main term in \eqref{gglin} and consider the change of variable
\begin{equation*}
\left\lbrace\begin{tabular}{l}
$u = 2a \widetilde{u} +2a \widetilde{v}$, \\
$v = \left(\left(\frac{1}{c}-1\right)+\lambda\right) \widetilde{u}+ \left(\left(\frac{1}{c}-1\right)-\lambda\right) \widetilde{v}$,
\end{tabular}\right.
\end{equation*}
where $\lambda=\sqrt{\left(\frac{1}{c}-1\right)^2+\frac{4a^2b}{c}}$. Thus, we can transform the
linear system (\ref{gglin1}) into
\begin{equation}\label{kdvln1}
\left\{
\begin{array}{ll}\vspace{2mm}
\widetilde{u}_t + \alpha_{-}\widetilde{u}_{xxx} =\widetilde{f},\\
 \widetilde{v}_t +\alpha_{+}\widetilde{v}_{xxx}=\widetilde{s}, \vspace{2mm} \\
\widetilde{u}_{xx}(0,t) = \widetilde{h}_0(t), \  \widetilde{u}_x(L,t) = \widetilde{h}_1(t),  \  \widetilde{u}_{xx}(L,t)  = \bar{h}_2(t) ,\vspace{2mm} \\
\widetilde{v}_{xx}(0,t) = \widetilde{g}_0(t), \ \widetilde{v}_x(L,t)  =  \widetilde{g}_1(t), \ \widetilde{v}_{xx}(L,t)  = \widetilde{g}_2(t) ,\vspace{2mm} \\
 \widetilde{u}(x,0)= \widetilde{u}^0(x), \quad \widetilde{v}(x,0)  = \widetilde{v}^0(x),
\end{array}
\right.
\end{equation}
where $\alpha_{\pm} = -\frac{1}{2}\left(\left(\frac{1}{c}-1\right)\pm \lambda\right)$ and
\begin{equation*}
\left\lbrace\begin{tabular}{l l l l}
$\widetilde{f}=-\frac{1}{2}\left(\frac{\alpha_+}{a\lambda}f+\frac{1}{\lambda}s\right)$, & $\widetilde{u}_0=-\frac{1}{2}\left(\frac{\alpha_{-}}{a\lambda}u^0-\frac{1}{\lambda}v^0\right)$, & $\widetilde{h}_i=-\frac{1}{2}\left(\frac{\alpha_{-}}{a\lambda}h_i-\frac{1}{\lambda}g_i\right),$ & $i=0,1,2,$ \\
\\
$\widetilde{s}=-\frac{1}{2}\left(\frac{\alpha_{-}}{a\lambda}f-\frac{1}{\lambda}s\right),$ & $\widetilde{v}_0=\frac{1}{2}\left(\frac{\alpha_{+}}{a\lambda}u^0-\frac{1}{\lambda}v^0\right)$, & $\widetilde{g}_i=\frac{1}{2}\left(\frac{\alpha_{+}}{a\lambda}h_i-\frac{1}{\lambda}g_i\right),$  & $i=0,1,2.$
\end{tabular}\right.
\end{equation*}
Note that condition \eqref{coef} guarantees that $\alpha_{\pm}$ are nonzero. Therefore, system \eqref{kdvln1} can be decoupled into two single KdV equations as follows:
\begin{equation}
\left\{
\begin{array}{ll}
\widetilde{u}_t + \alpha_{-}\widetilde{u}_{xxx} =\widetilde{f},\vspace{2mm}\\
\widetilde{u}_{xx}(0,t) = \widetilde{h}_0(t), \  \widetilde{u}_x(L,t) = \widetilde{h}_1(t), \ \widetilde{u}_{xx}(L,t)  = \widetilde{h}_2(t),\vspace{2mm}\\
\widetilde{u}(0,x)= \widetilde{u}^0(x)
\end{array}
\right.
\label{kdv_ne_1}
\end{equation}
and
\begin{equation}
\left\{
\begin{array}{ll}
\widetilde{v}_t +\alpha_{+}\widetilde{v}_{xxx}=\widetilde{s},\vspace{2mm}  \\
\widetilde{v}_{xx}(0,t) = \widetilde{g}_0(t), \ \widetilde{v}_x(L,t)  =  \widetilde{g}_1(t), \ \widetilde{v}_{xx}(L,t)  = \widetilde{g}_2(t), \vspace{2mm}\\
\widetilde{v}(x,0)  = \widetilde{v}^0(x).
\end{array}
\right.
\label{kdv_ne_2}
\end{equation}
Here, we consider the solutions written on the form $\{W_{bdr}^{\pm}(t)\}_{t\geq 0}$ that will be called \textit{the boundary integral operator}. For this purpose we use a Lemma, which can be found in \cite[Lemma 2.4]{CaZhaSun}, for solutions of \eqref{kdv_ne_1} (or \eqref{kdv_ne_2}):
\begin{lemma}\label{l4-d}
The solution $u$ of the IBVP \eqref{kdv_ne_1} (or \eqref{kdv_ne_2}) can be written in the form
\[
u(x,t)=[{W}^{+}_{bdr}\vec{\widetilde{h}}](x,t):=[{W}^{+}_{bdr}\vec{h}](x,t):= \sum_{j,m=1}^3
[W^{+}_{j,m}h_{m}](x,t),
\]
where
\begin{equation}
 [W^{+}_{j,m}h](x,t) \equiv [U_{j,m} h](x,t) +\overline{[U_{j,m}h](x,t)}\label{2.2-4}
\end{equation}
with
\begin{equation} [U_{j,m} h](x,t)\equiv \frac{1}{2\pi } \int ^{+\infty }_{0} e^{i\rho ^3
t} e^{\lambda ^+_j (\rho ) x}   3\rho ^2
[Q_{j,m}^{+}h](\rho )d\rho \label{2.3-4}
\end{equation}
for $j=1,3, \ m=1,2,3 $ and
\begin{equation}
 [U_{2,m} h](x,t) \equiv  \frac{1}{2\pi } \int ^{+\infty }_{0} e^{i \rho ^3
 t} e^{-\lambda ^+_2 (\rho ) (1-x)}    3\rho ^2
[Q_{2,m}^{+}h](\rho )d\rho \label{2.4-4}
\end{equation}
for $m=1,2,3$.
Here
\begin{equation}\label{2.5-4} [Q_{j,m}
^{+}h] (\rho ):=\frac{\Delta ^{+}_{j,m} (\rho ) } {\Delta
^{+} (\rho )} \hat{h}   ^+ (\rho ), \qquad [Q_{2,m}^{+}h]
(\rho )=\frac{\Delta ^{+}_{2,m} (\rho ) } {\Delta ^{+} (\rho
)} e^{\lambda ^+_2 (\rho )} \hat{h} ^+ (\rho )\end{equation} for
$j=1,3 $ and $m=1,2,3$. Here $\hat{h}^+ (\rho) = \hat{h} (i\rho
^3)$, $\Delta
    ^{+}(\rho)$ and $\Delta ^{+}_{j,m} (\rho)$ are obtained from
    $\Delta 
    (s)$ and $\Delta _{j,m} (s)$ by replacing $s$ with $i \rho
    ^3  $ and  $\lambda _j ^+ (\rho)=\lambda _j (i \rho
    ^3 )$ where
    \[ \Delta = \lambda _1\lambda _2 \lambda _3\left (\lambda _1(\lambda _3-\lambda _2) e^{-\lambda _1}
    +\lambda _2(\lambda _1-\lambda _3) e^{-\lambda _2}+\lambda _3(\lambda _2-\lambda _1) e^{-\lambda
    _3}\right );\]
    \[ \Delta _{1,1} = e^{-\lambda _1}\lambda _2 \lambda _3(\lambda _3-\lambda
    _2), \  \Delta _{2,1} = e^{-\lambda _2}\lambda _1 \lambda _3(\lambda
    _1-\lambda _3), \  \Delta _{3,1} = e^{-\lambda _3}\lambda _1 \lambda _2(\lambda
    _2-\lambda _1);\]
    \[ \Delta _{1,2} = \lambda ^2_2 \lambda _3^2 (e^{\lambda _2} -
    e^{\lambda _3}), \ \Delta _{2,2} = \lambda _1^2\lambda ^2_3( e^{\lambda _3} -
    e^{\lambda _1}), \ \Delta _{3,2} = \lambda_1 ^2\lambda _2^2 ( e^{\lambda _1} -
    e^{\lambda _2});\]
    \[ \Delta _{1,3} = \lambda _2\lambda _3 (\lambda _2e^{\lambda _3}-\lambda
    _3
    e^{\lambda _2}), \ \Delta _{2,3} = \lambda _1 \lambda _3 (\lambda_3e^{\lambda _1}-\lambda
    _1
    e^{\lambda _3}), \ \Delta _{3,3} = \lambda _1 \lambda _2 (\lambda _1e^{\lambda _2}-\lambda
    _2
    e^{\lambda _1}).\]
\end{lemma}
\medskip

Since
\begin{equation*}
(\widetilde{u}^0, \widetilde{v}^0) \in \X,\quad (\widetilde{f},\widetilde{s}) \in L^1(0,T;(L^2(0,L))^2)\, \mbox{ and }\, \overrightarrow{\widetilde{h}}, \overrightarrow{\widetilde{g}} \in \HH_T,
\end{equation*}
by \cite[Proposition 2.5]{caicedo_caspistrano_zhang_2015}, we obtain the existence of $(\widetilde{u},\widetilde{v})\in \ZZ_T$, solution of the system \eqref{kdvln1}, such that
\begin{equation*}
\partial_x^k \widetilde{u},\partial_x^k \widetilde{v} \in L^{\infty}_x(0,L;H^{\frac{1-k}{3}}(0,T)),  \quad k=0,1,2,
\end{equation*}
and
\begin{multline*}
\|(\widetilde{u},\widetilde{v})\|_{\ZZ_T}+\sum_{k=0}^{2}\|(\widetilde{u},\widetilde{v})\|_{L^{\infty}_x(0,L;(H^{\frac{1-k}{3}}(0,T))^2)}\leq C\left\lbrace \|(\widetilde{u}^0,\widetilde{v}^0)\|_{\X}+\|(\overrightarrow{\widetilde{h}},\overrightarrow{\widetilde{g}})\|_{\HH_T}\right. \\
\left.+\|(\widetilde{f},\widetilde{s})\|_{L^1(0,T;(L^2(0,L))^2)} \right\rbrace,
\end{multline*}
for some constant $C>0$. Furthermore, we can write $\widetilde{u}$ and $\widetilde{v}$ in its integral form as follows
\begin{equation*}
\widetilde{u}(t)=W_0^{-}(t)\widetilde{u}^0+W_{bdr}^{-}(t)\overrightarrow{\widetilde{h}} + \int_0^tW_0^{-}(t-\tau)\widetilde{f}(\tau)d\tau,
\end{equation*}
\begin{equation*}
\widetilde{v}(t)=W_0^{+}(t)\widetilde{v}^0+W_{bdr}^{+}(t)\overrightarrow{\widetilde{g}} + \int_0^tW_0^{+}(t-\tau)\widetilde{s}(\tau)d\tau,
\end{equation*}
where $\{W_0^{\pm}(t)\}_{t\geq 0}$ are the $C_0$-semigroup in the space $L^2(0,L)$ generated by the linear operators $$A^{\pm}=-\alpha_{\pm}g''',$$ with domain $$D(A^{\pm})=\{g \in H^3(0,L): g''(0)=g'(L)=g''(L)=0\},$$
and $\{W_{bdr}^{\pm}(t)\}_{t\geq 0}$ are the operator given in Lemma \ref{l4-d} (see also \cite[Lemma 2.1]{caicedo_caspistrano_zhang_2015} for more details). Then, by change of variable we can easily verify that
\begin{equation*}
\begin{cases}
u(t)=W_0^{-}(t)u^0+W_{bdr}^{-}(t)\overrightarrow{h} + \displaystyle\int_0^tW_0^{-}(t-\tau)f(\tau)d\tau, \\
v(t)=W_0^{+}(t)v^0+W_{bdr}^{+}(t)\overrightarrow{g} + \displaystyle\int_0^tW_0^{+}(t-\tau)s(\tau)d\tau
\end{cases}
\end{equation*}
and the result follows.
\end{proof}

The global well-posedness of the  system (\ref{gglin}) is obtained using a fixed point argument.
\begin{prop}\label{prop2}
Let $T>0$ be given. Then, for any $(u^0,v^0) \in \X$ and $\overrightarrow{h}, \overrightarrow{g}\in \HH_T$, problem (\ref{gglin}) admits a unique solution
$(u,v) \in \ZZ_T$
with
\begin{equation*}
\partial_x^k u,\partial_x^k v \in L^{\infty}_x(0,L;H^{\frac{1-k}{3}}(0,T)),  \quad k=0,1,2.
\end{equation*}
Moreover, there exist $C>0$, such that
\begin{multline*}
\|(u,v)\|_{\ZZ_T}+\sum_{k=0}^{2}\|(u,v)\|_{L^{\infty}_x(0,L;(H^{\frac{1-k}{3}}(0,T))^2)}\leq C\left\lbrace \|(u^0,v^0)\|_{\X}+\|(\overrightarrow{h},\overrightarrow{g})\|_{\HH_T} \right.\\
\left.+\|(f,s)\|_{L^1(0,T;(L^2(0,L))^2)} \right\rbrace.
\end{multline*}
\end{prop}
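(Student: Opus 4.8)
The plan is to treat the drift term $\frac{r}{c}v_x$, which is the only difference between the full linear system \eqref{gglin} and the non-homogeneous system \eqref{gglin1} already solved in Proposition \ref{prop1}, as a forcing term and to recover the solution of \eqref{gglin} by a contraction argument. Concretely, a pair $(u,v)\in\ZZ_T$ solves \eqref{gglin} with data $(u^0,v^0)\in\X$ and $\vec{h},\vec{g}\in\HH_T$ if and only if it solves \eqref{gglin1} with the same data and source terms $f\equiv 0$, $s:=-\frac{r}{c}v_x$. Since $v\in L^2(0,T;H^1(0,L))$ whenever $(u,v)\in\ZZ_T$, we have $v_x\in L^2(0,T;L^2(0,L))\hookrightarrow L^1(0,T;L^2(0,L))$ on the finite interval $(0,T)$, so this right-hand side is admissible for Proposition \ref{prop1}.

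First I would fix a short time $T_0\in(0,T]$ and define $\Gamma\colon\ZZ_{T_0}\to\ZZ_{T_0}$ by letting $\Gamma(w,z)$ be the solution furnished by Proposition \ref{prop1} of \eqref{gglin1} with data $(u^0,v^0)$, $\vec{h}$, $\vec{g}$ and source $(f,s)=(0,-\frac{r}{c}z_x)$; a fixed point of $\Gamma$ is exactly a solution of \eqref{gglin} on $[0,T_0]$. By linearity of the solution operator, the difference of two outputs solves \eqref{gglin1} with zero initial and boundary data and source $(0,-\frac{r}{c}(z_1-z_2)_x)$, so the estimate of Proposition \ref{prop1} together with Hölder's inequality in time gives
\[
\|\Gamma(w_1,z_1)-\Gamma(w_2,z_2)\|_{\ZZ_{T_0}}\le C\,\frac{r}{c}\,\|(z_1-z_2)_x\|_{L^1(0,T_0;L^2(0,L))}\le C\,\frac{r}{c}\,\sqrt{T_0}\,\|(w_1,z_1)-(w_2,z_2)\|_{\ZZ_{T_0}}.
\]
Choosing $T_0$ so small that $C\frac{r}{c}\sqrt{T_0}\le\frac12$ makes $\Gamma$ a contraction, and the Banach fixed point theorem yields a unique $(u,v)\in\ZZ_{T_0}$. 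The regularity \eqref{hr1} is inherited from Proposition \ref{prop1}, and in the accompanying norm bound the source contribution $\|{-}\frac{r}{c}v_x\|_{L^1(0,T_0;L^2)}\le C\frac{r}{c}\sqrt{T_0}\|(u,v)\|_{\ZZ_{T_0}}$ is absorbed into the left-hand side by the same smallness of $T_0$, leaving an estimate in terms of $(u^0,v^0)$ and $(\vec{h},\vec{g})$ alone.

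Finally I would remove the restriction $T_0<T$. Since the local solution belongs to $C([0,T_0];\X)$, its value $(u(T_0),v(T_0))$ lies in $\X$ and serves as initial datum for the same problem on $[T_0,2T_0]$ with the controls restricted to that interval; iterating, finitely many steps of length $T_0$ cover $[0,T]$, and gluing the pieces produces the unique global solution. Summing the finitely many local estimates (each datum bounded by the $\ZZ$-norm on the preceding subinterval) yields the stated global bound.

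The step requiring the most care is this continuation from $[0,T_0]$ to $[0,T]$ while retaining the hidden trace regularity \eqref{hr1}. On the one hand, the local time $T_0$ must be chosen uniformly in the starting point of each subinterval, which holds once the constant $C$ of Proposition \ref{prop1} is taken nondecreasing in the time horizon. On the other hand, one must concatenate the anisotropic norms $L^\infty_x(0,L;H^{\frac{1-k}{3}}(0,T))$ across junction points; here the temporal orders $\frac{1-k}{3}\le\frac13<\frac12$ require no matching condition at the junctions, so the traces on consecutive subintervals glue into functions on $(0,T)$ with comparable norm, making this a bookkeeping rather than an analytic difficulty.
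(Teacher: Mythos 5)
Your proposal is correct and follows essentially the same route as the paper: the drift term $\frac{r}{c}v_x$ is treated as a source for the non-homogeneous problem \eqref{gglin1}, Proposition \ref{prop1} together with the bound $\|v_x\|_{L^1(0,T_0;L^2(0,L))}\le T_0^{1/2}\|(u,v)\|_{\ZZ_{T_0}}$ gives a contraction on a short time interval, and the solution is extended to $[0,T]$ by iteration since the local existence time does not depend on the data. Your extra remarks on gluing the anisotropic trace norms (using that the Sobolev exponents are below $\tfrac12$) only make explicit what the paper dismisses as the ``standard continuation extension argument.''
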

\begin{proof}
Let $\FF_T:= \left\lbrace (u,v) \in \ZZ_T: (u,v) \in  L^{\infty}_x(0,L;(H^{\frac{1-k}{3}}(0,T))^2), k=0,1,2\right\rbrace$ equipped with the norm
\[
\|(u,v)\|_{\FF_T} = \|(u,v)\|_{\ZZ_T}+\sum_{k=0}^{2}\|(u,v)\|_{L^{\infty}_x(0,L;(H^{\frac{1-k}{3}}(0,T))^2)}.
\]
Let $0< \beta \leq T$ to be determined later. For each $u,v \in \FF_{\beta}$, consider the problem
\begin{equation}\label{e1}
\left\lbrace \begin{tabular}{l l}
$\omega_t + \omega_{xxx} + a\eta_{xxx}  =0$, & in $(0,L)\times (0,\beta)$,\\
$\eta_t +\frac{ab}{c}\omega_{xxx} + \frac{1}{c}\eta_{xxx} =-\frac{r}{c}v_x$, & in $(0,L)\times (0,\beta)$,\\
$\omega_{xx}(0,t) = h_0(t),\,\,\omega_x(L,t) = h_1(t),\,\,\omega_{xx}(L,t) = h_2(t)$,& in $(0,\beta)$,\\
$\eta_{xx}(0,t) =g_0(t),\,\,\eta_x(L,t) = g_1(t),\,\, \eta_{xx}(L,t) = g_2(t)$,& in $(0,\beta)$,\\
$\omega(x,0)=u^0(x), \quad v(x,0) = v^0(x)$, & in $(0,L)$.
\end{tabular}\right.
\end{equation}
According to Proposition \ref{prop1}, we can define the operator
\[
\Gamma: \FF_{\beta} \rightarrow \FF_{\beta}, \quad \text{given by} \quad \Gamma(u,v)=(\omega,\eta),
\]
where $(\omega,\eta)$ is the solution of (\ref{e1}). Moreover,
\begin{equation}\label{hr3}
\|\Gamma(u,v)\|_{\FF_\beta}\leq C\left\lbrace \|(u^0,v^0)\|_{\X}+\|(\overrightarrow{h},\overrightarrow{g})\|_{\HH_\beta}+\|(0,v_x)\|_{L^1(0,\beta;(L^2(0,L))^2)} \right\rbrace,
\end{equation}
where the positive  constant $C$ depends only on $T$. Since
$$ \|(0,v_x)\|_{L^1(0,\beta;L^2(0,L))}\leq \beta^{\frac{1}{2}}\|(u,v)\|_{\FF_\beta},$$
we obtain a positive constant $C>0$, such that
\begin{equation}\label{e2}
\|\Gamma(u,v)\|_{\FF_\beta}\leq C\left\lbrace \|(u^0,v^0)\|_{\X}+\|(\overrightarrow{h},\overrightarrow{g})\|_{\HH_\beta}\right\rbrace+C\beta^{\frac{1}{2}}\|(u,v)\|_{\FF_\beta}.
\end{equation}
Let $(u,v) \in B_{r}(0):=\left\lbrace (u,v) \in \FF_{\beta}: \|(u,v)\|_{\FF_{\beta}}\leq r\right\rbrace$, with $r=2C\left\lbrace \|(u^0,v^0)\|_{\X}+\|(\overrightarrow{h},\overrightarrow{g})\|_{\HH_\beta}\right\rbrace$.
Choosing $\beta>0$, satisfying
\begin{equation}\label{beta}
C\beta^{\frac{1}{2}}\leq \frac{1}{2},
\end{equation}
from (\ref{e2}) we obtain
$$\|\Gamma(u,v)\|_{\FF_\beta}\leq r.$$
The above estimate allows us to conclude that
\[
\Gamma: B_r(0)\subset\FF_{\beta} \rightarrow B_r(0).
\]
On the other hand, note that $\Gamma(u_1,v_1)-\Gamma(u_2,v_2)$ solves the following system
\begin{equation*}
\left\lbrace \begin{tabular}{l l}
$\omega_t + \omega_{xxx} + a\eta_{xxx}  =0$, & in $(0,L)\times (0,\beta)$,\\
$\eta_t +\frac{ab}{c}\omega_{xxx} + \frac{1}{c}\eta_{xxx} =-\frac{r}{c}(v_{1x}-v_{2x}) $, & in $(0,L)\times (0,\beta)$,\\
$\omega_{xx}(0,t) = \omega_x(L,t) = \omega_{xx}(L,t) = 0$,& in $(0,\beta)$,\\
$\eta_{xx}(0,t)=\eta_x(L,t) = \eta_{xx}(L,t) = 0$,& in $(0,\beta)$,\\
$\omega(x,0)=0, \quad v(x,0) = 0$, & in $(0,L)$.
\end{tabular}\right.
\end{equation*}
Again, from Proposition \ref{prop1} and \eqref{beta}, we have
\begin{equation*}
\begin{array}{l}
\|\Gamma(u_1,v_1)-\Gamma(u_2,v_2)\|_{\FF_\beta}\leq C\|(0,v_{1x}-v_{2x})\|_{L^1(0,\beta;(L^2(0,L))^2)}\leq C\beta^{\frac{1}{2}}\|(u_1,v_1)-(u_2,v_2)\|_{\FF_{\beta}}\\
\qquad\qquad\qquad\qquad\qquad\quad\leq \displaystyle\frac{1}{2}\|(u_1,v_1)-(u_2,v_2)\|_{\FF_{\beta}}.\nonumber
\end{array}
\end{equation*}
Hence, $\Gamma: B_r(0) \rightarrow B_r(0)$ is a contraction and, by Banach fixed point theorem, we obtain a unique $(u,v) \in B_r(0)$, such that $$\Gamma(u,v) = (u,v) \in \FF_{\beta},$$
 and \eqref{hr3} holds, for all $t \in (0,\beta)$. Since the choice of $\beta$ is independent of $(u^0,v^0)$, the standard continuation extension argument yields that the solution $(u,v)$ belongs to $\FF_T$. The proof is complete.\end{proof}
\subsubsection{Adjoint System}
Consider the following homogeneous initial-value problem associated to (\ref{gg1})-(\ref{gg2}):
\begin{equation}\label{gglin3}
\left\lbrace \begin{tabular}{l l}
$u_t + u_{xxx} + av_{xxx}  =0$, & in $(0,L)\times (0,T)$,\\
$v_t +\frac{r}{c}v_x+\frac{ab}{c}u_{xxx} + \frac{1}{c}v_{xxx} =0$, & in $(0,L)\times (0,T)$,\\
$u_{xx}(0,t)=u_x(L,t)=u_{xx}(L,t) = 0$,& in $(0,T)$,\\
$v_{xx}(0,t) =v_x(L,t) =  v_{xx}(L,t) = 0$,& in $(0,T)$,\\
$u(x,0)=u^0(x), \quad v(x,0) = v^0(x)$, & in $(0,L)$.
\end{tabular}\right.
\end{equation}
 In order to introduce the backward system associated to \eqref{gglin3}, we multiply the first equation of \eqref{gglin3} by $\varphi$, the second one by $\psi$ and integrate over $(0,L)\times (0,T)$. Assuming that the functions $u, v, \varphi$ and $\psi$ are regular enough to justify all the computations, we obtain, after integration by parts, the following identity:
\begin{align*}\label{ad1}
 \int_0^L&\left( u(x,T)\varphi(x,T)+v(x,T)\psi(x,T)\right)dx-\int_0^L\left(u^0(x)\varphi(x,0)+v^0(x)\psi(x,0)\right) dx =  \notag \\
 &\int_0^T\int_0^L u(x,t)\left(\varphi(x,t) + \varphi_{xxx}(x,t) + \frac{ab}{c}\psi_{xxx}(x,t) \right)dxdt \notag \\
&+\int_0^T\int_0^Lv(x,t)\left( \psi(x,t) +\frac{r}{c}\psi(x,t)+a\varphi_{xxx}(x,t) + \frac{1}{c}\psi_{xxx}(x,t)\right)dxdt  \notag\\
&-\int_0^T u_{xx}(L,t)\left(\varphi(L,t)+\frac{ab}{c}\psi(L,t)\right)dt+\int_0^T u_{xx}(0,t)\left(\varphi(0,t)+\frac{ab}{c}\psi(0,t)\right)dt \notag\\
&+\int_0^T u_x(L,t)\left(\varphi_x(L,t)+\frac{ab}{c}\psi_x(L,t)\right)dt-\int_0^T u_x(0,t)\left(\varphi_x(0,t)+\frac{ab}{c}\psi_x(0,t)\right)dt \notag \\
&-\int_0^T u(L,t)\left(\varphi_{xx}(L,t)+\frac{ab}{c}\psi_{xx}(L,t)\right)dt+\int_0^T u(0,t)\left(\varphi_{xx}(0,t)+\frac{ab}{c}\psi_{xx}(0,t)\right)dt  \\
&-\int_0^T v_{xx}(L,t)\left(a\varphi(L,t)+\frac{1}{c}\psi(L,t)\right)dt+\int_0^T v_{xx}(0,t)\left(a\varphi(0,t)+\frac{1}{c}\psi(0,t)\right)dt \notag \\
&+\int_0^T v_{x}(L,t)\left(a\varphi_x(L,t)+\frac{1}{c}\psi_x(L,t)\right)dt-\int_0^T v_{x}(0,t)\left(a\varphi_x(0,t)+\frac{1}{c}\psi_x(0,t)\right)dt \notag \\
&-\int_0^T v(L,t)\left(a\varphi_{xx}(L,t)+\frac{1}{c}\psi_{xx}(L,t)+\frac{r}{c}\psi(L,t)\right)dt\notag \\
&+\int_0^T v(0,t)\left(a\varphi_{xx}(0,t)+\frac{1}{c}\psi_{xx}(0,t)+\frac{1}{c}\psi(0,t)\right)dt. \notag
\end{align*}
Having the previous equality in hands, we consider backward system as follows
\begin{equation}\label{linadj}
\begin{cases}
\varphi_t + \varphi_{xxx} + \frac{ab}{c}\psi_{xxx}=0,  & \text{in}\,\, (0,L)\times (0,T), \\
\psi_t  +\frac{r}{c}\psi_x+a\varphi_{xxx} +\frac{1}{c}\psi_{xxx} =0,  & \text{in}\,\, (0,L)\times (0,T)
\end{cases}
\end{equation}
satisfying the boundary conditions,
\begin{equation}\label{bc1}
\begin{cases}
a\varphi_x(0,t)+\frac{1}{c}\psi_x(0,t) =0, & \text{in}\,\, (0,T),\\
\varphi_x (0,t) +\frac{ab}{c}\psi_x (0,t) =0, & \text{in}\,\, (0,T), \\
\varphi_{xx} (L,t) +\frac{ab}{c}\psi_{xx} (L,t) =0, & \text{in}\,\,  (0,T), \\
\varphi_{xx} (0,t) +\frac{ab}{c}\psi_{xx} (0,t) =0, & \text{in}\,\,  (0,T), \\
a\varphi_{xx}(L,t)+\frac{1}{c}\psi_{xx}(L,t)+\frac{r}{c}\psi(L,t) =0, & \text{in}\,\,  (0,T), \\
a\varphi_{xx}(0,t)+\frac{1}{c}\psi_{xx}(0,t)+\frac{r}{c}\psi(0,t) =0, & \text{in}\,\, (0,T)
\end{cases}
\end{equation}
and the final conditions
\begin{equation}\label{finaladj}
\varphi(x,T)= \varphi^1(x), \qquad \psi(x,T)= \psi^1(x),  \qquad  \text{in}\,\, (0,L).
\end{equation}
Since the coefficients satisfy $1-a^2b >0$, we can deduce from the first and second equations of \eqref{bc1} that the above boundary conditions can be written as
\begin{equation}\label{linadjbound}
\begin{cases}
\varphi_x(0,t)=\psi_x(0,t) =0, & \text{in}\,\, (0,T),\\
\varphi_{xx} (L,t) +\frac{ab}{c}\psi_{xx} (L,t) =0, & \text{in}\,\, (0,T),\\
\varphi_{xx} (0,t) +\frac{ab}{c}\psi_{xx} (0,t) =0, & \text{in}\,\, (0,T),\\
a\varphi_{xx}(L,t)+\frac{1}{c}\psi_{xx}(L,t)+\frac{r}{c}\psi(L,t) =0, & \text{in}\,\, (0,T), \\
a\varphi_{xx}(0,t)+\frac{1}{c}\psi_{xx}(0,t)+\frac{r}{c}\psi(0,t) =0, & \text{in}\,\, (0,T).
\end{cases}
\end{equation}

The following proposition is the key to prove the controllability of the linear system \eqref{gglin}. The result ensures the hidden regularity for the solution of the adjoint system \eqref{linadj}-\eqref{linadjbound}.
\begin{prop}\label{hiddenregularities}
For any $(\varphi^1, \psi^1) \in \X$, the system \eqref{linadj}-\eqref{linadjbound} admits a unique solution $(\varphi, \psi) \in \ZZ_T$, such that it possess the following sharp trace properties
\begin{equation}\label{hr4}
\begin{cases}
\underset{0< x < L}{\sup} \|  \partial^k_x \varphi(x,\cdot)\|_{H^{\frac{1-k}3}(0,T)}\le C_T\|\varphi^1\|_{L^2(0,L)}, \\
 \underset{0< x <L}{\sup} \|  \partial^k_x \psi(x,\cdot)\|_{H^{\frac{1-k}3}(0,T)}\le C_T \|\psi^1\|_{L^2(0,L)},
\end{cases}
  \end{equation}
for $k=0,1,2$, where $C_r$ is a positive constant.
\end{prop}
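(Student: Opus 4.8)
The plan is to reduce the coupled adjoint system to a pair of scalar Korteweg--de Vries equations, exactly as in the proof of Proposition \ref{prop1}, and then to invoke the scalar sharp trace regularity already recorded as Theorem F. First I would reverse time via $t\mapsto T-t$, which turns the backward problem \eqref{linadj}--\eqref{finaladj} into a forward problem of the same structural type carrying the initial datum $(\varphi^1,\psi^1)\in\X$; the existence and uniqueness of $(\varphi,\psi)\in\ZZ_T$ then follow from the same semigroup/diagonalization analysis used for \eqref{gglin}, and only the trace bounds \eqref{hr4} remain to be proved. The principal third-order part of \eqref{linadj} is governed by the coupling matrix with rows $(1,\tfrac{ab}{c})$ and $(a,\tfrac1c)$, which is the transpose of the matrix diagonalized in the proof of Proposition \ref{prop1} and therefore shares the same nonzero eigenvalues $\alpha_\pm$ (nonzero by \eqref{coef}). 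Applying the corresponding linear change of variables $(\varphi,\psi)\mapsto(\widetilde\varphi,\widetilde\psi)$ decouples the system into the two scalar equations $\widetilde\varphi_t+\alpha_-\widetilde\varphi_{xxx}=\widetilde F$ and $\widetilde\psi_t+\alpha_+\widetilde\psi_{xxx}=\widetilde G$, where $\widetilde F,\widetilde G$ are fixed linear combinations of the single lower-order term $\tfrac{r}{c}\psi_x$ and hence are \emph{first order} in $(\widetilde\varphi,\widetilde\psi)$.

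Next I would track how the boundary conditions \eqref{linadjbound} transform under this change of variables. The conditions $\varphi_x(0,t)=\psi_x(0,t)=0$ transform into $\widetilde\varphi_x(0,t)=\widetilde\psi_x(0,t)=0$, while the four second-order conditions at $x=0,L$ become, after inverting the constant-coefficient change of variables, two homogeneous Neumann/Robin-type conditions for each of $\widetilde\varphi$ and $\widetilde\psi$ of precisely the form appearing in the scalar adjoint problem \eqref{gg6_a}, up to the zeroth-order contributions coming from the $\tfrac{r}{c}\psi(0,t)$ and $\tfrac{r}{c}\psi(L,t)$ terms. I would move these zeroth-order boundary contributions to the right-hand side, so that each scalar problem is an adjoint-type KdV equation with homogeneous Neumann boundary conditions, a first-order interior source ($\widetilde F$ or $\widetilde G$), and a pair of boundary inputs built from the traces of $\psi$. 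For the purely homogeneous piece, Theorem F (estimate \eqref{9} of \cite{caicedo_caspistrano_zhang_2015}) delivers the sharp trace bounds $\sup_{0<x<L}\|\partial_x^k\widetilde\varphi(x,\cdot)\|_{H^{(1-k)/3}(0,T)}\le C_T\|\widetilde\varphi^1\|_{L^2(0,L)}$ for $k=0,1,2$, and likewise for $\widetilde\psi$.

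Finally I would incorporate the source and boundary perturbations by a Duhamel representation against the homogeneous scalar (adjoint) evolution, combined with the inhomogeneous smoothing estimates of Proposition \ref{prop1} and \cite[Proposition 2.5]{caicedo_caspistrano_zhang_2015}, and close the coupling by a contraction on a short interval $[0,\beta]$ whose length is independent of the data, iterated to cover $[0,T]$ exactly as in the proof of Proposition \ref{prop2}. Transferring the bounds back through the fixed, bounded and invertible linear change of variables then yields \eqref{hr4} for $(\varphi,\psi)$. The main obstacle is the genuinely lower-order coupling that blocks an exact decoupling: the interior transport term $\tfrac{r}{c}\psi_x$ is one derivative high and therefore borderline in the Kato-smoothing scale, and the boundary contributions $\tfrac{r}{c}\psi(0,t),\,\tfrac{r}{c}\psi(L,t)$ must be shown to lie in $H^{-1/3}(0,T)$ (consistent with the space $\HH_T$) using the already-established $k=0$ trace regularity of $\psi$ at the endpoints. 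The crux is to verify that routing all of these through the inhomogeneous scalar estimates does not degrade the sharp exponent $\tfrac{1-k}{3}$, which is precisely what the smoothing inherent in those estimates provides.
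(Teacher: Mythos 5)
Your proposal follows essentially the same route as the paper: the paper's proof likewise reverses the time (and additionally the space) variable to obtain a forward problem of the type treated in Proposition \ref{prop1}, treats the transport term $\tfrac{r}{c}\psi_x$ and the trace-dependent second-order boundary conditions as inhomogeneous data, and closes with the fixed-point scheme of Proposition \ref{prop2}, which internally rests on the same diagonalization and the scalar sharp Kato-smoothing estimates of \cite{caicedo_caspistrano_zhang_2015}. The only cosmetic difference is that the paper performs the reflection $x\mapsto L-x$ so that the first-derivative conditions sit at $x=L$ in the format of Proposition \ref{prop1}, whereas you keep them at $x=0$ and invoke Theorem F directly; your write-up is in fact more explicit than the paper's terse argument about why the boundary traces land in the required spaces.
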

\begin{proof}
Proceeding as the proof of Proposition \ref{prop2}, we obtain the result. Indeed, first we consider the change of variable $t\rightarrow T-t$ and $x\rightarrow L-x$, then for any $(\varphi,\psi)$ in $\ZZ_T$, we consider the system
\begin{equation*}
\begin{cases}
u_t + u_{xxx} + \frac{ab}{c}v_{xxx}=0,  & \text{in}\,\, (0,L)\times (0,T), \\
v_t  + au_{xxx} +\frac{1}{c}v_{xxx} =-\frac{r}{c}v_x,  & \text{in}\,\, (0,L)\times (0,T), \\
\varphi(x,0)= \varphi^0(x)\text{, } \psi(x,0)= \psi^0(x),  &  \text{in}\,\, (0,L),
\end{cases}
\end{equation*}
with boundary conditions
\begin{equation*}
\begin{cases}
u_x(L,t)=v_x(L,t) =0, & \text{in}\,\, (0,T),\\
u_{xx} (L,t)=-\frac{ab}{c}\psi_{xx} (L,t), & \text{in}\,\, (0,T),\\
u_{xx} (0,t)=-\frac{ab}{c}\psi_{xx} (0,t), & \text{in}\,\, (0,T),\\
v_{xx}(L,t)=-ac\varphi_{xx}(L,t)-r\psi(L,t), & \text{in}\,\, (0,T), \\
v_{xx}(0,t)=-ac\varphi_{xx}(0,t)-r\psi(0,t), & \text{in}\,\, (0,T).
\end{cases}
\end{equation*}
By using a fixed point argument the result is archived.
\end{proof}

The adjoint system possesses a relevant  estimate as described below.
\begin{prop}\label{prop3}
Any solution $(\varphi,\psi)$ of the adjoint system  \eqref{linadj}-\eqref{linadjbound} satisfies
\begin{align}\label{e6}
\|(\varphi^1,\psi^1)\|_{\X}^2 \leq &\frac{1}{T}\|(\varphi,\psi)\|_{L^2(0,T;\X)}+\frac{1}{2}\|\varphi_x(L,\cdot)\|_{L^2(0,T)}^2+\frac{b}{2c}\|\psi_x(L,\cdot)\|_{L^2(0,T)}^2 +\frac{br}{c^2}\|\psi(L,\cdot)\|_{L^2(0,T)}^2\notag\\
&+\frac{1}{2}\left\|\varphi_x(L,\cdot)+\frac{ab}{c}\psi_x(L,\cdot)\right\|_{L^2(0,T)}^2+\frac{b}{2c}\left\|a\varphi_x(L,\cdot)+\frac{1}{c}\psi_x(L,\cdot)\right\|_{L^2(0,T)}^2,
\end{align}
with initial data $(\varphi^1,\psi^1) \in \X$.
\end{prop}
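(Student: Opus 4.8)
The plan is to establish \eqref{e6} by the energy/multiplier method, using the multiplier adapted to the inner product of $\X$, namely $(\varphi,\tfrac{b}{c}\psi)$. Since the right-hand side of \eqref{e6} only involves boundary traces that are well defined thanks to the sharp trace estimates \eqref{hr4} of Proposition \ref{hiddenregularities}, I would first carry out every computation for smooth solutions and then extend the final inequality to arbitrary $(\varphi^1,\psi^1)\in\X$ by density, using the continuity of the trace maps furnished by \eqref{hr4}.

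The first step is to differentiate the energy $E(t):=\tfrac12\|(\varphi(\cdot,t),\psi(\cdot,t))\|_{\X}^2=\tfrac12\int_0^L\varphi^2\,dx+\tfrac{b}{2c}\int_0^L\psi^2\,dx$. Multiplying the first equation of \eqref{linadj} by $\varphi$, the second by $\tfrac{b}{c}\psi$, adding, and integrating by parts in $x$, the interior terms coming from the coupled third-order part cancel, and one is left with an identity $\tfrac{d}{dt}E(t)=G(t)$, where $G(t)$ is a sum of boundary traces at $x=0$ and $x=L$ (three integrations by parts for each $\partial_x^3$ term, plus the contribution $\tfrac{br}{2c^2}[\psi^2]_0^L$ of the transport term $\tfrac{r}{c}\psi_x$). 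I would then simplify $G(t)$ with the boundary conditions \eqref{linadjbound}. At $x=0$ the conditions $\varphi_x(0)=\psi_x(0)=0$ kill all first-order traces, and the two relations $\varphi_{xx}(0)+\tfrac{ab}{c}\psi_{xx}(0)=0$, $a\varphi_{xx}(0)+\tfrac1c\psi_{xx}(0)+\tfrac{r}{c}\psi(0)=0$ (which, since $1-a^2b>0$, give $\psi_{xx}(0)=-\tfrac{r}{1-a^2b}\psi(0)$) collapse the $x=0$ part to the single \emph{non-positive} term $-\tfrac{br}{2c^2}\psi(0,t)^2$. The analogous relations at $x=L$ eliminate the second-order traces in favour of $\psi(L,t)$, leaving the $x=L$ part of $G$, call it $G_1(t)$, as a quadratic form in the traces $\varphi_x(L,t),\psi_x(L,t),\psi(L,t)$.

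Next I would integrate in time with the averaging device: from $E(T)=E(t)+\int_t^T G(s)\,ds$, integrating in $t$ over $(0,T)$ and dividing by $T$ yields $\|(\varphi^1,\psi^1)\|_\X^2=2E(T)=\tfrac1T\|(\varphi,\psi)\|_{L^2(0,T;\X)}^2+\tfrac2T\int_0^T sG(s)\,ds$, which already produces the first term on the right of \eqref{e6}. Discarding the non-positive $x=0$ contribution and using $0\le s\le T$ together with $G_1\ge 0$ bounds the remaining integral by $2\int_0^T G_1(s)\,ds$, so that everything reduces to majorizing $G_1$ by the boundary quantities listed in \eqref{e6}.

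The heart of the matter, and the main obstacle, is the sign-indefinite cross term $\tfrac{ab}{c}\varphi_x(L)\psi_x(L)$ inside $G_1$. The hypothesis $1-a^2b>0$ is precisely what renders $G_1$ positive definite: completing the square gives $\tfrac12\varphi_x(L)^2+\tfrac{b}{2c^2}\psi_x(L)^2+\tfrac{ab}{c}\varphi_x(L)\psi_x(L)=\tfrac12\big(\varphi_x(L)+\tfrac{ab}{c}\psi_x(L)\big)^2+\tfrac{b(1-a^2b)}{2c^2}\psi_x(L)^2\ge 0$, which both justifies $G_1\ge0$ and explains why the squared combinations $\big\|\varphi_x(L,\cdot)+\tfrac{ab}{c}\psi_x(L,\cdot)\big\|_{L^2(0,T)}^2$ and $\big\|a\varphi_x(L,\cdot)+\tfrac1c\psi_x(L,\cdot)\big\|_{L^2(0,T)}^2$ appear on the right of \eqref{e6}: they are tailored to dominate this expression. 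Verifying that the full collection on the right of \eqref{e6} indeed majorizes $2\int_0^T G_1$ amounts to checking that a $2\times2$ matrix formed from the coefficients of the two trace variables is positive semidefinite, which once more is a consequence of $1-a^2b>0$; this is a routine (if slightly tedious) algebraic check rather than a conceptual difficulty.
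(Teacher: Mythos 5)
Your proposal is correct and follows essentially the same route as the paper: your energy identity $\frac{d}{dt}E=G$ followed by time-averaging is exactly the paper's multiplier $\left(-t\varphi,-\frac{b}{c}t\psi\right)$ in disguise, the boundary reductions agree (the $x=0$ contribution collapses to the discarded nonpositive term $-\frac{br}{2c^2}\psi^2(0,t)$, and the $x=L$ part is the quadratic form $\frac12\varphi_x^2(L)+\frac{ab}{c}\varphi_x(L)\psi_x(L)+\frac{b}{2c^2}\psi_x^2(L)+\frac{br}{2c^2}\psi^2(L)$), and your completing-the-square plus positive-semidefiniteness check plays the role of the paper's direct application of Young's inequality to the grouped products $\varphi_x(L)\left(\varphi_x(L)+\frac{ab}{c}\psi_x(L)\right)$ and $\frac{b}{c}\psi_x(L)\left(a\varphi_x(L)+\frac{1}{c}\psi_x(L)\right)$. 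No gap.
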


\begin{proof}
Multiplying the first equation of \eqref{linadj} by $-t\varphi$, the second one by $-\frac{b}{c}t\psi$ and integrating by parts over $(0, T)\times (0,L)$, we obtain
\begin{multline*}
\frac{T}{2}\int_0^L\varphi^2(x,T)dx=\frac{1}{2}\int_0^T\int_0^L\varphi^2(x,t)dxdt + \frac{ab}{c}\int_0^T\int_0^L t\varphi_{xxx}(x,t)\psi(x,t) dxdt\\
-\int_0^Tt\left[\varphi_{xx}(x,t)\varphi(x,t)-\frac{1}{2}\varphi_x^2(x,t)+\frac{ab}{c}\psi_{xx}(x,t)\varphi(x,t)-\frac{ab}{c}\psi_{x}(x,t)\varphi_x(x,t)\right. \\
\left.+\frac{ab}{c}\psi(x,t)\varphi_{xx}(x,t)\right]_0^Ldt
\end{multline*}
and
\begin{multline*}
\frac{Tb}{2c}\int_0^L\psi^2(x,T)dx=\frac{b}{2c}\int_0^T\int_0^L\psi^2(x,t)dxdt - \frac{ab}{c}\int_0^T\int_0^L t\varphi_{xxx}(x,t)\psi(x,t) dxdt \\
-\int_0^Tt\left[\frac{b}{c^2}\psi_{xx}(x,t)\psi(x,t)-\frac{b}{2c^2}\psi_x^2(x,t)+\frac{br}{2c^2}\psi^2(x,t)\right]_0^Ldt.
\end{multline*}
Adding the above identities, it follows that
\begin{multline*}
\frac{T}{2}\|(\varphi^1,\psi^1)\|_{\X}^2 = \frac{1}{2}\|(\varphi,\psi)\|^2_{L^2(0,T;\X)} - \int_0^Tt\left[\frac{b}{c}\psi(x,t)\left(a\varphi_{xx}(x,t)+\frac1c\psi_{xx}(x,t)+\frac{r}{c}\psi(x,t)\right) \right]_0^Ldt \\
-\int_0^Tt\left[\frac{b}{2c}\psi_x(x,t)\left(a\varphi_{x}(x,t)+\frac1c\psi_{x}(x,t)\right) -\frac{1}{2}\varphi_x(x,t)\left(\varphi_{x}(x,t)+\frac{ab}{c}\psi_{x}(x,t)\right)\right]_0^Ldt \\
+\int_0^Tt\left[\varphi(x,t)\left(\varphi_{xx}(x,t)+\frac{ab}{c}\psi_{xx}(x,t)\right) -\frac{br}{2c^2}\psi^2(x,t)\right]_0^Ldt.
\end{multline*}
Then, from \eqref{linadjbound}, we obtain
\begin{align*}
\frac{T}{2}\|(\varphi^1,\psi^1)\|_{\X}^2 \leq & \frac{1}{2}\|(\varphi,\psi)\|^2_{L^2(0,T;\X)} + \frac{b T}{2c}\int_0^T\psi_x(L,t)\left(a\varphi_{x}(L,t)+\frac1c\psi_{x}(L,t)\right)dt \\
&+\frac{T}{2}\int_0^T\varphi_x(L,t)\left(\varphi_{x}(L,t)+\frac{ab}{c}\psi_{x,t}(L,t)\right)dt \\
&+\frac{brT}{2c^2}\int_0^T\psi^2(L,t)dt-\frac{brT}{2c^2}\int_0^T\psi^2(0,t)dt.
\end{align*}
Finally, \eqref{e6} is obtained by applying Young inequality in the right hand side of the above inequality.
\end{proof}

\subsection{Nonlinear System}

In this subsection, attention will be given to the full nonlinear system \eqref{gg1}-\eqref{gg2}. The proof of the lemma below is available in \cite[Lemma 3.1]{bonasunzhang2003} and, therefore, we will omit it.
\begin{lemma}\label{lem3}
There exists a constant $C>0$, such that, for any $T>0$ and $(u,v) \in \ZZ_T$,
\begin{equation*}
\|uv_x\|_{L^1(0,T;L^2(0,L))}\leq C(T^{\frac{1}{2}}+T^\frac{1}{3})\|u\|_{\ZZ_T}\|v\|_{\ZZ_T}.
\end{equation*}
\end{lemma}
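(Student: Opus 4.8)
The plan is to reduce the space--time norm to a pointwise-in-$t$ product estimate and then integrate, tracking the powers of $T$ that the two summands $T^{1/2}$ and $T^{1/3}$ reflect. First I would use Hölder's inequality in $x$ to write, for a.e.\ $t\in(0,T)$,
\[
\|u(\cdot,t)\,v_x(\cdot,t)\|_{L^2(0,L)}\le \|u(\cdot,t)\|_{L^\infty(0,L)}\,\|v_x(\cdot,t)\|_{L^2(0,L)},
\]
so that the derivative is absorbed into $\|v(\cdot,t)\|_{H^1(0,L)}$ and only $u$ needs an $L^\infty_x$ control. On the bounded interval the Agmon/Gagliardo--Nirenberg inequality gives
\[
\|u(\cdot,t)\|_{L^\infty(0,L)}\le C\Big(\|u(\cdot,t)\|_{L^2(0,L)}^{1/2}\,\|u(\cdot,t)\|_{H^1(0,L)}^{1/2}+L^{-1/2}\|u(\cdot,t)\|_{L^2(0,L)}\Big),
\]
which separates the integrand into a zeroth-order piece and a genuinely interpolated piece.

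Next I would integrate in time and distribute powers of $T$ via Hölder. For the zeroth-order piece I would bound $\|u(\cdot,t)\|_{L^2}\le\|u\|_{C([0,T];L^2)}$ uniformly and use Cauchy--Schwarz, $\int_0^T\|v_x(\cdot,t)\|_{L^2}\,dt\le T^{1/2}\|v\|_{L^2(0,T;H^1)}$, which produces the contribution proportional to $T^{1/2}\|u\|_{\ZZ_T}\|v\|_{\ZZ_T}$. For the interpolated piece I would again extract $\|u\|_{C([0,T];L^2)}^{1/2}$ and estimate $\int_0^T\|u(\cdot,t)\|_{H^1}^{1/2}\|v(\cdot,t)\|_{H^1}\,dt$ by Hölder in $t$, choosing the exponents so that the two $H^1$-factors are matched to $\|u\|_{L^2(0,T;H^1)}^{1/2}$ and $\|v\|_{L^2(0,T;H^1)}$; the remaining factor of the constant function then contributes a fractional power of $T$. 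Summing the two contributions and using $\|u\|_{C([0,T];L^2)}^{1/2}\|u\|_{L^2(0,T;H^1)}^{1/2}\le\|u\|_{\ZZ_T}$ yields a bound of the form $C(T^{1/2}+T^{\theta})\|u\|_{\ZZ_T}\|v\|_{\ZZ_T}$. The argument is symmetric in the two factors, so it simultaneously controls all the quadratic terms $uu_x$, $vv_x$, $uv_x$ and $(uv)_x$ entering \eqref{gg1}, and feeds directly into the source-term hypothesis of Proposition \ref{prop1}.

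The main obstacle is obtaining the \emph{sharp} exponent $\theta=\tfrac13$. The bare Gagliardo--Nirenberg-plus-Hölder scheme above only yields $\theta=\tfrac14$, which is strictly worse than $\tfrac13$ as $T\to0^{+}$ and would weaken the later contraction estimates. Improving $\tfrac14$ to $\tfrac13$ requires exploiting the dispersive smoothing of the underlying group rather than treating $u$ and $v$ as generic elements of $\ZZ_T$: one replaces the crude interpolation of $\|u\|_{L^\infty_x}$ by a mixed space--time (Strichartz/Kato-type) estimate with favorable $T$-dependence, of exactly the kind furnished by the Kenig--Ponce--Vega smoothing theory that already underlies the hidden-regularity bounds \eqref{hr1} of Proposition \ref{prop1}. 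This sharp bookkeeping is the content of \cite[Lemma 3.1]{bonasunzhang2003}, which is why the statement is quoted here rather than reproved.
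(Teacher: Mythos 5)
The paper does not prove this lemma at all --- it is quoted from \cite[Lemma 3.1]{bonasunzhang2003} --- so your closing deferral to that reference coincides with the paper's own treatment, and the substantive content of your proposal is the elementary scheme in your first two paragraphs. That scheme is correct and, as you compute, it yields
\[
\|uv_x\|_{L^1(0,T;L^2(0,L))}\le C\bigl(T^{\frac12}+T^{\frac14}\bigr)\|u\|_{\ZZ_T}\|v\|_{\ZZ_T}
\]
via H\"older in $x$, the bound $\|u\|_{L^\infty_x}^2\le L^{-1}\|u\|_{L^2_x}^2+2\|u\|_{L^2_x}\|u_x\|_{L^2_x}$, and H\"older in $t$ with exponents $(4,2,4)$ after freezing $\|u\|_{L^2_x}^{1/2}$ at its supremum in time. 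This weaker form suffices for every use of the lemma in the paper: in Theorem \ref{nonlinearteo} the local time $T^*$ is taken small, and in the proof of Theorem \ref{main_theo} the time $T$ is fixed and the smallness comes from $\delta$, so any constant bounded for bounded $T$ feeds into the fixed-point arguments equally well. Your worry that the exponent $\tfrac14$ ``would weaken the later contraction estimates'' is therefore unfounded.

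The genuine gap is your proposed route from $T^{1/4}$ to $T^{1/3}$. The lemma quantifies over \emph{arbitrary} pairs $(u,v)\in\ZZ_T$: no equation is satisfied, so there is no underlying group whose Kato smoothing or Strichartz-type estimates could be invoked --- those are properties of solutions, not of generic elements of $C([0,T];L^2)\cap L^2(0,T;H^1)$. Worse, the exponent $\tfrac14$ that your argument produces is already sharp on $\ZZ_T$. Take $\varepsilon=T^{1/2}$ (with $T\le\min\{1,L^2\}$), let $\rho$ be a piecewise linear bump supported on $[0,\varepsilon]$, equal to $T^{-1/4}$ on $[\varepsilon/3,2\varepsilon/3]$, and let $W\in H^1_0(\varepsilon/3,2\varepsilon/3)$ with $\|W'\|_{L^2}=1$, so that $\|W\|_{L^2}\le\varepsilon$ by Poincar\'e's inequality. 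Setting $u(x,t)=\rho(x)$ and $v(x,t)=T^{-1/2}W(x)$, one checks $\|u\|_{\ZZ_T}\asymp\|v\|_{\ZZ_T}\asymp 1$, while $\|u(\cdot,t)v_x(\cdot,t)\|_{L^2(0,L)}=T^{-1/4}\cdot T^{-1/2}=T^{-3/4}$ for every $t$, hence $\|uv_x\|_{L^1(0,T;L^2)}=T^{1/4}$, and $T^{1/4}/(T^{1/2}+T^{1/3})\to\infty$ as $T\to0^+$. So no proof of the stated exponent $\tfrac13$ can exist under the stated hypotheses; the estimate should be read (and used) with $T^{1/4}$, which your elementary argument already establishes and which is harmless everywhere the lemma is invoked in this paper.
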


We first show that system \eqref{gg1}-\eqref{gg2} is locally well-posed in the space $\ZZ_T$.

\begin{thm}\label{nonlinearteo}
For any $(u^0,v^0) \in \X$ and $\overrightarrow{h}=(h_0,h_1,h_2), \overrightarrow{g}=(g_0,g_1,g_2) \in \HH_T$, there exists $T^*>0$, depending on $\|(u^0,v^0)\|_{\X}$, such that the problem \eqref{gg1}-\eqref{gg2} admits a unique solution $(u,v) \in \ZZ_{T^*}$ with
\begin{equation*}
\partial_x^k u,\partial_x^k v \in L^{\infty}_x(0,L;H^{\frac{1-k}{3}}(0,T^*)),  \quad k=0,1,2.
\end{equation*}
Moreover, the corresponding solution map is Lipschitz continuous.
\end{thm}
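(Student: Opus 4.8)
The plan is to solve the full nonlinear problem by a contraction mapping argument, using the linear theory of Propositions \ref{prop1}--\ref{prop2} to invert the dispersive part and the bilinear estimate of Lemma \ref{lem3} to absorb the quadratic terms. First I would recast the system so that every quadratic nonlinearity is treated as a source term, keeping the linear transport term $\frac{r}{c}v_x$ inside the operator of \eqref{gglin}. Writing the second equation in normalized form, for a given pair $(u,v)$ set
\[
f := -uu_x - a_1 vv_x - a_2(uv)_x, \qquad s := -\tfrac1c vv_x - \tfrac{a_2 b}{c} uu_x - \tfrac{a_1 b}{c}(uv)_x,
\]
and let $\Gamma(u,v)=(\omega,\eta)$ be the unique solution of \eqref{gglin} with right-hand side $(f,s)$, the fixed boundary data $\vec h,\vec g$, and initial data $(u^0,v^0)$. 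Since the estimates of Propositions \ref{prop1}--\ref{prop2} allow an $L^1(0,T;L^2(0,L))$ source, $\Gamma$ maps the space $\FF_{T^*}$ into itself and
\[
\|\Gamma(u,v)\|_{\FF_{T^*}} \le C\Big(\|(u^0,v^0)\|_\X + \|(\vec h,\vec g)\|_{\HH_{T^*}} + \|(f,s)\|_{L^1(0,T^*;(L^2(0,L))^2)}\Big),
\]
with $C$ independent of $T^*\le T$.

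Next I would estimate the source. Each quadratic term has the form $w_1 (w_2)_x$ with $w_1,w_2\in\{u,v\}$, and the product $(uv)_x = u_x v + u v_x$ splits into two such terms; applying Lemma \ref{lem3} (and its counterpart with the two factors interchanged, which controls $u_x v$) yields
\[
\|(f,s)\|_{L^1(0,T^*;(L^2(0,L))^2)} \le C\,\big((T^*)^{1/2} + (T^*)^{1/3}\big)\,\|(u,v)\|_{\FF_{T^*}}^2 .
\]
Combining with the linear bound and putting $C_0 := C\big(\|(u^0,v^0)\|_\X + \|(\vec h,\vec g)\|_{\HH_T}\big)$, I restrict $\Gamma$ to the ball $B_R=\{(u,v)\in\FF_{T^*}:\|(u,v)\|_{\FF_{T^*}}\le R\}$ with $R:=2C_0$, and choose $T^*\le T$ small enough that $C\big((T^*)^{1/2}+(T^*)^{1/3}\big)R\le\tfrac12$. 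This gives $\|\Gamma(u,v)\|_{\FF_{T^*}}\le C_0+\tfrac12 R=R$, so $\Gamma$ maps $B_R$ into itself.

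For the contraction property I would note that $\Gamma(u_1,v_1)-\Gamma(u_2,v_2)$ solves the same linear system with zero initial and boundary data and source equal to the difference of the nonlinearities. Exploiting bilinearity, e.g. $u_1(u_1)_x - u_2(u_2)_x = (u_1-u_2)(u_1)_x + u_2(u_1-u_2)_x$, and invoking Lemma \ref{lem3} once more, the difference of sources is bounded by $C\big((T^*)^{1/2}+(T^*)^{1/3}\big)\big(\|(u_1,v_1)\|_{\FF_{T^*}}+\|(u_2,v_2)\|_{\FF_{T^*}}\big)\|(u_1,v_1)-(u_2,v_2)\|_{\FF_{T^*}}$. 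On $B_R$ this is at most $2CR\big((T^*)^{1/2}+(T^*)^{1/3}\big)\|(u_1,v_1)-(u_2,v_2)\|_{\FF_{T^*}}$, which after possibly shrinking $T^*$ is $\le\tfrac12\|(u_1,v_1)-(u_2,v_2)\|_{\FF_{T^*}}$. Hence $\Gamma$ is a contraction on $B_R$, and the Banach fixed point theorem produces the unique solution $(u,v)\in\FF_{T^*}\subset\ZZ_{T^*}$; the same difference estimate, now with the data allowed to vary, yields the Lipschitz continuity of the solution map.

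The main obstacle is fully encapsulated in Lemma \ref{lem3}: its essential feature is the gain of a strictly positive power of $T^*$, which is precisely what allows both the self-mapping and the contraction constants to be made small on short time intervals. Once that estimate is available---together with the observation that the extra term $u_x v$ arising from $(uv)_x$ is handled by the same lemma with the factors interchanged---the rest is the standard fixed-point machinery. The only point to monitor is that $R$, and hence $T^*$, depend on the data through $C_0$; since $\|(\vec h,\vec g)\|_{\HH_{T^*}}\le\|(\vec h,\vec g)\|_{\HH_T}$ by restriction, these quantities remain under control and $T^*$ may be taken to depend only on $\|(u^0,v^0)\|_\X$ together with the fixed boundary data.
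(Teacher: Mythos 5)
Your proposal is correct and follows essentially the same route as the paper: a contraction mapping on the space $\FF_{T^*}$, using the linear estimates of Propositions \ref{prop1}--\ref{prop2} for the Duhamel part and Lemma \ref{lem3} to gain the small power of $T^*$ needed for both the self-mapping and contraction bounds. The only (immaterial) difference is that you keep the transport term $\frac{r}{c}v_x$ inside the linear operator, whereas the paper treats it as part of the source and absorbs it via $\|v_x\|_{L^1(0,T^*;L^2(0,L))}\le (T^*)^{1/2}\|v\|_{\ZZ_{T^*}}$; your written source terms are in fact the complete ones, while the paper's displayed $f(u,v)$ and $s(u,v)$ drop the $uu_x$ and $\frac1c vv_x$ terms, respectively, by an apparent typo.
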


\begin{proof}
Let $\FF_T= \left\lbrace (u,v) \in \ZZ_T: (u,v) \in  L^{\infty}_x(0,L;(H^{\frac{1-k}{3}}(0,T))^2), k=0,1,2\right\rbrace$ equipped with the norm
\[
\|(u,v)\|_{\FF_T} = \|(u,v)\|_{\ZZ_T}+\sum_{k=0}^{2}\|(u,v)\|_{L^{\infty}_x(0,L;(H^{\frac{1-k}{3}}(0,T))^2)}.
\]
Let $0< T^* \leq T$ to be determined later. For each $u,v \in \FF_{T^*}$, consider the problem
\begin{equation}\label{e1'}
\left\lbrace \begin{tabular}{l l}
$\omega_t + \omega_{xxx} + a\eta_{xxx}  =f(u,v)$, & in $(0,L)\times (0,T^*)$,\\
$\eta_t +\frac{ab}{c}\omega_{xxx} + \frac{1}{c}\eta_{xxx} =s(u,v)$, & in $(0,L)\times (0,T^*)$,\\
$\omega_{xx}(0,t) = h_0(t),\,\,\omega_x(L,t) = h_1(t),\,\,\omega_{xx}(L,t) = h_2(t)$,& in $(0,T^*)$,\\
$\eta_{xx}(0,t) =g_0(t),\,\,\eta_x(L,t) = g_1(t),\,\, \eta_{xx}(L,t) = g_2(t)$,& in $(0,T^*)$,\\
$\omega(x,0)=u^0(x), \quad v(x,0) = v^0(x)$, & in $(0,L)$,
\end{tabular}\right.
\end{equation}
where $$f(u,v)=-a_1(vv_x)-a_2(uv)_x$$ and $$s(u,v)=-\frac{r}{c}v_x -\frac{a_2b}{c}(uu_x)-\frac{a_1b}{c}(uv)_x.$$
Since $\|v_x\|_{L^1(0,T^*;L^2(0,L))}\leq \beta^{\frac12}\|v\|_{\ZZ_{T^*}}$, from Lemma \ref{lem3} we deduce that $f(u,v)$ and $s(u,v)$ belong to $L^1(0,T^*;L^2(0,L))$ and satisfies
\begin{equation}\label{f,g}
\|(f,s)\|_{L^1(0,T^*;(L^2(0,L))^2)}\leq C_1 ((T^*)^{\frac12}+(T^*)^{\frac13})\left( \|u\|_{\ZZ_{T^*}}^2+(\|u\|_{\ZZ_{T^*}}+1)\|v\|_{\ZZ_{T^*}}+\|v\|_{\ZZ_{T^*}}^2\right),
\end{equation}
for some positive constant $C_1$. Then, according to Proposition \ref{prop1}, we can define the operator
\[
\Gamma: \FF_{T^*} \rightarrow \FF_{T^*}, \quad \text{given by} \quad \Gamma(u,v)=(\omega,\eta),
\]
where $(\omega,\eta)$ is the solution of (\ref{e1'}). Moreover,
\begin{equation}\label{f,g,h}
\|\Gamma(u,v)\|_{\FF_{T^*}}\leq C\left\lbrace \|(u^0,v^0)\|_{\X}+\|(\overrightarrow{h},\overrightarrow{g})\|_{\HH_{T^*}}+\|(f,s)\|_{L^1(0,T^*;(L^2(0,L))^2)} \right\rbrace,
\end{equation}
where the positive  constant $C$ depends only on $T^*$. Combining \eqref{f,g} and \eqref{f,g,h}, we obtain
\begin{align*}
\|\Gamma(u,v)\|_{\FF_{T^*}}\leq &C\left\lbrace \|(u^0,v^0)\|_{\X}+\|(\overrightarrow{h},\overrightarrow{g})\|_{\HH_{T^*}}\right\rbrace \\
&+CC_1((T^*)^{\frac12}+(T^*)^{\frac13})\left( \|u\|_{\ZZ_{T^*}}^2+(\|u\|_{\ZZ_{T^*}}+1)\|v\|_{\ZZ_{T^*}}+\|v\|_{\ZZ_{T^*}}^2\right).
\end{align*}
Let $(u,v) \in B_{r}(0):=\left\lbrace (u,v) \in \FF_{T^*}: \|(u,v)\|_{\FF_{T^*}}\leq r\right\rbrace$, where $r=2C\left\lbrace \|(u^0,v^0)\|_{\X}+\|(\overrightarrow{h},\overrightarrow{g})\|_{\HH_T}\right\rbrace$.
From the estimate above, it follows that
\begin{equation}\label{e2'}
\|\Gamma(u,v)\|_{\FF_{T^*}}\leq \frac{r}{2}+CC_1((T^*)^{\frac12}+(T^*)^{\frac13})\left( 3r+1\right)r.
\end{equation}
Then, by choosing $T^{*}>0$, such that
\begin{equation}\label{beta1}
CC_1((T^*)^{\frac12}+(T^*)^{\frac13})\left( 3r+1\right)\leq \frac{1}{2},
\end{equation}
from (\ref{e2'}), we have
$$\|\Gamma(u,v)\|_{\FF_{T^*}}\leq r.$$
Thus, we conclude that
\[
\Gamma: B_r(0)\subset\FF_{T^*} \rightarrow B_r(0).
\]
On the other hand,  $\Gamma(u_1,v_1)-\Gamma(u_2,v_2)$ solves the system
\begin{equation*}
\left\lbrace \begin{tabular}{l l}
$\omega_t + \omega_{xxx} + a\eta_{xxx}  =f(u_1,v_1)-f(u_2,v_2)$, & in $(0,L)\times (0,T^*)$,\\
$\eta_t +\frac{ab}{c}\omega_{xxx} + \frac{1}{c}\eta_{xxx} =s(u_1,v_1)-s(u_2,v_2) $, & in $(0,L)\times (0,T^*)$,\\
$\omega_{xx}(0,t) = \omega_x(L,t) = \omega_{xx}(L,t) = 0$,& in $(0,T^*)$,\\
$\eta_{xx}(0,t)=\eta_x(L,t) = \eta_{xx}(L,t) = 0$,& in $(0,T^*)$,\\
$\omega(x,0)=0, \quad v(x,0) = 0$, & in $(0,L)$,
\end{tabular}\right.
\end{equation*}
where, $f(u,v)$ and $s(u,v)$ were defined in \eqref{e1'}. Note that
\begin{align*}
|f(u_1,v_1)-f(u_2,v_2)| \leq  C_2|\left( (v_2-v_1)v_{2,x}+ v_1(v_2-v_1)_x+(u_2(v_2-v_1))_x +((u_2-u_1)v_1)_x\right)|
\end{align*}
and
\begin{multline*}
|s(u_1,v_1)-s(u_2,v_2)|  \leq C_2|\left( (v_2-v_1)_x+ (u_2-u_1)u_{2,x}+ u_1(u_2-u_1)_x \right. \\
\left. +(u_2(v_2-v_1))_x +((u_2-u_1)v_1)_x\right)|,
\end{multline*}
for some positive constant $C_2$. Then, Proposition \ref{prop1} and Lemma \ref{lem3}, give us the following estimate
\begin{align*}
\|\Gamma(u_1,v_1)-\Gamma(u_2,v_2)\|_{\FF_{T^*}}&\leq C\|(f(u_1,v_1)-f(u_2,v_2), s(u_1,v_1)-s(u_2,v_2))\|_{L^1(0,T^*;(L^2(0,L))^2)} \\
&\leq C_3((T^*)^{\frac{1}{2}}+(T^*)^{\frac{1}{3}})(8r+1)\|(u_1-u_2,v_1-v_2)\|_{\FF_{T^*}},
\end{align*}
for some positive constant $C_3$. Choosing $T^*$, satisfying \eqref{beta1} and such that
$$C_3((T^*)^{\frac{1}{2}}+(T^*)^{\frac{1}{3}})(8r+1) \leq \frac{1}{2},$$
we obtain
\[\|\Gamma(u_1,v_1)-\Gamma(u_2,v_2)\|_{\FF_{T^*}}\leq \frac{1}{2}\|(u_1-u_2,v_1-v_2)\|_{\FF_{T^*}}.
\]
Hence, $\Gamma: B_r(0) \rightarrow B_r(0)$ is a contraction and, by Banach fixed point theorem, we obtain a unique $(u,v) \in B_r(0)$, such that $\Gamma(u,v) = (u,v) \in \FF_{T^*}$ and, therefore, the proof is complete.
\end{proof}
\begin{remark}\label{integralform}
From the proof of Proposition \ref{prop1},  we deduce that solution of the system (\ref{gg1})-\eqref{gg2} can be written as
\begin{align*}
\left( \begin{array}{cc}
u(t)\\ v(t)
\end{array} \right)&= W_0(t)\left( \begin{array}{cc}
u^0(x) \\ v^0(x)
\end{array} \right) + W_{bdr}(t)\left( \begin{array}{cc}
\overrightarrow{h} \\ \overrightarrow{g}
\end{array} \right)\\
& -\int_0^t W_0(t-\tau) \left( \begin{array}{cc}
a_1(vv_x)(\tau)+a_2(uv)_x(\tau)\vspace{2mm} \\
\frac{r}{c}v_x(\tau)+\frac{a_2b}{c}(uu_x)(\tau)+\frac{a_1b}{c}(uv)_x(\tau)
\end{array} \right)d\tau,
\end{align*}
with
\begin{equation*}
W_0(t) = \left( \begin{array}{cc}
W_0^{-}(t) & 0 \\
0 & W_0^{+}(t)
\end{array}\right) \quad \text{and} \quad W_{bdr}(t) = \left( \begin{array}{cc}
W_{bdr}^{-}(t) & 0 \\
0 & W_{bdr}^{+}(t)
\end{array}\right),
\end{equation*}
where $\{W_0^{\pm}(t)\}_{t\geq 0}$ are the $C_0$-semigroup in the space $L^2(0,L)$ generated by the linear operators
\[
A^{\pm}=-\alpha_{\pm}g''',
\]
where
\[
\alpha_{\pm} = -\frac{1}{2}\left(\left(\frac{1}{c}-1\right)\pm \sqrt{\left(\frac{1}{c}-1\right)^2+\frac{4a^2b}{c}}\right),
\]
with domain $$D(A^{\pm})=\{g \in H^3(0,L): g''(0)=g'(L)=g''(L)=0\},$$ and $\{W_{bdr}^{\pm}(x)\}_{t\geq 0}$ is the operator defined in Lemma\ref{l4-d}.
\end{remark}

\section{Exact Boundary Controllability for the Linear System}\label{Sec3}

In this section, we study the existence of controls $\overrightarrow{h}:=(h_0,h_1,h_2)$ and $\overrightarrow{g}:=(g_0,g_1,g_2) \in \HH_T$,
such that the solution $(u,v)$ of the system
\begin{equation}\label{ggln4}
\left\lbrace \begin{tabular}{l l}
$u_t + u_{xxx} + av_{xxx}  =0$ & in $(0,L)\times (0,T)$,\\
$v_t +\frac{r}{c}v_x+\frac{ab}{c}u_{xxx} + \frac{1}{c}v_{xxx} =0$ & in $(0,L)\times (0,T)$,\\
$u(x,0)=u^0(x), \quad v(x,0) = v^0(x)$, & in $(0,L)$,
\end{tabular}\right.
\end{equation}
satisfying the boundary conditions
\begin{equation}\label{ggln4b}
\left\lbrace\begin{tabular}{l l}
$u_{xx}(0,t) = h_0(t),\,\,u_x(L,t) = h_1(t),\,\,u_{xx}(L,t) = h_2(t)$ & in $(0,T)$,\\
$v_{xx}(0,t) =g_0(t),\,\,v_x(L,t) = g_1(t),\,\, v_{xx}(L,t) = g_2(t)$ & in $(0,T)$,
\end{tabular}\right.
\end{equation}
satisfies
\begin{equation}\label{finaldata}
u(\cdot,T)=u^1(\cdot), \qquad   \text{and}  \qquad v(\cdot,T)=v^1(\cdot).
\end{equation}

More precisely, we have the following definition:

\begin{definition}
Let $T > 0$. System \eqref{ggln4}-\eqref{ggln4b} is exact controllable in time $T$ if for any initial and final data $(u^0,v^0)$ and $(u^1,v^1)$ in $\X$, there exist control functions $\overrightarrow{h}=(h_0,h_1,h_2)$ and $\overrightarrow{g}=(g_0,g_1,g_2)$ in $\HH_T$,  such that the solution of \eqref{ggln4}-\eqref{ggln4b} satisfies \eqref{finaldata}.
\end{definition}
\begin{remark}
Without any loss of generality, we shall consider only the case $u^0 = v^0 = 0$. Indeed, let $(u^0,v^0)$, $(u^1,v^1)$ in $\X$ and $\overrightarrow{h}$, $\overrightarrow{g}$ in $\HH_T$ be controls which lead the solution $(\widetilde{u}, \widetilde{v})$ of \eqref{ggln4} from the zero initial data to the final state $(u^1,v^1)-(u(T),v(T))$, where $(u,v)$ is the mild solution corresponding to \eqref{ggln4}-\eqref{ggln4b} with initial data $(u^0,v^0)$. It follows immediately that these controls also lead to the solution $(\widetilde{u},\widetilde{v})+(u,v)$ of \eqref{ggln4}-\eqref{ggln4b} from $(u^0,v^0)$ to the final state $(u^1,v^1)$.
\end{remark}

In the following pages, we will analyze the exact controllability of the system \eqref{ggln4}-\eqref{ggln4b} for different combinations of four controls and one control.

\subsection{Four Controls}
\subsubsection{Case 1} Consider the following boundary conditions:
\begin{equation}\label{gglnb1}
\left\lbrace\begin{tabular}{l l}
$u_{xx}(0,t) = h_0(t),\,\,u_x(L,t) = h_1(t),\,\,u_{xx}(L,t) = 0$ & in $(0,T)$,\\
$v_{xx}(0,t) =g_0(t),\,\,v_x(L,t) = g_1(t),\,\, v_{xx}(L,t) = 0$ & in $(0,T)$.
\end{tabular}\right.
\end{equation}
We first give an equivalent condition for the exact controllability property.
\begin{lemma}\label{equicontrol}
For any $(u^1,v^1)$ in $\X$, there exist four controls $\overrightarrow{h}=(h_0,h_1,0)$ and $\overrightarrow{g}=(g_0,g_1,0)$ in $\HH_T$, such that the solution $(u,v)$ of \eqref{ggln4}-\eqref{gglnb1} satisfies \eqref{finaldata} if and only if
\begin{align}
\int_0^L\left(u^1(x)\varphi^1(x)+v^1(x)\psi^1(x)\right) dx =& \int_0^T h_0(t)\left(\varphi(0,t)+\frac{ab}{c}\psi(0,t)\right)dt\nonumber\\
& +\int_0^T h_1(t)\left(\varphi_x(L,t)+\frac{ab}{c}\psi_x(L,t)\right)dt\nonumber \\
&+\int_0^T g_0(t)\left(a\varphi(0,t)+\frac{1}{c}\psi(0,t)\right)dt\label{ad2}\\
&+\int_0^T g_1(t)\left(a\varphi_x(L,t)+\frac{1}{c}\psi_x(L,t)\right)dt,\nonumber
\end{align}
for any $(\varphi^1,\psi^1)$ in $\X$, where $(\varphi,\psi)$ is the solution of the backward system \eqref{linadj}-\eqref{linadjbound} with initial data $(\varphi^1,\psi^1)$.
\end{lemma}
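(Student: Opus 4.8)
The plan is to establish the stated equivalence through the transposition (duality) identity that was already assembled when the adjoint system \eqref{linadj}--\eqref{linadjbound} was derived. By the reduction recorded in the Remark preceding the lemma, I may assume $u^0=v^0=0$, so that for a given pair of controls $\vec{h}=(h_0,h_1,0)$, $\vec{g}=(g_0,g_1,0)\in\HH_T$ the solution $(u,v)$ of \eqref{ggln4}--\eqref{gglnb1} is the one furnished by Proposition \ref{prop2}, with final state $(u(\cdot,T),v(\cdot,T))\in\X$. First I would fix an arbitrary $(\varphi^1,\psi^1)\in\X$ and let $(\varphi,\psi)\in\ZZ_T$ be the corresponding solution of the backward system \eqref{linadj}--\eqref{linadjbound} provided by Proposition \ref{hiddenregularities}. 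Multiplying the two equations of \eqref{ggln4} by $\varphi$ and $\psi$ and integrating over $(0,L)\times(0,T)$ reproduces precisely the integration-by-parts identity displayed just before \eqref{linadj}. Since $(\varphi,\psi)$ solves the adjoint equations, every double integral over $(0,L)\times(0,T)$ in that identity vanishes, so $\int_0^L\big(u(\cdot,T)\varphi^1+v(\cdot,T)\psi^1\big)\,dx$ equals the sum of the boundary terms alone.

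Next I would insert the control boundary conditions \eqref{gglnb1} together with the adjoint boundary conditions \eqref{linadjbound}. The conditions $u_{xx}(L,\cdot)=0$ and $v_{xx}(L,\cdot)=0$ annihilate the two boundary terms at $x=L$ weighted by $\varphi+\frac{ab}{c}\psi$ and $a\varphi+\frac1c\psi$; the relations $\varphi_x(0,\cdot)=\psi_x(0,\cdot)=0$, $\varphi_{xx}(0,\cdot)+\frac{ab}{c}\psi_{xx}(0,\cdot)=0$, $\varphi_{xx}(L,\cdot)+\frac{ab}{c}\psi_{xx}(L,\cdot)=0$, and the two mixed conditions involving $\psi(0,\cdot)$ and $\psi(L,\cdot)$ cancel exactly the boundary terms carrying the non-prescribed traces $u_x(0,\cdot)$, $u(0,\cdot)$, $u(L,\cdot)$, $v_x(0,\cdot)$, $v(0,\cdot)$, $v(L,\cdot)$. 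What survives are the four terms carrying $h_0=u_{xx}(0,\cdot)$, $h_1=u_x(L,\cdot)$, $g_0=v_{xx}(0,\cdot)$ and $g_1=v_x(L,\cdot)$, which are precisely the right-hand side of \eqref{ad2}. This produces a master identity asserting that $\int_0^L\big(u(\cdot,T)\varphi^1+v(\cdot,T)\psi^1\big)\,dx$ equals the right-hand side of \eqref{ad2}, valid for every $(\varphi^1,\psi^1)\in\X$.

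With the master identity in hand the equivalence is immediate. If the controls steer $(u,v)$ to $(u^1,v^1)$, then replacing $u(\cdot,T),v(\cdot,T)$ by $u^1,v^1$ yields \eqref{ad2}. Conversely, if \eqref{ad2} holds for all $(\varphi^1,\psi^1)\in\X$, subtracting it from the master identity gives $\int_0^L\big((u(\cdot,T)-u^1)\varphi^1+(v(\cdot,T)-v^1)\psi^1\big)\,dx=0$ for every $(\varphi^1,\psi^1)\in\X$; taking $(\varphi^1,\psi^1)=(u(\cdot,T)-u^1,\,v(\cdot,T)-v^1)$ forces $(u(\cdot,T),v(\cdot,T))=(u^1,v^1)$, i.e. \eqref{finaldata}.

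The main obstacle — and the only genuinely delicate point — is that $(u,v)$ and $(\varphi,\psi)$ belong only to $\ZZ_T$, so neither the integration by parts nor the individual boundary integrals are a priori justified. I would handle this by a density argument: establish the master identity first for smooth data, for which all computations are classical, and then pass to the limit. Here the hidden trace regularity is decisive, since Proposition \ref{prop1} and Proposition \ref{hiddenregularities} guarantee $\partial_x^k\varphi,\partial_x^k\psi\in L^\infty_x(0,L;H^{\frac{1-k}{3}}(0,T))$ for $k=0,1,2$. Consequently each surviving boundary pairing is well defined as a duality pairing continuous in the data — namely $h_0,g_0\in H^{-\frac13}(0,T)$ tested against traces in $H^{\frac13}(0,T)$ (the case $k=0$), and $h_1,g_1\in L^2(0,T)$ tested against traces in $L^2(0,T)$ (the case $k=1$) — which legitimizes the passage to the limit and completes the proof.
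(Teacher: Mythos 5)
Your proposal is correct and follows essentially the same route as the paper: the paper's proof is the single sentence that \eqref{ad2} follows by multiplying \eqref{ggln4} by the adjoint solution, integrating by parts, and using the boundary conditions \eqref{gglnb1}, which is exactly the duality identity you assemble (your bookkeeping of which boundary terms survive matches the identity displayed before \eqref{linadj}). Your additional care about the converse direction, the interpretation of the $h_0,g_0$ terms as $H^{-\frac13}$--$H^{\frac13}$ pairings, and the density argument justified by Propositions \ref{prop1} and \ref{hiddenregularities} only makes explicit what the paper leaves implicit.
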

\begin{proof}
The relation \eqref{ad2} is obtained by multiplying the equations in \eqref{ggln4} by the solution $(\varphi,\psi)$ of  \eqref{linadj}-\eqref{linadjbound}, integrating by parts and using the boundary conditions \eqref{gglnb1}.
\end{proof}

The following observability inequality plays a fundamental role for the study of the controllability properties.
\begin{prop}\label{prop4}
For $T>0$ and $L> 0$, there exists a constant $C:=C(T,L) > 0$, such that
\begin{align}\label{obineq1}
\|(\varphi^1, \psi^1)\|_{\X}^2\leq C&\left\lbrace  \left\|(-\Delta_t)^{\frac16}\left(\varphi(0,\cdot)+\frac{ab}{c}\psi(0,\cdot)\right)\right\|_{L^2(0,T)}^2 + \left \|\varphi_x(L,\cdot)+\frac{ab}{c}\psi_x(L,\cdot)\right \|_{L^2(0,T)}^2 \right. \notag \\
&\left. +\left \|(-\Delta_t)^{\frac16}\left(a\varphi(0,\cdot)+\frac{1}{c}\psi(0,\cdot)\right)\right \|_{L^2(0,T)}^2 + \left \|a\varphi_x(L,\cdot)+\frac{1}{c}\psi_x(L,\cdot)\right \|_{L^2(0,T)}^2\right\rbrace,
\end{align}
for any $(\varphi^1,\psi^1) \in \X$, where $(\varphi,\psi)$ is a solution of \eqref{linadj}-\eqref{linadjbound} with initial data $(\varphi^1,\psi^1)$, where $\Delta_t:=\partial_t^2$.
\end{prop}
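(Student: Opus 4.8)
The plan is to obtain \eqref{obineq1} from the multiplier identity \eqref{e6} of Proposition \ref{prop3} through a classical compactness--uniqueness scheme, in which the hidden regularity of Proposition \ref{hiddenregularities} furnishes the compactness of the lower order remainder, and the structural hypothesis \eqref{coef} (that is, $1-a^2b>0$) is used repeatedly to decouple the boundary traces.

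First I would convert \eqref{e6} into a \emph{weak} observability inequality. The linear change of variables $(\xi,\eta)\mapsto\big(\xi+\tfrac{ab}{c}\eta,\;a\xi+\tfrac1c\eta\big)$ has determinant $\tfrac{1-a^2b}{c}\neq0$, hence is an isomorphism of $\R^2$; applied pointwise in $t$ with $\xi=\varphi_x(L,t)$ and $\eta=\psi_x(L,t)$ it yields a constant $K>0$ with
\[
\|\varphi_x(L,\cdot)\|_{L^2(0,T)}^2+\|\psi_x(L,\cdot)\|_{L^2(0,T)}^2\le K\left(\left\|\varphi_x(L,\cdot)+\tfrac{ab}{c}\psi_x(L,\cdot)\right\|_{L^2(0,T)}^2+\left\|a\varphi_x(L,\cdot)+\tfrac1c\psi_x(L,\cdot)\right\|_{L^2(0,T)}^2\right).
\]
Substituting this into \eqref{e6} absorbs the bare traces $\varphi_x(L,\cdot)$ and $\psi_x(L,\cdot)$ into the two ``good'' combinations that already occur in \eqref{obineq1}, and leaves only the interior term $\tfrac1T\|(\varphi,\psi)\|_{L^2(0,T;\X)}^2$ of \eqref{e6} together with $\|\psi(L,\cdot)\|_{L^2(0,T)}^2$. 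Thus there is $C>0$ such that
\[
\|(\varphi^1,\psi^1)\|_\X^2\le C\,\mathcal{B}(\varphi,\psi)+C\left(\|\psi(L,\cdot)\|_{L^2(0,T)}^2+\|(\varphi,\psi)\|_{L^2(0,T;\X)}^2\right),
\]
where $\mathcal{B}(\varphi,\psi)$ denotes the sum of the four observation terms on the right of \eqref{obineq1}.

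Next I would argue in the usual way. Let $N_T\subset\X$ be the set of final data $(\varphi^1,\psi^1)$ for which the associated solution of \eqref{linadj}--\eqref{linadjbound} has $\mathcal{B}(\varphi,\psi)=0$. On $N_T$ the displayed weak inequality reads $\|(\varphi^1,\psi^1)\|_\X^2\le C(\|\psi(L,\cdot)\|_{L^2(0,T)}^2+\|(\varphi,\psi)\|_{L^2(0,T;\X)}^2)$; since, by Proposition \ref{hiddenregularities}, the trace map $(\varphi^1,\psi^1)\mapsto\psi(L,\cdot)$ is bounded into $H^{\frac13}(0,T)$ and $H^{\frac13}(0,T)\hookrightarrow L^2(0,T)$ compactly, while the solution map into $L^2(0,T;\X)$ is compact by Aubin--Lions (the solutions being bounded in $\ZZ_T$ with time derivative in $L^2(0,T;(H^{-2}(0,L))^2)$), the identity of $N_T$ is a compact operator. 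Hence $N_T$ is finite dimensional, and it is invariant under the adjoint flow exactly as in Rosier \cite{rosier}. By the standard compactness--uniqueness lemma, \eqref{obineq1} holds provided $N_T=\{0\}$, so everything reduces to this unique continuation statement.

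Finally I would prove $N_T=\{0\}$, and it is here that the four--control configuration removes any restriction on $L$. If $N_T\neq\{0\}$, complexifying and restricting the (bounded, since $N_T$ is finite dimensional) infinitesimal generator of the adjoint evolution to $N_T$ produces an eigenpair $(\lambda,(\phi,\chi))$ with $(\phi,\chi)\neq0$ whose solution depends on $t$ like $e^{\lambda(t-T)}$, so the vanishing of $\mathcal{B}$ forces the \emph{spatial} traces to vanish, namely $\phi(0)+\tfrac{ab}{c}\chi(0)=a\phi(0)+\tfrac1c\chi(0)=0$ and $\phi'(L)+\tfrac{ab}{c}\chi'(L)=a\phi'(L)+\tfrac1c\chi'(L)=0$ (the case $\lambda=0$, a stationary solution, being disposed of directly). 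By \eqref{coef} these give $\phi(0)=\chi(0)=0$ and $\phi'(L)=\chi'(L)=0$, and feeding $\chi(0)=0$ into the boundary relations \eqref{linadjbound} yields in addition $\phi'(0)=\chi'(0)=0$ and $\phi''(0)=\chi''(0)=0$; that is, $(\phi,\chi)$ has complete zero Cauchy data at $x=0$. Since \eqref{coef} lets me solve the eigenvalue relation for $(\phi''',\chi''')$ in terms of $(\phi,\phi',\phi'',\chi,\chi',\chi'')$, the pair solves a first order linear ODE system in $x$ with vanishing data at $x=0$, whence $(\phi,\chi)\equiv0$, a contradiction. The main obstacle is precisely this last step: one must verify that the fractional factor $(-\Delta_t)^{\frac16}$ genuinely transfers the vanishing of the two observations at $x=0$ into the vanishing of the traces $\phi(0),\chi(0)$, and must set up the invariance and finite dimensionality of $N_T$ with care; the reward is that, because all six Cauchy data at $x=0$ are annihilated, no resonant ``critical length'' can occur and the conclusion holds for every $L>0$.
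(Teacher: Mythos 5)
Your proposal is correct and follows essentially the same route as the paper: the multiplier identity \eqref{e6} of Proposition \ref{prop3} combined with the hidden regularity of Proposition \ref{hiddenregularities} and the compact embeddings $H^1(0,L)\hookrightarrow L^2(0,L)\hookrightarrow H^{-2}(0,L)$ and $H^{\frac13}(0,T)\hookrightarrow L^2(0,T)$ yields the compactness--uniqueness reduction (the paper phrases it as a contradiction argument on a normalized sequence rather than via the finite-dimensional invariant subspace $N_T$, but this is the same mechanism), and the unique continuation step is settled exactly as you describe, by extracting an eigenfunction with full zero Cauchy data at $x=0$ and integrating the resulting ODE system, which is why no critical length appears. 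The subtlety you flag about $(-\Delta_t)^{\frac16}$ controlling the trace is present, and treated implicitly, in the paper's proof as well.
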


\begin{proof}
We argue by contradiction, as in \cite[Proposition 3.3]{rosier}, and suppose that \eqref{obineq1} does not hold. In this case, we obtain a sequence $\{(\varphi^1_n, \psi^1_n)\}_{n \in \N}$, satisfying
\begin{align}\label{e3}
1=\|(\varphi^1_n, \psi^1_n)\|_{\X}^2&\geq n\left\lbrace \left \|(-\Delta_t)^{\frac16}\left(\varphi_n(0,\cdot)+\frac{ab}{c}\psi_n(0,\cdot)\right)\right \|_{L^{2}(0,T)}^2 + \left  \|\varphi_{n,x}(L,\cdot)+\frac{ab}{c}\psi_{n,x}(L,\cdot)\right \|_{L^2(0,T)}^2 \right. \notag \\
&\left. +\left \|(-\Delta_t)^{\frac16}\left(a\varphi_n(0,\cdot)+\frac{1}{c}\psi_n(0,\cdot)\right)\right \|_{L^2(0,L)}^2 + \left \|a\varphi_{n,x}(L,\cdot)+\frac{1}{c}\psi_{n,x}(L,\cdot)\right \|_{L^2(0,T)}^2\right\rbrace.
\end{align}
Consequently, \eqref{e3} imply that
\begin{equation}\label{e4}
\begin{cases}
\varphi_n(0,\cdot)+\frac{ab}{c}\psi_n(0,\cdot) \rightarrow 0 & \text{in} \quad H^{\frac{1}{3}}(0,T), \\
a\varphi_n(0,\cdot)+\frac{1}{c}\psi_n(0,\cdot) \rightarrow 0 & \text{in} \quad H^{\frac{1}{3}}(0,T), \\
\varphi_{n,x}(L,\cdot)+\frac{ab}{c}\psi_{n,x}(L,\cdot) \rightarrow 0 & \text{in} \quad L^2(0,T), \\
a\varphi_{n,x}(L,\cdot)+\frac{1}{c}\psi_{n,x}(L,\cdot)  \rightarrow 0 & \text{in} \quad L^2(0,T),
\end{cases}
\end{equation}
as $n\rightarrow\infty$. Since $1-a^2b >0$, \eqref{e4} guarantees that the following converge hold
\begin{equation}\label{e4'}
\begin{cases}
\varphi_n(0,\cdot) \rightarrow 0, \quad \psi_n(0,\cdot) \rightarrow 0 &\text{in} \quad H^{\frac{1}{3}}(0,T),\\
\varphi_{n,x}(L,\cdot)\rightarrow 0,\quad \psi_{n,x}(L,\cdot) \rightarrow 0 &\text{in} \quad L^2(0,T),\\
\end{cases}
\end{equation}
as $n\rightarrow\infty$. The next steps are devoted to pass the strong limit in the left hand side of \eqref{e3}. First, observe that from Proposition \ref{hiddenregularities} we deduce that
$\{ (\varphi_n,\psi_n)\}_{n \in \N}$ is bounded in $L^2(0,T;(H^1(0,L))^2)$. Then, \eqref{linadj} implies that $\{ (\varphi_{t,n},\psi_{t,n})\}_{n \in \N}$ is bounded in $L^2(0,T; (H^{-2}(0,L))^2)$ and the compact embedding
\begin{equation*}
 H^1(0,L) \hookrightarrow L^2(0,L) \hookrightarrow H^{-2}(0,L)
\end{equation*}
 allows us to  conclude that $\{ (\varphi_n,\psi_n)\}_{n \in \N}$ is relatively compact in $L^2(0,T;\X)$. Consequently, we obtain a subsequence, still denoted by the same index $n$, satisfying
 \begin{equation}\label{e8}
(\varphi_n,\psi_n) \rightarrow (\varphi,\psi) \mbox{ in } L^2(0,T;\X), \mbox{ as } n\rightarrow\infty.
 \end{equation}
On the other hand, \eqref{hr4} and \eqref{e3} imply that the sequences
\[
\text{$\{ \varphi_n(0,\cdot)\}_{n\in\N}$\mbox{ and } $\{\psi_n(0,\cdot)\}_{n\in\N}$ are bounded in $H^{\frac13}(0,T)$.}
\]
Then, the following compact embedding
\begin{equation}\label{e15}
H^{\frac13}(0,T) \hookrightarrow L^2(0,T)
\end{equation}
guarantees that the above sequences are relatively compact in $L^2(0,T)$, that is, we obtain a subsequence, still denoted by the same index $n$, satisfying
\begin{equation}\label{e10}
\begin{cases}
\varphi_n(0,\cdot) \rightarrow \varphi(0,\cdot) \quad \text{in} \quad L^2(0,T), \\
\psi_n(0,\cdot) \rightarrow \psi(0,\cdot) \quad \text{in} \quad L^2(0,T),
\end{cases}
\end{equation}
as $n\rightarrow\infty$. Then, from \eqref{e4'} and \eqref{e10} we deduce that
\begin{equation*}
\varphi(0,\cdot)=\psi(0,\cdot)=0.
\end{equation*}
Moreover, \eqref{hr4}, \eqref{e3} and \eqref{e15} imply that $\{\varphi_n(L,t)\}_{n\in\N}$ and $\{\psi_n(L,t)\}_{n\in\N}$ are relatively compact in $L^2(0,T)$. Hence, we obtain a subsequence, still denoted by the same index, satisfying
\begin{equation}\label{e9}
 \begin{cases}
  \varphi_n(L,\cdot) \rightarrow \varphi(L,\cdot) \mbox{ in } L^2(0,T), \\
 \psi_n(L,\cdot) \rightarrow \psi(L,\cdot) \mbox{ in } L^2(0,T),
 \end{cases}
 \end{equation}
as $n\rightarrow\infty$. In addition, according to Proposition \ref{prop3}, we have
\begin{align*}
\|(\varphi^1_n,\psi^1_n)\|_{\X}^2 \leq& \frac{1}{T}\|(\varphi_n,\psi_n)\|_{L^2(0,T;\X)}+\frac{1}{2}\|\varphi_{n,x}(L,\cdot)\|_{L^2(0,T)}^2\\
&+\frac{b}{2c}\|\psi_{n,x}(L,\cdot)\|_{L^2(0,T)}^2  +\frac{br}{c^2}\|\psi_{n}(L,\cdot)\|_{L^2(0,T)}^2\\
&+\frac{1}{2}\left\|\varphi_{n,x}(L,\cdot)+\frac{ab}{c}\psi_{n,x}(L,\cdot)\right\|_{L^2(0,T)}^2+\frac{b}{2c}\left\|a\varphi_{n,x}(L,\cdot)+\frac{1}{c}\psi_{n,x}(L,\cdot)\right\|_{L^2(0,T)}^2.
\end{align*}
Then, from \eqref{e4}, \eqref{e4'}, \eqref{e8}  and \eqref{e9} we conclude that $\{(\varphi^1_n,\psi^1_n)\}_{n\in\N}$ is a Cauchy sequence in $\X$ and, therefore, we get
\begin{equation}\label{e16}
(\varphi^1_n,\psi^1_n) \rightarrow (\varphi^1,\psi^1) \mbox{ in } \X, \mbox{ as } n\rightarrow\infty.
\end{equation}
Thus, Proposition \ref{hiddenregularities} together with \eqref{e16} imply that
\begin{equation}\label{eeee1}
 \begin{cases}
  \varphi_{n,x}(L,\cdot) \rightarrow \varphi_x(L,\cdot) \mbox{ in } L^2(0,T), \\
 \psi_{n,x}(L,\cdot) \rightarrow \psi_x(L,\cdot) \mbox{ in } L^2(0,T)
 \end{cases}
 \end{equation}
and
 \begin{equation*}
 \begin{cases}
 \varphi_{n,xx}(L,\cdot)+\frac{ab}{c}\psi_{n,xx}(L,\cdot)  \rightarrow \varphi_{xx}(L,\cdot)+\frac{ab}{c}\psi_{xx}(L,\cdot)&  \mbox{ in } L^2(0,T), \\
  \varphi_{n,xx}(0,\cdot)+\frac{ab}{c}\psi_{n,xx}(0,\cdot)  \rightarrow \varphi_{xx}(0,\cdot)+\frac{ab}{c}\psi_{xx}(0,\cdot)&  \mbox{ in } L^2(0,T), \\
 a\varphi_{n,xx}(L,\cdot)+\frac{1}{c}\psi_{n,xx}(L,\cdot)+\frac{r}{c}\psi_{n}(L,\cdot)  \rightarrow a\varphi_{xx}(L,\cdot)+\frac{1}{c}\psi_{xx}(L,\cdot) +\frac{r}{c}\psi(L,\cdot) &\mbox{ in } L^2(0,T),\\
  a\varphi_{n,xx}(0,\cdot)+\frac{1}{c}\psi_{n,xx}(0,\cdot)+\frac{r}{c}\psi_{n}(0,\cdot)  \rightarrow a\varphi_{xx}(0,\cdot)+\frac{1}{c}\psi_{xx}(0,\cdot) +\frac{r}{c}\psi(0,\cdot) & \mbox{ in } L^2(0,T),
 \end{cases}
 \end{equation*}
as $n\rightarrow\infty$. Since $(\varphi_n,\psi_n)$ is a solution of the adjoint system, we obtain that
 \begin{equation*}
 \begin{cases}
\varphi_{xx}(L,\cdot)+\frac{ab}{c}\psi_{xx}(L,\cdot)  =0,\\
\varphi_{xx}(0,\cdot)+\frac{ab}{c}\psi_{xx}(0,\cdot)  =0,\\
a\varphi_{xx}(L,\cdot)+\frac{1}{c}\psi_{xx}(L,\cdot) +\frac{r}{c}\psi(L,\cdot)=0, \\
a\varphi_{xx}(0,\cdot)+\frac{1}{c}\psi_{xx}(0,\cdot) +\frac{r}{c}\psi(L,\cdot)=0.
 \end{cases}
 \end{equation*}
On the other hand, from \eqref{e4'} and \eqref{eeee1}, we have
\begin{equation*}
\varphi_x(L,\cdot)=\psi_x(L,\cdot)=0.
\end{equation*}
Finally, we obtain that $(\varphi,\psi)$ is a solution of
\begin{equation}\label{e11}
\begin{cases}
\varphi_t + \varphi_{xxx} + a \frac{ab}{c}\psi_{xxx}=0, & \text{in} \,\, (0,L)\times(0,T), \\
\psi_t  +\frac{r}{c}\psi_x+a\varphi_{xxx} +\frac{1}{c}\psi_{xxx} =0,  & \text{in} \,\, (0,L)\times(0,T), \\
a\varphi_{xx}(L,t)+\frac{1}{c}\psi_{xx}(L,t)+\frac{r}{c}\psi(L,t) =0, &  \text{in} \,\, (0,T), \\
a\varphi_{xx}(0,t)+\frac{1}{c}\psi_{xx}(0,t)+\frac{r}{c}\psi(0,t) =0, &  \text{in} \,\, (0,T), \\
\varphi_{xx} (L,t) +\frac{ab}{c}\psi_{xx} (L,t) =0,  &  \text{in} \,\, (0,T). \\
\varphi_{xx} (0,t) +\frac{ab}{c}\psi_{xx} (0,t) =0,  &  \text{in} \,\, (0,T). \\
\varphi_x(0,t)=\psi_x(0,t) =0, & \text{in} \,\, (0,T),\\
\varphi(x,T)= \varphi^1(x), \qquad \psi(x,T)= \psi^1(x),  &  \text{in} \,\, (0,L),
\end{cases}
\end{equation}
satisfying the additional boundary conditions
\begin{equation}\label{e12}
\varphi(0,t)=\psi(0,t)=\varphi_x(L,t)=\psi_x(L,t)=0  \,\, \text{ in } \,\,  (0,T)
\end{equation}
and
\begin{equation}\label{e13}
\|(\varphi^1,\psi^1)\|_{\X}=1.
\end{equation}
Observe that \eqref{e13} implies that the solutions of \eqref{e11}-\eqref{e12} can not be identically zero. However,  by Lemma \ref{lem}, one can conclude that $(\varphi,\psi)=(0,0)$, which drive us to a contradiction.\end{proof}

\begin{lemma}\label{lem}
For any $T > 0$, let $N_T$ denote the space of the initial states $(\varphi^1,\psi^1) \in \X$, such that the solution of \eqref{e11} satisfies \eqref{e12}. Then, $N_T=\{0\}$.
\end{lemma}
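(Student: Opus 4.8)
The plan is to argue by contradiction, following the strategy of Rosier~\cite{rosier}: if $N_T \neq \{0\}$, I would reduce the overdetermined evolution problem to a stationary (spectral) problem and show the latter has only the trivial solution. First I would recall the classical reduction. By the hidden regularity of Proposition~\ref{hiddenregularities} together with the compact embedding $H^1(0,L)\hookrightarrow L^2(0,L)$, the identity on $N_T$ is compact, so $\dim N_T<\infty$; moreover the autonomy of the system shows that the infinitesimal generator $A$ of the adjoint system \eqref{linadj}-\eqref{linadjbound} maps $N_T$ into itself. Complexifying $N_T$, the restriction $A|_{N_T}$ then possesses an eigenvalue $\lambda\in\C$ with an eigenfunction $0\neq(\phi,\chi)\in (H^3(0,L))^2$ satisfying $A(\phi,\chi)=\lambda(\phi,\chi)$ together with all the boundary conditions in \eqref{e11}-\eqref{e12}.

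Next I would write out the stationary system. Separation of variables turns \eqref{e11} into
\begin{equation*}
\begin{cases}
\phi''' + \dfrac{ab}{c}\chi''' = -\lambda \phi, \\
a\phi''' + \dfrac{1}{c}\chi''' = -\lambda\chi - \dfrac{r}{c}\chi',
\end{cases}
\end{equation*}
while the data at $x=0$ become, from \eqref{e12} and the condition $\varphi_x(0,t)=\psi_x(0,t)=0$ in \eqref{e11}, the relations $\phi(0)=\chi(0)=\phi'(0)=\chi'(0)=0$. The two remaining conditions of \eqref{e11} at $x=0$, namely $\phi''(0)+\frac{ab}{c}\chi''(0)=0$ and $a\phi''(0)+\frac{1}{c}\chi''(0)+\frac{r}{c}\chi(0)=0$, combined with $\chi(0)=0$, form a homogeneous $2\times 2$ linear system for $(\phi''(0),\chi''(0))$ whose matrix has determinant $(1-a^2b)/c\neq 0$ by \eqref{coef}. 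Hence $\phi''(0)=\chi''(0)=0$ as well, so the \emph{full} Cauchy data of $(\phi,\chi)$ vanish at the left endpoint.

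Finally I would invoke ODE uniqueness. Since $1-a^2b\neq 0$, the same matrix is invertible, so the stationary system can be solved for $(\phi''',\chi''')$ as a linear combination of $(\phi,\chi,\chi')$; writing $U=(\phi,\phi',\phi'',\chi,\chi',\chi'')$, this is a linear first-order system $U'=M(\lambda)U$ with constant coefficients. As $U(0)=0$ by the previous step, uniqueness for linear ODEs forces $U\equiv 0$, i.e. $(\phi,\chi)\equiv 0$, contradicting $(\phi,\chi)\neq 0$. Therefore $N_T=\{0\}$. Note that this conclusion uses only the vanishing Cauchy data at $x=0$ and holds for \emph{every} $L>0$, which is precisely why no critical length appears in this four-control configuration.

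The main obstacle I expect is the reduction step, that is, justifying rigorously that $N_T$ is finite dimensional and $A$-invariant (so that the extra homogeneous boundary conditions are preserved along the flow and a genuine eigenpair can be extracted). Once the spectral problem is in hand, the overdetermination at $x=0$ supplied by the four controls makes the ODE-uniqueness argument immediate, in contrast with Rosier's scalar case, where the conditions are split between the two endpoints and a delicate characteristic-root analysis is required to isolate the critical set.
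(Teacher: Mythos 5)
Your argument is correct and follows essentially the same route as the paper: reduce to an eigenfunction $(\varphi_0,\psi_0)$ of the adjoint generator, use the invertibility of the matrix $\left(\begin{smallmatrix}1 & ab/c\\ a & 1/c\end{smallmatrix}\right)$ (determinant $(1-a^2b)/c\neq 0$) to show all six Cauchy data vanish at $x=0$, and conclude by ODE uniqueness. Your write-up actually makes explicit the two points the paper leaves implicit (the finite-dimensionality/$A$-invariance of $N_T$ and the conversion to a first-order system $U'=M(\lambda)U$ hidden behind the phrase ``by straightforward computations''), but the substance is identical.
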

\begin{proof}
The proof uses the same arguments as those given in \cite{rosier}.

If $N_T\neq\{0\}$, the map  $(\varphi^1,\psi^1) \in N_T \rightarrow A(N_T)\subset \C N_T$
(where $\C N_T$ denote the complexification of $N_T$) has (at least) one eigenvalue. Hence, there exist $\lambda \in \C$ and $\varphi_0,\psi_0 \in  H^3(0,L)\setminus \{ 0 \}$, such that
\begin{equation*}
\begin{cases}\lambda\varphi_0+ \varphi'''_0 +\frac{ab}{c}\psi'''_0=0, & \text{in} \,\, (0,L),  \\
\lambda\psi_0 +\frac{r}{c}\psi'_0+a\varphi'''_{0} +\frac{1}{c}\psi'''_{0}=0, & \text{in} \,\, (0,L),  \\
\varphi'_0(x)=\psi'_0(x)=0, & \text{in} \,\, \{0,L\},\\
a\varphi''_{0}(x)+\frac{1}{c}\psi''_{0}(x)+\frac{r}{c}\psi_0(x) =0,  & \text{in} \,\, \{0,L\},\\
\varphi''_{0} (x) +\frac{ab}{c}\psi''_{0} (x) =0, & \text{in} \,\, \{0,L\}, \\
\varphi_0(0)=\psi_0(0)=0. \\\end{cases}
\end{equation*}
The notation $\{0,L\}$, used above, mean that the expression is applied in $0$ and $L$.

Since $1-a^2b >0$, the above system becomes
\begin{equation}\label{e14}
\begin{cases}
\lambda\varphi_0+ \varphi'''_0 + \frac{ab}{c}\psi'''_0=0, & \text{in} \,\, (0,L),  \\
\lambda\psi_0 +\frac{r}{c}\psi'_0+a\varphi'''_{0} +\frac{1}{c}\psi'''_{0}=0, & \text{in} \,\, (0,L),  \\
\varphi_0(0)=\varphi'_0(0)=\varphi''_0(0)=0, \\
\psi_0(0)=\psi'_0(0) =\psi''_0(0)=0.
\end{cases}
\end{equation}
By straightforward computations we see that $(\varphi_0,\psi_0)=(0,0)$ is a solution of \eqref{e14} for all $L > 0$, which concludes the proof of Lemma \ref{lem} and Proposition \ref{prop4}.
\end{proof}

The following theorem gives a positive answer for the control problem:

\begin{thm}\label{teo1}
Let $T > 0$ and $L>0$. Then, the system \eqref{ggln4}-\eqref{gglnb1} is exactly controllable in time T.
\end{thm}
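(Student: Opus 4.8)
The plan is to obtain exact controllability from the duality characterization of Lemma~\ref{equicontrol} together with the observability inequality of Proposition~\ref{prop4}, by the Hilbert Uniqueness Method. By the Remark preceding Case~1 it suffices to steer the zero initial state to an arbitrary target $(u^1,v^1)\in\X$, so throughout I take $u^0=v^0=0$. For final data $(\varphi^1,\psi^1)\in\X$, let $(\varphi,\psi)$ be the corresponding solution of the adjoint system \eqref{linadj}-\eqref{linadjbound}, whose boundary traces are controlled by Proposition~\ref{hiddenregularities}. The strategy is to produce, for each target, a distinguished adjoint trajectory $(\bar\varphi,\bar\psi)$ whose boundary traces furnish the controls.

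First I would introduce the symmetric bilinear form on $\X\times\X$
\begin{align*}
\Lambda\big((\varphi^1,\psi^1),(\bar\varphi^1,\bar\psi^1)\big) :={}&\int_0^T(-\Delta_t)^{\frac16}\Big(\varphi(0,\cdot)+\tfrac{ab}{c}\psi(0,\cdot)\Big)(-\Delta_t)^{\frac16}\Big(\bar\varphi(0,\cdot)+\tfrac{ab}{c}\bar\psi(0,\cdot)\Big)\,dt\\
&+\int_0^T\Big(\varphi_x(L,\cdot)+\tfrac{ab}{c}\psi_x(L,\cdot)\Big)\Big(\bar\varphi_x(L,\cdot)+\tfrac{ab}{c}\bar\psi_x(L,\cdot)\Big)\,dt\\
&+\int_0^T(-\Delta_t)^{\frac16}\Big(a\varphi(0,\cdot)+\tfrac1c\psi(0,\cdot)\Big)(-\Delta_t)^{\frac16}\Big(a\bar\varphi(0,\cdot)+\tfrac1c\bar\psi(0,\cdot)\Big)\,dt\\
&+\int_0^T\Big(a\varphi_x(L,\cdot)+\tfrac1c\psi_x(L,\cdot)\Big)\Big(a\bar\varphi_x(L,\cdot)+\tfrac1c\bar\psi_x(L,\cdot)\Big)\,dt,
\end{align*}
where $(\bar\varphi,\bar\psi)$ is the adjoint solution with final data $(\bar\varphi^1,\bar\psi^1)$. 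Proposition~\ref{hiddenregularities} (with $k=0,1$) places each trace in $H^{\frac13}(0,T)$, respectively $L^2(0,T)$, with norm controlled by $\|(\varphi^1,\psi^1)\|_\X$, so $\Lambda$ is continuous on $\X\times\X$; and the observability inequality \eqref{obineq1} is exactly the coercivity bound $\Lambda\big((\varphi^1,\psi^1),(\varphi^1,\psi^1)\big)\ge C^{-1}\|(\varphi^1,\psi^1)\|_\X^2$. By the Lax--Milgram theorem, for the bounded functional $(\varphi^1,\psi^1)\mapsto\int_0^L\big(u^1\varphi^1+v^1\psi^1\big)\,dx$ there is a unique $(\bar\varphi^1,\bar\psi^1)\in\X$ with
\begin{equation*}
\Lambda\big((\bar\varphi^1,\bar\psi^1),(\varphi^1,\psi^1)\big)=\int_0^L\big(u^1(x)\varphi^1(x)+v^1(x)\psi^1(x)\big)\,dx,\qquad\forall\,(\varphi^1,\psi^1)\in\X.
\end{equation*}

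I would then read off the controls from the minimizer $(\bar\varphi,\bar\psi)$ by setting
\begin{equation*}
\begin{cases}
h_0=(-\Delta_t)^{\frac13}\big(\bar\varphi(0,\cdot)+\tfrac{ab}{c}\bar\psi(0,\cdot)\big), & g_0=(-\Delta_t)^{\frac13}\big(a\bar\varphi(0,\cdot)+\tfrac1c\bar\psi(0,\cdot)\big),\\[1mm]
h_1=\bar\varphi_x(L,\cdot)+\tfrac{ab}{c}\bar\psi_x(L,\cdot), & g_1=a\bar\varphi_x(L,\cdot)+\tfrac1c\bar\psi_x(L,\cdot).
\end{cases}
\end{equation*}
Since $(-\Delta_t)^{\frac13}$ maps $H^{\frac13}(0,T)$ into $H^{-\frac13}(0,T)$ and the traces at $x=L$ lie in $L^2(0,T)$, one has $\vec h=(h_0,h_1,0),\ \vec g=(g_0,g_1,0)\in\HH_T$. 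Writing $(-\Delta_t)^{\frac13}=(-\Delta_t)^{\frac16}(-\Delta_t)^{\frac16}$ and interpreting the $H^{-\frac13}$--$H^{\frac13}$ dual pairings accordingly, the right-hand side of \eqref{ad2} evaluated at these controls coincides termwise with $\Lambda\big((\bar\varphi^1,\bar\psi^1),(\varphi^1,\psi^1)\big)$, hence with $\int_0^L(u^1\varphi^1+v^1\psi^1)\,dx$. Lemma~\ref{equicontrol} then guarantees that the solution of \eqref{ggln4}-\eqref{gglnb1} driven by these controls satisfies \eqref{finaldata}, which is the assertion.

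I expect the genuine analytic difficulty to be already absorbed into Proposition~\ref{prop4}, whose proof rests on the compactness argument and the unique continuation of Lemma~\ref{lem}; the remaining work here is bookkeeping. The points that require care are the consistent definition of the fractional operator $(-\Delta_t)^{s}$ on $(0,T)$ so that $(-\Delta_t)^{\frac13}\colon H^{\frac13}(0,T)\to H^{-\frac13}(0,T)$ is an isomorphism compatible with the duality pairing used in \eqref{ad2}, the weight $b/c$ appearing in the inner product of $\X$ (which only affects constants, since $b,c>0$), and verifying that the sign conventions make $\Lambda$ genuinely coercive so that Lax--Milgram applies directly rather than through a separate minimization.
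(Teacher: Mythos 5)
Your proposal is correct and follows essentially the same route as the paper: the paper also takes the controls \eqref{contr1} read off from the adjoint traces, uses Lemma~\ref{equicontrol} to identify the duality pairing with the quadratic form built from those traces, invokes the observability inequality of Proposition~\ref{prop4} for coercivity, and concludes by Lax--Milgram; your bilinear form $\Lambda$ is exactly the form $\left(\Gamma(\bar\varphi^1,\bar\psi^1),(\varphi^1,\psi^1)\right)$ that the paper estimates. The only cosmetic difference is that the paper phrases Lax--Milgram as invertibility of the controllability operator $\Gamma$ rather than solvability of the variational equation $\Lambda=\ell$, which is the same statement.
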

\begin{proof}
Let us denote by $\Gamma$ the linear and bounded map  defined by
\begin{equation*}
\begin{tabular}{r c c c}
$\Gamma :$ & $L^2(0, L) \times L^2(0, L)$        &  $\longrightarrow$ & $L^2(0, L) \times L^2(0, L)$ \\
           & $(\varphi^1(\cdot), \psi^1(\cdot))$ &  $\longmapsto$     & $\Gamma(\varphi^1(\cdot), \psi^1(\cdot))=(u(\cdot,T), v(\cdot,T))$,
\end{tabular}
\end{equation*}
where $(u,v)$ is the solution of \eqref{ggln4}-\eqref{gglnb1},  with
\begin{equation}\label{contr1}
\begin{cases}
h_0(t) = (-\Delta_t)^{\frac13}\left(\varphi(0,t)+\frac{ab}{c}\psi(0,t)\right), & h_1(t)= \varphi_x(L,t)+\frac{ab}{c}\psi_x(L,t), \\
g_0(t)=(-\Delta_t)^{\frac13}\left(a\varphi(0,t)+\frac{1}{c}\psi(0,t)\right),   & g_1(t)=a\varphi_x(L,t)+\frac{1}{c}\psi_x(L,t),
\end{cases}
\end{equation}
and $(\varphi,\psi)$ the solution of the system \eqref{linadj}-\eqref{linadjbound} with $\Delta_t= \partial^2_t$ and initial data $(\varphi^1,\psi^1)$.  According to Lemma \ref{equicontrol} and Proposition \ref{prop4}, we obtain
\begin{align*}
\left (\Gamma(\varphi^1, \psi^1), (\varphi^1, \psi^1) \right)_{(L^2(0,L))^2} = &\left\|\varphi_x(L,\cdot)+\frac{ab}{c}\psi_x(L,\cdot)\right\|_{L^2(0,T)}^2 + \left\|a\varphi_x(L,\cdot)+\frac{1}{c}\psi_x(L,\cdot)\right\|_{L^2(0,T)}^2 \\
&+\left((-\Delta_t)^{\frac13}\left(\varphi(0,\cdot)+\frac{ab}{c}\psi(0,\cdot)\right), \varphi(0,\cdot)+\frac{ab}{c}\psi(0,\cdot) \right)_{L^2(0,T)}  \\
&+ \left( (-\Delta_t)^{\frac13}\left(a\varphi(0,\cdot)+\frac{1}{c}\psi(0,\cdot)\right), a\varphi(0,\cdot)+\frac{1}{c}\psi(0,\cdot) \right)_{L^2(0,T)}  \\
=&\left\|\varphi_x(L,\cdot)+\frac{ab}{c}\psi_x(L,\cdot)\right\|_{L^2(0,T)}^2 + \left\|a\varphi_x(L,\cdot)+\frac{1}{c}\psi_x(L,\cdot)\right\|_{L^2(0,T)}^2 \\
&+\left\|(-\Delta_t)^{\frac16}\left(a\varphi(0,\cdot)+\frac{1}{c}\psi(0,\cdot)\right)\right\|_{L^2(0,T)}^2\\
&+ \left\|(-\Delta_t)^{\frac16}\left(\varphi(0,\cdot)+\frac{ab}{c}\psi(0,\cdot)\right)\right\|_{L^2(0,T)}^2 \\
\geq& C^{-1} \|(\varphi^1, \psi^1)\|_{\X}^2.
\end{align*}
Thus, by the Lax-Milgram theorem, $\Gamma$ is invertible. Consequently, for given $(u^1,v^1) \in (L^2(0, L))^2$, we can define $(\varphi^1,\psi^1) := \Gamma^{-1}( u^1,v^1)$ to solve the system \eqref{linadj}-\eqref{linadjbound} and get $(\varphi,\psi)  \in \ZZ_T$. Then, if $h_0(t)$,  $h_1(t)$, $g_0(t)$ and $g_1(t)$ are given by \eqref{contr1}, the corresponding solution $(u,v)$ of the system \eqref{ggln4}-\eqref{gglnb1}, satisfies
$$(u(\cdot,0),v(\cdot,0))=(0,0)\quad \text{and} \quad (u(\cdot,T),v(\cdot,T))=(u^1(\cdot),=v^1(\cdot)).$$
\end{proof}
\begin{remark}\label{remark1}
An important question is whether the exact controllability holds, in time $T>0$, when we consider the boundary condition with another configuration, for example,
\begin{equation}\label{gglnb1'}
\left\lbrace\begin{tabular}{l l l l}
$u_{xx}(0,t) = 0$ & $u_x(L,t) = h_1(t)$ & $u_{xx}(L,t) = h_2(t)$,& in $(0,T)$,\\
$v_{xx}(0,t) =0,$ & $v_x(L,t) = g_1(t),$ & $v_{xx}(L,t) = g_2(t)$,& in $(0,T)$.
\end{tabular}\right.
\end{equation}
Observe that, in this case it would be necessary to prove that the following observability inequality
\begin{align*}
\|(\varphi^1, \psi^1)\|_{\X}^2\leq& C\left\lbrace\left  \|(-\Delta_t)^{\frac{1}{6}}\left(\varphi(L,\cdot)+\frac{ab}{c}\psi(L,\cdot)\right)\right \|_{L^2(0,T)}^2 +\left  \|\varphi_x(L,\cdot)+\frac{ab}{c}\psi_x(L,\cdot)\right \|_{L^2(0,T)}^2 \right. \notag \\
&\left. +\left \|(-\Delta_t)^{\frac{1}{6}}\left(a\varphi(L,\cdot)+\frac{1}{c}\psi(L,\cdot)\right)\right \|_{L^2(0,T)}^2 +\left  \|a\varphi_x(L,\cdot)+\frac{1}{c}\psi_x(L,\cdot)\right \|_{L^2(0,T)}^2\right\rbrace,
\end{align*}
holds for any $(\varphi^1,\psi^1)$ in $\X$, where $(\varphi,\psi)$ is  solution of \eqref{linadj}-\eqref{linadjbound} with initial data $(\varphi^1,\psi^1)$.
It can be done using Proposition \ref{hiddenregularities}  together with the contradiction argument used in the proof of Proposition \ref{prop4}. Thus, the next result about the exact controllability of the system \eqref{ggln4}-\eqref{gglnb1'} also holds:
\begin{thm}\label{teo2}
Let $T > 0$ and $L>0$. Then, the system \eqref{ggln4}-\eqref{gglnb1'} is exactly controllable in time T.
\end{thm}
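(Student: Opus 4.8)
The plan is to reproduce the argument of Theorem \ref{teo1} almost verbatim, simply transferring every trace from the endpoint $x=0$ to the endpoint $x=L$, and to exploit the fortunate fact that the energy identity of Proposition \ref{prop3} already involves boundary data only at $x=L$. First I would record the duality identity analogous to \eqref{ad2}. Multiplying \eqref{ggln4} by the solution $(\varphi,\psi)$ of the adjoint system \eqref{linadj}-\eqref{linadjbound} and integrating by parts as in the computation preceding \eqref{linadj}, now with $h_0=g_0=0$ as prescribed by \eqref{gglnb1'}, every boundary contribution cancels except the four carried by the active controls $h_1=u_x(L,\cdot)$, $h_2=u_{xx}(L,\cdot)$, $g_1=v_x(L,\cdot)$, $g_2=v_{xx}(L,\cdot)$; the $u(L),v(L),u_x(0),v_x(0)$ terms die by \eqref{linadjbound} and the $x=0$ control terms die because $u_{xx}(0)=v_{xx}(0)=0$. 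This shows that exact controllability is equivalent to the requirement that, for all $(\varphi^1,\psi^1)\in\X$,
\begin{align*}
\int_0^L (u^1\varphi^1+v^1\psi^1)\,dx =& \int_0^T h_1\Big(\varphi_x(L,\cdot)+\tfrac{ab}{c}\psi_x(L,\cdot)\Big)dt - \int_0^T h_2\Big(\varphi(L,\cdot)+\tfrac{ab}{c}\psi(L,\cdot)\Big)dt\\
&+\int_0^T g_1\Big(a\varphi_x(L,\cdot)+\tfrac1c\psi_x(L,\cdot)\Big)dt - \int_0^T g_2\Big(a\varphi(L,\cdot)+\tfrac1c\psi(L,\cdot)\Big)dt.
\end{align*}

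Second, I would prove the observability inequality announced in Remark \ref{remark1} by the contradiction scheme of Proposition \ref{prop4}. Assuming it fails, I normalize a sequence $(\varphi^1_n,\psi^1_n)$ with $\|(\varphi^1_n,\psi^1_n)\|_\X=1$ whose four right-hand traces tend to zero. Because $1-a^2b>0$ by \eqref{coef}, the two $2\times2$ linear systems in these traces have determinant $\tfrac1c(1-a^2b)\neq0$ and decouple, forcing $\varphi_n(L,\cdot),\psi_n(L,\cdot)\to0$ in $H^{\frac13}(0,T)$ and $\varphi_{n,x}(L,\cdot),\psi_{n,x}(L,\cdot)\to0$ in $L^2(0,T)$. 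Proposition \ref{hiddenregularities} bounds $(\varphi_n,\psi_n)$ in $L^2(0,T;(H^1)^2)$, and together with the equation and the compact chain $H^1\hookrightarrow L^2\hookrightarrow H^{-2}$ yields a subsequence converging in $L^2(0,T;\X)$. Inserting these vanishing $L$-traces and the interior limit into Proposition \ref{prop3}—which involves only traces at $x=L$—upgrades this to strong convergence $(\varphi^1_n,\psi^1_n)\to(\varphi^1,\psi^1)$ in $\X$ with $\|(\varphi^1,\psi^1)\|_\X=1$.

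Third, the limit $(\varphi,\psi)$ solves \eqref{linadj}-\eqref{linadjbound} with the extra data $\varphi(L,\cdot)=\psi(L,\cdot)=\varphi_x(L,\cdot)=\psi_x(L,\cdot)=0$. Combining $\varphi(L)=\psi(L)=0$ with the relations $a\varphi_{xx}(L)+\tfrac1c\psi_{xx}(L)+\tfrac rc\psi(L)=0$ and $\varphi_{xx}(L)+\tfrac{ab}{c}\psi_{xx}(L)=0$ from \eqref{linadjbound}, and decoupling once more by $1-a^2b>0$, gives $\varphi_{xx}(L,\cdot)=\psi_{xx}(L,\cdot)=0$. Thus both components carry full vanishing Cauchy data at $x=L$; performing the eigenvalue reduction of Lemma \ref{lem} at the endpoint $x=L$ and invoking uniqueness for the nondegenerate third-order ODE system (nondegenerate precisely because $1-a^2b>0$) forces $(\varphi,\psi)\equiv0$ for every $L>0$, contradicting unit norm—this is why no critical length arises in this configuration. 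Exact controllability then follows from Lax--Milgram exactly as in Theorem \ref{teo1}, taking $h_1=\varphi_x(L,\cdot)+\tfrac{ab}{c}\psi_x(L,\cdot)$, $g_1=a\varphi_x(L,\cdot)+\tfrac1c\psi_x(L,\cdot)$, $h_2=-(-\Delta_t)^{\frac13}\big(\varphi(L,\cdot)+\tfrac{ab}{c}\psi(L,\cdot)\big)$ and $g_2=-(-\Delta_t)^{\frac13}\big(a\varphi(L,\cdot)+\tfrac1c\psi(L,\cdot)\big)$, the minus signs being chosen so that the induced operator is coercive by the observability inequality. The main obstacle is the strong-convergence step—checking that Proposition \ref{prop3} genuinely closes the compactness argument with all data at $x=L$, and that the endpoint Cauchy data truly vanishes—where the hypothesis $1-a^2b>0$ is used repeatedly.
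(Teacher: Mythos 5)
Your proposal is correct and follows essentially the same route the paper takes: the paper only sketches this case in Remark \ref{remark1}, asserting that the observability inequality with traces at $x=L$ follows from Proposition \ref{hiddenregularities} and the contradiction argument of Proposition \ref{prop4}, and you have filled in exactly those steps, including the key points that the $2\times 2$ trace systems decouple because $1-a^2b>0$ and that the limit eigenfunction acquires full vanishing Cauchy data at $x=L$, so no critical length appears. Your sign choice for $h_2,g_2$ in the HUM map is also the right one for coercivity.
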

\end{remark}

\subsubsection{Case 2} We consider the following boundary conditions:
\begin{equation}\label{gglnb2}
\left\lbrace\begin{tabular}{l l l l}
$u_{xx}(0,t) = 0$ & $u_x(L,t) = h_1(t)$ & $u_{xx}(L,t) = 0$,& in $(0,T)$,\\
$v_{xx}(0,t) =g_0(t),$ & $v_x(L,t) = g_1(t),$ & $v_{xx}(L,t) = g_2(t)$,& in $(0,T)$.
\end{tabular}\right.
\end{equation}
First, as in subsection above, we give an equivalent condition for the exact controllability property. It can be done using the same idea of the proof of Lemma \ref{equicontrol}.
\begin{lemma}\label{equicontrol2}
For any $(u^1,v^1)$ in $\X$, there exist four controls $\overrightarrow{h}=(0,h_1,0)$ and $\overrightarrow{g}=(g_0,g_1,g_2)$ in $\HH_T$, such that the solution $(u,v)$ of \eqref{ggln4}-\eqref{gglnb2} satisfies \eqref{finaldata} if and only if
\begin{align}
\int_0^L(u^1(x)\varphi^1(x)+v^1(x)\psi^1(x))dx=&\int_0^T g_0(t)\left(a\varphi(0,t)+\frac{1}{c}\psi(0,t)\right)dt\nonumber\\
&+\int_0^t g_1(t)\left(a\varphi_x(L,t)+\frac{1}{c}\psi_x(L,t)\right)dt\nonumber \\
&-\int_0^Tg_2(t)\left(a\varphi(L,t)+\frac{1}{c}\psi(L,t)\right)dt\label{ad3} \\
&+\int_0^T h_1(t)\left(\varphi_x(L,t)+\frac{ab}{c}\psi_x(L,t)\right)dt\nonumber,
\end{align}
for any $(\varphi^1,\psi^1)$ in $\X$, where $(\varphi,\psi)$ is the solution of the backward system \eqref{linadj}-\eqref{linadjbound}.
\end{lemma}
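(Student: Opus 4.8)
The plan is to establish the equivalence by the transposition (duality) method, exactly parallel to the proof of Lemma \ref{equicontrol}; the only genuine difference is the bookkeeping of which boundary terms survive under the new configuration \eqref{gglnb2}. Invoking the reduction to zero initial data from the preceding Remark, I may assume $u^0=v^0=0$, so that reaching the target means $(u(\cdot,T),v(\cdot,T))=(u^1,v^1)$. The starting point is the integration-by-parts identity derived just before \eqref{linadj}: for arbitrary control inputs, I let $(u,v)$ solve \eqref{ggln4}-\eqref{gglnb2} and $(\varphi,\psi)$ solve the backward system \eqref{linadj}-\eqref{linadjbound} with final data $(\varphi^1,\psi^1)$, pair the two systems, and integrate by parts over $(0,L)\times(0,T)$.

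In that identity the two interior double integrals vanish because $(u,v)$ satisfies \eqref{ggln4} and $(\varphi,\psi)$ satisfies \eqref{linadj}, and the initial-time pairing vanishes since $u^0=v^0=0$. It then remains to track the boundary terms. Imposing the forward conditions \eqref{gglnb2}, namely $u_{xx}(0,t)=u_{xx}(L,t)=0$ together with the four control traces $h_1,g_0,g_1,g_2$, and the adjoint conditions \eqref{linadjbound}, namely $\varphi_x(0,\cdot)=\psi_x(0,\cdot)=0$, the relations $\varphi_{xx}+\frac{ab}{c}\psi_{xx}=0$ at $x=0,L$, and the relations $a\varphi_{xx}+\frac{1}{c}\psi_{xx}+\frac{r}{c}\psi=0$ at $x=0,L$, every term carrying a homogeneous trace drops out. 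The survivors are precisely the term $\int_0^T h_1(\varphi_x(L,\cdot)+\frac{ab}{c}\psi_x(L,\cdot))\,dt$ from $u_x(L,t)=h_1$, the terms $\int_0^T g_0(a\varphi(0,\cdot)+\frac{1}{c}\psi(0,\cdot))\,dt$ and $\int_0^T g_1(a\varphi_x(L,\cdot)+\frac{1}{c}\psi_x(L,\cdot))\,dt$ exactly as in Case 1, and the genuinely new term $-\int_0^T g_2(a\varphi(L,\cdot)+\frac{1}{c}\psi(L,\cdot))\,dt$ produced by $v_{xx}(L,t)=g_2$. This yields a master identity equating $\int_0^L(u(\cdot,T)\varphi^1+v(\cdot,T)\psi^1)\,dx$ with the right-hand side of \eqref{ad3}, valid for every $(\varphi^1,\psi^1)\in\X$.

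I then convert this into the stated equivalence. The master identity holds unconditionally; since the condition $(u(\cdot,T),v(\cdot,T))=(u^1,v^1)$ in $(L^2(0,L))^2$ is equivalent to $\int_0^L(u(\cdot,T)\varphi^1+v(\cdot,T)\psi^1)\,dx=\int_0^L(u^1\varphi^1+v^1\psi^1)\,dx$ for all $(\varphi^1,\psi^1)$ (the $L^2\times L^2$ pairing being nondegenerate on $\X$), the target is reached if and only if the right-hand side of the master identity equals $\int_0^L(u^1\varphi^1+v^1\psi^1)\,dx$ for all test data, which is precisely \eqref{ad3}. The one point demanding care beyond Case 1 is the legitimacy of the new boundary pairing $-\int_0^T g_2(a\varphi(L,\cdot)+\frac{1}{c}\psi(L,\cdot))\,dt$, since $g_2\in H^{-\frac13}(0,T)$; here I appeal to the hidden-regularity estimate \eqref{hr4} of Proposition \ref{hiddenregularities} with $k=0$, which places $\varphi(L,\cdot),\psi(L,\cdot)$ in $H^{\frac13}(0,T)$ and makes the duality pairing well defined (the terms in $g_0$, $g_1$, $h_1$ are handled identically, matching their $H^{-\frac13}$ and $L^2$ regularity). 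I expect this justification of the boundary pairings, rather than any new computation, to be the only mild obstacle, as the underlying algebra is identical to that of Lemma \ref{equicontrol}.
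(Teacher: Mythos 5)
Your proposal is correct and follows essentially the same route as the paper, which simply pairs the forward system with the adjoint system, integrates by parts, and applies the boundary conditions \eqref{gglnb2} and \eqref{linadjbound} exactly as in Lemma \ref{equicontrol}; your bookkeeping of the surviving boundary terms (including the new $g_2$ term with its minus sign) matches \eqref{ad3}. The additional remark that the $g_0,g_2\in H^{-\frac13}(0,T)$ pairings are legitimized by the hidden regularity of Proposition \ref{hiddenregularities} is a point the paper leaves implicit, and is a welcome clarification rather than a deviation.
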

To prove the exact controllability property, it suffices to prove the following observability inequality:
\begin{prop}\label{prop5}
Let $T > 0$ and $L\in (0,\infty)\setminus  \FF_r$, where $\FF_r$ is given by \eqref{critical_f}. Then, there exists a constant $C (T,L) > 0$, such that
\begin{align*}
\|(\varphi^1, \psi^1)\|_{\X}^2\leq& C\left\lbrace \left  \|(-\Delta_t)^{\frac{1}{6}}\left( a\varphi(0,\cdot)+\frac{1}{c}\psi(0,\cdot)\right)\right \|_{L^2(0,T)}^2 +\left  \|\varphi_x(L,\cdot)+\frac{ab}{c}\psi_x(L,\cdot)\right \|_{L^2(0,T)}^2 \right. \notag \\
&\left. +\left \|(-\Delta_t)^{\frac{1}{6}}\left(a\varphi(L,\cdot)+\frac{1}{c}\psi(L,\cdot)\right)\right \|_{L^2(0,T)}^2 +\left  \|a\varphi_x(L,\cdot)+\frac{1}{c}\psi_x(L,\cdot)\right \|_{L^2(0,T)}^2\right\rbrace,
\end{align*}
for any $(\varphi^1,\psi^1)$ in $\X$, where $(\varphi,\psi)$ is  solution of \eqref{linadj}-\eqref{linadjbound} with initial data $(\varphi^1,\psi^1)$, where $\Delta_t :=\partial_t^2$.
\end{prop}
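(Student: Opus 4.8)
The plan is to run the same contradiction argument as in the proof of Proposition~\ref{prop4}, the essential new feature being that the concluding unique continuation step survives only when $L\notin\FF_r$. Assume the inequality fails; then there is a sequence $\{(\varphi^1_n,\psi^1_n)\}_{n\in\N}$ with $\|(\varphi^1_n,\psi^1_n)\|_{\X}=1$ along which the four boundary terms on the right-hand side tend to $0$. Since $1-a^2b>0$, the linear map $(\xi,\eta)\mapsto(\xi+\frac{ab}{c}\eta,\,a\xi+\frac1c\eta)$ has determinant $\frac{1-a^2b}{c}\neq0$, so the vanishing of the two derivative traces at $x=L$ forces $\varphi_{n,x}(L,\cdot)\to0$ and $\psi_{n,x}(L,\cdot)\to0$ in $L^2(0,T)$, while the remaining two terms give $a\varphi_n(0,\cdot)+\frac1c\psi_n(0,\cdot)\to0$ and $a\varphi_n(L,\cdot)+\frac1c\psi_n(L,\cdot)\to0$ in $H^{\frac13}(0,T)$.

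I would then reproduce the compactness step of Proposition~\ref{prop4}. By Proposition~\ref{hiddenregularities} the sequence $\{(\varphi_n,\psi_n)\}$ is bounded in $L^2(0,T;(H^1(0,L))^2)$ and, through \eqref{linadj}, $\{(\varphi_{n,t},\psi_{n,t})\}$ is bounded in $L^2(0,T;(H^{-2}(0,L))^2)$; the compact chain $H^1\hookrightarrow L^2\hookrightarrow H^{-2}$ yields a subsequence converging strongly in $L^2(0,T;\X)$, and the hidden trace regularity makes the endpoint traces relatively compact in $L^2(0,T)$, so the limits above are the traces of the limit $(\varphi,\psi)$. Feeding this into the identity of Proposition~\ref{prop3}, exactly as in Case~1, upgrades the convergence to $(\varphi^1_n,\psi^1_n)\to(\varphi^1,\psi^1)$ strongly in $\X$ with $\|(\varphi^1,\psi^1)\|_{\X}=1$; the limit $(\varphi,\psi)$ solves \eqref{linadj}-\eqref{linadjbound} and, in addition, satisfies $\varphi_x(L,\cdot)=\psi_x(L,\cdot)=0$ together with $a\varphi(0,\cdot)+\frac1c\psi(0,\cdot)=0$ and $a\varphi(L,\cdot)+\frac1c\psi(L,\cdot)=0$.

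It remains to show that, for $L\notin\FF_r$, this overdetermined homogeneous system admits only $(\varphi,\psi)=(0,0)$. Arguing as in Lemma~\ref{lem}, the space of admissible initial data is finite dimensional and invariant under the generator, hence nontrivial only if it contains an eigenfunction: there would exist $\lambda\in\C$ and $(\varphi_0,\psi_0)\in(H^3(0,L))^2\setminus\{0\}$ solving the stationary version of \eqref{linadj} with all of the above boundary conditions. I would resolve this constant-coefficient sixth-order eigenvalue problem by the Fourier/Paley--Wiener technique of Rosier and of Caicedo \textit{et al.}: after diagonalizing as in Proposition~\ref{prop1} into the two dispersive branches with coefficients $\alpha_\pm$, one writes the six characteristic exponents, imposes the boundary conditions, and computes the resulting determinant. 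Its vanishing is equivalent to a resonance among the exponents, which---after an elementary but lengthy reduction---holds exactly when $L$ belongs to one of the two families in \eqref{critical_f}, namely $2\pi k\sqrt{(1-a^2b)/r}$ or the family governed by the quadratic form $\alpha(k,l,m,n,s)$. Thus a nonzero eigenfunction forces $L\in\FF_r$, and for $L\notin\FF_r$ we obtain $(\varphi,\psi)=(0,0)$, contradicting $\|(\varphi^1,\psi^1)\|_{\X}=1$.

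The crux---and the only genuinely new difficulty relative to Case~1---is this last spectral computation. In Proposition~\ref{prop4} the three pieces of boundary data concentrated at $x=0$ pinned $\varphi_0,\psi_0$ and their first two derivatives to zero at a single point, so ODE uniqueness killed the solution for every $L$; here the conditions are distributed between the two endpoints and no such collapse occurs, leaving a genuine boundary-value problem whose solvability is obstructed precisely on $\FF_r$. Keeping track of how the two branch coefficients $\alpha_\pm$ enter the characteristic roots, and extracting the five-parameter form $\alpha$, is where essentially all of the technical effort is concentrated.
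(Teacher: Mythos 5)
Your compactness/contradiction skeleton coincides with the paper's: the same reduction via Proposition \ref{hiddenregularities}, the Aubin--Lions-type chain $H^1\hookrightarrow L^2\hookrightarrow H^{-2}$, the upgrade to strong convergence of $(\varphi^1_n,\psi^1_n)$ through Proposition \ref{prop3}, and the passage to the overdetermined system \eqref{e17}--\eqref{e17'} followed by the finite-dimensionality/eigenfunction reduction of Rosier. Up to that point the proposal is correct and essentially identical to the paper.

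The gap is in the only step that actually distinguishes this case from Case~1: the claim that the overdetermined eigenvalue problem \eqref{e18} has a nontrivial solution precisely when $L\in\FF_r$. You assert this ``after an elementary but lengthy reduction'' without carrying it out, yet this computation is the entire content of the proposition (it is what \emph{defines} $\FF_r$). Moreover, the route you sketch --- diagonalizing into the two dispersive branches with coefficients $\alpha_\pm$ and reading off six characteristic exponents --- does not straightforwardly apply here: the transport term $\frac{r}{c}\psi_x$ is not diagonalized by the change of variables of Proposition \ref{prop1} and re-couples the two branches at first order, so the characteristic polynomial is not a product of two cubics. The paper instead works with the coupled system directly: taking $\hat\varphi(\xi)=\int_0^L e^{-ix\xi}\varphi\,dx$, using the boundary conditions to reduce the boundary contributions to the two scalars $\alpha=\varphi(0)+\frac{ab}{c}\psi(0)$ and $\beta=-\varphi(L)-\frac{ab}{c}\psi(L)$, and eliminating $\hat\varphi$ to get
$\hat\psi(\xi)=-ac(i\xi)^5(\alpha+\beta e^{-iL\xi})/\bigl((1-a^2b)(i\xi)^6+r(i\xi)^4+(c+1)\lambda(i\xi)^3+r\lambda(i\xi)+c\lambda^2\bigr)$.
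Paley--Wiener then forces the six roots of this sextic $P$ to be roots of $\alpha+\beta e^{-iL\xi}$, i.e.\ to lie in an arithmetic-progression configuration $\xi_{j+1}=\xi_j+\frac{2\pi}{L}(\cdot)$; the symmetric-function relations \eqref{eqq6}--\eqref{eqq7} are what produce the quadratic form $\alpha(k,l,m,n,s)$, and the degenerate case $\xi_0=0$ (hence $p=0$) produces the separate family $2\pi k\sqrt{(1-a^2b)/r}$. Your proposal omits this degenerate case entirely and would not recover the first component of $\FF_r$. Without performing this Paley--Wiener analysis (or an equivalent determinant computation on the correct, non-factoring sextic), the proof is incomplete at its decisive point.
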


\begin{proof}
 We proceed as in the proof of Proposition \ref{prop4} using the contradiction argument. Therefore, we will summarize it. Firstly, we show that the sequences $\{(\varphi^1_n, \psi^1_n)\}_{n \in \N}$, $$\{a\varphi_n(0,\cdot) +\frac{1}{c}\psi_n(0,\cdot)\}_{n\in \N},$$ $$\{a\varphi_n(L,\cdot) +\frac{1}{c}\psi_n(L,\cdot)\}_{n\in \N},$$ $$\{a\varphi_{n,x}(L,\cdot) +\frac{1}{c}\psi_{n,x}(L,\cdot)\}_{n\in \N}$$ and $$\{\varphi_{n,x}(L,\cdot) +\frac{ab}{c}\psi_{n,x}(L,\cdot)\}_{n\in \N},$$  are relatively compact in $\X$ and $L^2(0,T;\X)$, respectively. Next, we proceed as in the proof of Proposition \ref{prop4} to get that
$$a\varphi_n(0,\cdot) +\frac{1}{c}\psi_n(0,\cdot)\rightarrow0,$$ $$a\varphi_n(L,\cdot) +\frac{1}{c}\psi_n(L,\cdot)\rightarrow0,$$ $$\varphi_{n,x}(L,\cdot)\rightarrow0,\,\,\psi_x(L,\cdot)\rightarrow0,$$
as $n\rightarrow\infty$, and $$||(\varphi,\psi)||_{(L^2(0,L))^2}=1.$$
Finally, combining the hidden regularity of the solutions of the adjoint system \eqref{hr4} and the compact embedding $H^{\frac13}(0,T)\hookrightarrow L^2(0,T)$, we conclude that $(\varphi,\psi)$ satisfies
\begin{equation}\label{e17}
\begin{cases}
\varphi_t + \varphi_{xxx} +\frac{ab}{c}\psi_{xxx}=0 & \mbox{ in }(0,L)\times(0,T),\\
\psi_t  +\frac{r}{c}\psi_x+a\varphi_{xxx} +\frac{1}{c}\psi_{xxx} =0 & \mbox{ in } (0,L)\times(0,T), \\
\varphi_{xx}(L,t) +\frac{ab}{c}\psi_{xx}(L,t)=0 & \mbox{ in }(0,T),\\
\varphi_{xx}(0,t) +\frac{ab}{c}\psi_{xx}(0,t)=0 & \mbox{ in }(0,T),\\
a\varphi_{xx}(L,t) +\frac{1}{c}\psi_{xx}(L,t) +\frac{r}{c}\psi(L,t)=0 & \mbox{ in }(0,T),\\
a\varphi_{xx}(0,t) +\frac{1}{c}\psi_{xx}(0,t) +\frac{r}{c}\psi(0,t)=0 & \mbox{ in }(0,T),\\
\varphi_x(0,t)=\psi_x(0,t)=0 & \mbox{ in }(0,T),\\
\varphi(x,T)= \varphi^1(x), \,\,
\psi(x,T)= \psi^1(x) &  \mbox{ in } (0,L)
\end{cases}
\end{equation}
and
\begin{equation}\label{e17'}
\begin{cases}
a\varphi(L,t) +\frac{1}{c}\psi(L,t)=0 & \mbox{ in }(0,T), \\
a\varphi(0,t) +\frac{1}{c}\psi(0,t)=0 & \mbox{ in }(0,T), \\
\varphi_x(L,t)=\psi_x(L,t)=0 & \mbox{ in }(0,T), \\
\|(\varphi,\psi)\|_{\X} = 1.
\end{cases}
\end{equation}

Notice that  the solutions of \eqref{e17}-\eqref{e17'} can not be identically zero. Therefore, from Lemma \ref{lem1}, one can conclude that $(\varphi,\psi)=(0,0)$, which drive us to a contradiction.
\end{proof}
\begin{lemma}\label{lem1}
For any $T > 0$, let $N_T$ denote the space of the initial states $(\varphi^1,\psi^1) \in \X$, such that the solution of \eqref{e17} satisfies \eqref{e17'}. Then, for $L \in (0,\infty)\setminus \FF_r$,  $N_T=\{0\}$.
\end{lemma}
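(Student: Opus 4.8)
The plan is to follow the spectral-reduction strategy of Rosier \cite{rosier} already used in Lemma \ref{lem}, but the essential new difficulty is that the boundary conditions \eqref{e17'} are now distributed between $x=0$ and $x=L$. Consequently the Cauchy-uniqueness shortcut that killed $N_T$ in Lemma \ref{lem} (where all the data sat at $x=0$) is unavailable, and a genuine resonance analysis producing the critical set $\FF_r$ is forced upon us. First I would observe that if $N_T\neq\{0\}$, then $N_T$ is finite dimensional and invariant under the operator $A$ associated with \eqref{e17}; hence the restriction of $A$ to the complexification $\C N_T$ possesses an eigenvalue $\lambda\in\C$ with eigenfunction $(\varphi_0,\psi_0)\in(H^3(0,L))^2\setminus\{(0,0)\}$ solving
\begin{equation*}
\begin{cases}
\lambda\varphi_0+\varphi_0'''+\frac{ab}{c}\psi_0'''=0, & \text{in }(0,L),\\
\lambda\psi_0+\frac{r}{c}\psi_0'+a\varphi_0'''+\frac{1}{c}\psi_0'''=0, & \text{in }(0,L),
\end{cases}
\end{equation*}
subject to the ten boundary relations obtained by evaluating \eqref{e17}--\eqref{e17'} at the two endpoints. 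The goal is to prove that no such $\lambda$ can exist once $L\notin\FF_r$.

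The first family in \eqref{critical_f} comes from the degenerate value $\lambda=0$. Since $1-a^2b>0$, the first equation gives $\varphi_0'''=-\frac{ab}{c}\psi_0'''$, and substitution into the second reduces the pair to the single equation $(1-a^2b)\psi_0'''+r\psi_0'=0$, whose solutions are spanned by $1,\cos(\omega x),\sin(\omega x)$ with $\omega=\sqrt{r/(1-a^2b)}$. Imposing the boundary conditions then forces $\omega L\in 2\pi\N^*$, that is, $L\in\{2\pi k\sqrt{(1-a^2b)/r}:k\in\N^*\}$; hence for $L$ outside this set the case $\lambda=0$ yields only $(\varphi_0,\psi_0)=(0,0)$.

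For $\lambda\neq 0$ I would pass to the characteristic analysis. Using $1-a^2b>0$ to invert the matrix acting on $(\varphi_0''',\psi_0''')$ and seeking exponential modes $(\varphi_0,\psi_0)=(A,B)e^{\mu x}$, one obtains a characteristic polynomial $P(\mu,\lambda)=0$ of degree six in $\mu$; its six roots $\mu_1(\lambda),\dots,\mu_6(\lambda)$ span the solution space, and the ten boundary conditions become a homogeneous linear system for the coefficients whose determinant, a transcendental function of $L$, $\lambda$ and the $\mu_j$, must vanish. Equivalently, and more transparently, I would extend $(\varphi_0,\psi_0)$ by zero and use the Paley--Wiener transform as in \cite{rosier,caicedo_caspistrano_zhang_2015}: the transforms are entire of exponential type $L$, the equation makes them rational, and requiring the spurious poles at the roots of the symbol to cancel converts the problem into finitely many relations among the quantities $e^{\mu_j(\lambda)L}$. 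Carrying out this reduction shows that these relations are compatible with a nonzero $(\varphi_0,\psi_0)$ precisely when the root spacings resonate with $L$, and the resulting condition is exactly $L\in\{\pi\sqrt{(1-a^2b)\alpha(k,l,m,n,s)/(3r)}\}$, the second family in \eqref{critical_f}. Combining the two cases yields $N_T=\{0\}$ for $L\in(0,\infty)\setminus\FF_r$.

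The main obstacle is precisely this last computation. In contrast with Case 1, the first-order coupling $\frac{r}{c}\psi_0'$ prevents the diagonalization of the dispersive part from decoupling the system into two scalar KdV operators, so all six characteristic roots must be followed simultaneously as functions of $\lambda$ and their pairwise differences related to the geometric period $2\pi/L$. Extracting from the transcendental resonance relations the explicit quadratic form $\alpha(k,l,m,n,s)$ in five integer parameters, and verifying that no value of $\lambda$ can produce a nontrivial eigenfunction when $L\notin\FF_r$, is the delicate heart of the argument; the remaining steps run parallel to Lemma \ref{lem} and to the KdV analysis of \cite{rosier,caicedo_caspistrano_zhang_2015}.
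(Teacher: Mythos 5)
Your roadmap coincides with the paper's: reduce $N_T\neq\{0\}$ to an eigenvalue problem for $(\varphi_0,\psi_0)$ subject to the ten endpoint relations, treat $\lambda=0$ separately, and for $\lambda\neq 0$ use the compact-support Fourier transform together with the Paley--Wiener theorem to convert entirety of $\hat\varphi,\hat\psi$ into a resonance condition on $L$. Your $\lambda=0$ analysis is correct and complete, and is in fact a slightly more elementary route to the first family in $\FF_r$ than the paper's (which recovers it from the degenerate case where $\xi=0$ is a root of the symbol, forcing $p=0$).

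The gap is that for $\lambda\neq 0$ you assert the outcome of the decisive computation rather than performing it, and that computation is the entire content of the lemma. Concretely, the paper multiplies the eigenvalue equations by $e^{-ix\xi}$, integrates over $(0,L)$, and uses the boundary relations to solve for $\hat\psi(\xi)$ and $\hat\varphi(\xi)$ as explicit rational functions with common numerator factor $\alpha+\beta e^{-iL\xi}$, where $\alpha=\varphi(0)+\frac{ab}{c}\psi(0)$ and $\beta=-\varphi(L)-\frac{ab}{c}\psi(L)$, and common denominator the sextic $P(\xi)=(1-a^2b)\xi^6-r\xi^4-(c+1)p\xi^3+rp\xi+cp^2$ with $\lambda=ip$. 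Entirety forces the six roots of $P$ to be roots of $\alpha+\beta e^{-iL\xi}$, hence to lie in a single coset $\xi_0+\frac{2\pi}{L}\Z$; at this point one must also rule out $\alpha=\beta=0$ (the paper shows this would force $(\varphi,\psi)\equiv(0,0)$), a nondegeneracy step your sketch omits and without which the roots of $\alpha+\beta e^{-iL\xi}$ need not be simple or even exist. The quadratic form $\alpha(k,l,m,n,s)$ then drops out of two Vieta identities: the vanishing $\xi^5$-coefficient of $P$ gives $\xi_0=-\frac{\pi}{3L}(5k+4l+3m+2n+s)$, and the $\xi^4$-coefficient gives $\sum_{i<j}\xi_i\xi_j=-\frac{r}{1-a^2b}$, which combine to $\frac{3rL^2}{1-a^2b}=\pi^2\left(5(5k+4l+3m+2n+s)^2-12\eta\right)$ and hence the stated critical lengths. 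None of this appears in your proposal; the sentence ``carrying out this reduction shows that the resulting condition is exactly the second family of $\FF_r$'' is precisely the claim to be proved, so as written the argument begs the question at its crucial step.
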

\begin{proof}
By the same arguments given in \cite{rosier}, if $N_T\neq \{0\}$, the map  $(\varphi^1,\psi^1) \in N_T \rightarrow A(N_T)\subset \C N_T$
has (at least) one eigenvalue. Hence,  there exist $\lambda \in \C$ and $\varphi_0,\psi_0 \in  H^3(0,L)\setminus \{ 0 \}$, such that
\begin{equation}\label{e18}
\begin{cases}
\lambda\varphi_0+ \varphi'''_0 + \frac{ab}{c}\psi'''_0=0, & \text{in} \,\, (0,L),  \\
\lambda\psi_0 +\frac{r}{c}\psi'_0+a\varphi'''_{0} +\frac{1}{c}\psi'''_{0}=0, & \text{in} \,\, (0,L),  \\
a\varphi_0(x)+\frac{1}{c}\psi_0(x)=0, & \text{in} \,\, \{0,L\},\\
\varphi'_0(x)=\psi'_0(x)=0, & \text{in} \,\, \{0,L\},\\
\varphi''_{0} (x) +\frac{ab}{c}\psi''_{0} (x) =0, & \text{in} \,\, \{0,L\}, \\
a\varphi''_{0}(x)+\frac{1}{c}\psi''_{0}(x)+\frac{r}{c}\psi_0(x) =0, & \text{in} \,\, \{0,L\}.
\end{cases}
\end{equation}
To conclude the proof of the Lemma \ref{lem1}, we prove that this does not hold if $L \in (0,\infty) \setminus \FF_r$. To simplify the notation, henceforth we denote $(\varphi_0,\psi_0):=(\varphi,\psi)$.

\begin{lemma}\label{lem2}
Let $L>0$. Consider the assertion
\begin{equation*}
(\NN):\ \ \exists \lambda \in \C,  \exists \varphi,\psi \in H^3(0,L)\setminus(0,0),\,\, \text{such that}\,\,
\begin{cases}
\lambda\varphi+ \varphi''' + \frac{ab}{c}\psi'''=0, & \text{in} \,\, (0,L),  \\
\lambda\psi +\frac{r}{c}\psi'+a\varphi''' +\frac{1}{c}\psi'''=0, & \text{in} \,\, (0,L),  \\
a\varphi(x)+\frac{1}{c}\psi(x)=0, & \text{in} \,\, \{0,L\},\\
\varphi'(x)=\psi'(x)=0, & \text{in} \,\, \{0,L\},\\
\varphi'' (x) +\frac{ab}{c}\psi''(x) =0, & \text{in} \,\, \{0,L\}, \\
a\varphi''(x)+\frac{1}{c}\psi''(x)+\frac{r}{c}\psi(x) =0,  & \text{in} \,\, \{0,L\}.
\end{cases}
\end{equation*}
Then, $(\NN)$ holds if and only if $L \in \FF_r$.
\end{lemma}
\noindent\textit{Proof.}
We use an argument similar to the one used in \cite[Lemma 3,5]{rosier}. Let us introduce the notation $\hat{\varphi}(\xi) =\int_0^L e^{-i x \xi}\varphi(x)dx$ and $\hat{\psi}(\xi) =\int_0^L e^{-i x \xi}\psi(x)dx$. Then, multiplying the first and the second equations in $(\NN)$ by $e^{-i x \xi}$ and integrating by part in $(0,L)$, 
it follows that
\begin{multline*}
\left( (i\xi)^3+\lambda\right) \hat{\varphi}(\xi) +\frac{ab}{c}(i \xi)^3\hat{\psi}(\xi) \\
+ \left[ \left(\left(\varphi''(x)+\frac{ab}{c}\psi''(x)\right) +(i\xi)\left(\varphi'(x)+\frac{ab}{c}\psi'(x)\right)+(i\xi)^2\left(\varphi(x)+\frac{ab}{c}\psi(x)\right)\right) e^{-ix\xi}\right]_0^L = 0
\end{multline*}
and
\begin{multline*}
\left( \frac{1}{c}(i\xi)^3+\frac{r}{c}(i\xi)+\lambda\right) \hat{\psi}(\xi) +a(i \xi)^3\hat{\varphi}(\xi) + \left[  \left(\left(a\varphi''(x)+\frac{1}{c}\psi''(x)+\frac{r}{c}\psi(\xi)\right)\right.\right. \\
\left.\left.+(i\xi)\left(a\varphi'(x)+\frac{1}{c}\psi'(x)\right)+(i\xi)^2\left(a\varphi(x)+\frac{1}{c}\psi(x)\right)\right) e^{-ix\xi}\right]_0^L = 0.
\end{multline*}
The boundary conditions allow us to conclude that
\begin{equation}\label{eqq1}
\begin{cases}
[(i\xi)^3+\lambda]\hat{\varphi}(\xi)+\dfrac{ab}{c}(i\xi)^3\hat{\psi}(\xi) = (i\xi)^2\left(\varphi(0)+\dfrac{ab}{c}\psi(0)- \left(\varphi(L)+\dfrac{ab}{c}\psi(L)\right)e^{-iL\xi}\right), \\
\\
\dfrac{1}{c}[(i\xi)^3+r(i\xi)+c\lambda]\hat{\psi}(\xi) + a(i\xi)^3\hat{\varphi}(\xi)=0.
\end{cases}
\end{equation}
Then, from the first equation in \eqref{eqq1}, we obtain
\begin{equation}\label{eqq2}
\hat{\varphi}(\xi) = \frac{(i\xi)^2\left(\alpha+\beta e^{-iL\xi}\right)}{(i\xi)^3+\lambda} -\frac{ab(i\xi)^3\hat{\psi}(\xi)}{c\left((i\xi)^3+\lambda\right)},
\end{equation}
where $\alpha= \varphi(0)+\frac{ab}{c}\psi(0)$ and $\beta=-\varphi(L)-\frac{ab}{c}\psi(L)$. Replacing the above expression in the second equation in \eqref{eqq1} it follows that $$ \frac{1}{c}\left[ (i\xi)^3+r(i\xi)+c\lambda - \frac{a^2b(i\xi)^6}{(i\xi)^3+\lambda}\right]\hat{\psi}(\xi) =-\frac{a(i\xi)^5\left(\alpha+\beta e^{-iL\xi}\right)}{(i\xi)^3+\lambda}.$$
Thus,
\begin{equation}\label{eqq3}
\hat{\psi}(\xi) =-\frac{ac(i\xi)^5\left(\alpha+\beta e^{-iL\xi}\right)}{(1-a^2b)(i\xi)^6+r(i\xi)^4+(c+1)\lambda(i\xi)^3+r\lambda(i\xi)+c\lambda^2}.
\end{equation}
Replacing \eqref{eqq3} in \eqref{eqq2}, we obtain
\begin{align*}
\hat{\varphi}(\xi)& =  \frac{ (i\xi)^2\left( (i\xi)^3+r(i\xi)+c\lambda \right)\left(\alpha+\beta e^{-iL\xi}\right)}{(1-a^2b)(i\xi)^6+r(i\xi)^4+(c+1)\lambda(i\xi)^3+r\lambda(i\xi)+c\lambda^2}.
\end{align*}
Setting $\lambda=ip$, $p \in \C$, from the previous identities we can write $\hat{\psi}(\xi)=-i\left[ acf(\xi)\right]$ and $\hat{\varphi}(\xi)=-ig(\xi)$, where
\begin{equation*}
\begin{cases}
f(\xi) = \dfrac{\xi^5\left( \alpha +\beta e^{-iL\xi}\right)}{P(\xi)}, \\
\\
g(\xi)= \dfrac{\xi^2\left(\xi^3-r\xi-cp\right)\left( \alpha +\beta e^{-iL\xi}\right)}{P(\xi)},
\end{cases}
\end{equation*}
with
\begin{equation*}
P(\xi):=(1-a^2b)\xi^6-r\xi^4-(c+1)p\xi^3+rp\xi+cp^2.
\end{equation*}
Using Paley-Wiener theorem (see \cite[Section 4, page 161]{yosida}) and the usual characterization of $H^2(\R)$ functions by means of their Fourier transforms, we see that $(\NN)$ is equivalent to the existence of $p \in \C$ and $(\alpha,\beta) \in \C^2 \setminus (0,0),$ such that
\begin{enumerate}
\item[(i)] $f$ and $g$ are entire functions in $\C$,
\item[(ii)] $\displaystyle\int_{\R}|f(\xi)|^2(1+|\xi|^2)^2d\xi<\infty$ and $\int_{\R}|g(\xi)|^2(1+|\xi|^2)^2d\xi<\infty$,
\item[(iii)] $\forall \xi \in \C$, we have that $|f(\xi)|\leq c_1(1+|\xi|)^ke^{L|Im\xi|}$ and  $|g(\xi)|\leq c_1(1+|\xi|)^ke^{L|Im\xi|}$, for some positive constants $c_1 $ and $k$.
\end{enumerate}
Notice that if (i) holds true, then (ii) and (iii) are satisfied. Recall that $f$ and $g$ are entire functions if and only if the roots $\xi_0, \xi_1, \xi_2, \xi_3, \xi_4$ and $\xi_5$ of $P(\xi)$ are roots of $\xi^5\left( \alpha +\beta e^{-iL\xi}\right)$ and $\xi^2(\xi^3-r\xi-cp)\left( \alpha +\beta e^{-iL\xi}\right)$.
\vglue 0.3 cm
Let us first assume that $\xi=0$ is not a root of $P(\xi)$.  Thus, it is sufficient to consider the case when $\alpha +\beta e^{-iL\xi}$ and $P(\xi)$ share the same roots. Observe that the roots of $\alpha +\beta e^{-iL\xi}$ are simple, unless $\alpha = \beta= 0$ (Indeed, in this case $\varphi(0)+\frac{ab}{c}\psi(0)=0$ and $\varphi(L)+\frac{ab}{c}\psi(L)=0$ and using the system \eqref{e18} we conclude that $(\varphi,\psi)=(0,0)$, which is a contradiction). Then, (i) holds provided that the roots of $P(\xi)$ are simple. Therefore, it follows that $(\NN)$ is equivalent to the existence of complex numbers $p$ and $\xi_0$ and positive integers $k,l,m,n$ and $s$, such that, if we set
\begin{equation}\label{eqq4}
\xi_1=\xi_0+\frac{2\pi}{L}k, \quad \xi_2=\xi_1+\frac{2\pi}{L}l, \quad \xi_3=\xi_2+\frac{2\pi}{L}m, \quad \xi_4=\xi_3+\frac{2\pi}{L}n\quad \text{and} \quad \xi_5=\xi_4+\frac{2\pi}{L}s,
\end{equation}
$P(\xi)$ can be written as follows
\begin{equation*}
P(\xi)=(\xi-\xi_0)(\xi-\xi_1)(\xi-\xi_2)(\xi-\xi_3)(\xi-\xi_4)(\xi-\xi_5).
\end{equation*}
In particular, we obtain the following relations:
\begin{gather}
\xi_0+\xi_1+\xi_2+\xi_3+\xi_4+\xi_5=0, \label{eqq6}
\end{gather}
\begin{multline}
\xi_0(\xi_1+\xi_2+\xi_3+\xi_4+\xi_5)+\xi_1(\xi_2+\xi_3+\xi_4+\xi_5)+\xi_2(\xi_3+\xi_4+\xi_5) \\
+\xi_3(\xi_4+\xi_5) +\xi_4\xi_5=-\frac{r}{1-a^2b}\label{eqq7}
\end{multline}
and
\begin{gather*}
\xi_0\xi_1\xi_2\xi_3\xi_4\xi_5=\left(\frac{c}{1-a^2b}\right) p^2. \label{eqq8}
\end{gather*}
\eqref{eqq4} and \eqref{eqq6} imply that
\begin{multline*}
\xi_0+\left(\xi_0+\frac{2\pi}{L}k\right)+\left(\xi_0+\frac{2\pi}{L}(k+l)\right)+\left(\xi_0+\frac{2\pi}{L}(k+l+m)\right)+\left(\xi_0+\frac{2\pi}{L}(k+l+m+n)\right) \\
+\left(\xi_0+\frac{2\pi}{L}(k+l+m+n+s)\right)=0.
\end{multline*}
Straightforward computations lead to
\begin{equation}\label{eqq9}
 \xi_0 = -\dfrac{\pi}{3L}(5k+4l+3m+2n+s).
\end{equation}
On the other hand, from \eqref{eqq7}, we obtain
\begin{multline*}
\xi_0\left(5\xi_0+\frac{2\pi}{L}(5k+4l+3m+2n+s)\right) +\left(\xi_0+\frac{2\pi}{L}k\right) \left(4\xi_0+\frac{2\pi}{L}(4k+4l+3m+2n+s)\right) \\
 +\left(\xi_0+\frac{2\pi}{L}(k+l)\right) \left(3\xi_0+\frac{2\pi}{L}(3k+3l+3m+2n+s)\right)  \\
  +\left(\xi_0 +\frac{2\pi}{L}(k+l+m)\right) \left(2\xi_0+\frac{2\pi}{L}(2k+2l+2m+2n+s)\right) \\
  +\left(\xi_0+\frac{2\pi}{L}(k+l+m+n)\right) \left(\xi_0+\frac{2\pi}{L}(k+l+m+n+s)\right) = -\frac{r}{1-a^2b}.
\end{multline*}
Thus, we have
\begin{equation}\label{eqq12}
15\xi_0^2+\frac{2\pi}{L}(25k+20+15m+10n+5s)\xi_0+\frac{4\pi^2}{L^2}\eta = -\frac{r}{1-a^2b},
\end{equation}
where
\begin{multline*}
 \eta=k(10k+10l+9m+7n+4s) +l(6k+6l+6m+5n+3s)  \\
 +m(3k+3l+3m+3n+2s)+n(k+l+m+n+s).
\end{multline*}
Replacing \eqref{eqq9} in \eqref{eqq12}, we obtain
\begin{equation*}
\frac{3rL^2}{1-a^2b}= \pi^2\left(5(5k+4l+3m+2n+s)^2-12\eta\right).
\end{equation*}
From the discussion above, we can conclude that
\begin{equation}\label{eqq14}
\begin{cases}
L=\pi \sqrt{\dfrac{(1-a^2b)\alpha(k,l,m,n,s)}{3r}},\\
\\
\xi_0 = -\dfrac{\pi}{3}(5k+4l+3m+2n+s),\\
\\
p= \sqrt{\dfrac{(1-a^2b)\xi_0\xi_1\xi_2\xi_3\xi_4\xi_5}{c}},
\end{cases}
\end{equation}
where
\begin{align*}
\alpha(k,l,m,n,s):=&5k^2+8l^2+9m^2+8n^2+5s^2+8kl+6km+4kn+2ks+12ml \\
&+8ln+3ls+12mn+6ms+8ns.
\end{align*}
Now, we assume that $\xi_0= 0$ is a root of $P(\xi)$. Then, it follows that $p=0$ and
\begin{equation*}
\begin{cases}
f(\xi)= \dfrac{\xi^5\left(\alpha+\beta e^{-iL\xi}\right)}{(1-a^2b)\xi^6-r\xi^4}=\dfrac{\xi\left(\alpha+\beta e^{-iL\xi}\right)}{(1-a^2b)\xi^2-r},\\
\\
g(\xi)= \dfrac{\xi^2\left(\xi^3-r\xi\right)\left( \alpha +\beta e^{-iL\xi}\right)}{(1-a^2b)\xi^6-r\xi^4}=\dfrac{\left(\xi^2-r\right)\left( \alpha +\beta e^{-iL\xi}\right)}{\xi\left((1-a^2b)\xi^2-r\right)}.
\end{cases}
\end{equation*}
In this case, $(\NN)$ holds if and only if $f$ and $g$ satisfy (i), (ii) and (iii). Thus, (i) holds provided that
\begin{equation*}
\xi_0=0, \quad \xi_1=\sqrt{\frac{r}{1-a^2b}} \quad \text{and} \quad \xi_2=-\sqrt{\frac{r}{1-a^2b}}
\end{equation*}
are roots of $\alpha+\beta e^{-iL\xi}$. Therefore, we can write $\xi_1=\xi_0+\frac{2\pi}{L}k$, for $k \in \Z$. Consequently, it follows that
\begin{equation}\label{eqq15}
L= 2\pi k \sqrt{\dfrac{1-a^2b}{r}}.
\end{equation}
Finally, from \eqref{eqq14} and \eqref{eqq15}, we deduce that $(\NN)$ holds if and only if $L \in \FF_r$, where $\FF_r$ is given by \eqref{critical_f}. This completes the proof of Lemma \ref{lem2}, Lemma \ref{lem1} and, consequently, the proof of Proposition \ref{prop5}.
\end{proof}
The next result gives a positive answer for the control problem, and can be proved using the same ideas presented in Theorem \ref{teo1} and, thus, we will omit the proof.
\begin{thm}\label{teo3}
Let $T > 0$ and $L \in (0,\infty) \setminus \FF_r$, where $\FF_r$ is given by \eqref{critical_f}. Then, the system \eqref{ggln4}-\eqref{gglnb2} is exactly controllable in time T.
\end{thm}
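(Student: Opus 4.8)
The plan is to follow the Hilbert Uniqueness Method exactly as in the proof of Theorem \ref{teo1}, replacing the duality identity \eqref{ad2} by its analogue \eqref{ad3} for the present configuration, and replacing the observability estimate of Proposition \ref{prop4} by the one furnished by Proposition \ref{prop5}. Concretely, for each $(\varphi^1,\psi^1)\in\X$ I would let $(\varphi,\psi)$ be the solution of the adjoint system \eqref{linadj}-\eqref{linadjbound} with data $(\varphi^1,\psi^1)$, and define a linear map $\Gamma:\X\to\X$ by $\Gamma(\varphi^1,\psi^1)=(u(\cdot,T),v(\cdot,T))$, where $(u,v)$ solves \eqref{ggln4}-\eqref{gglnb2} with the feedback controls read off from the four terms on the right-hand side of \eqref{ad3}.

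The controls are dictated by matching each term of \eqref{ad3} to the corresponding square in Proposition \ref{prop5}; the natural choice is
\begin{equation*}
\begin{cases}
g_0(t) = (-\Delta_t)^{\frac13}\left(a\varphi(0,t)+\frac1c\psi(0,t)\right), & h_1(t) = \varphi_x(L,t)+\frac{ab}{c}\psi_x(L,t), \\
g_1(t) = a\varphi_x(L,t)+\frac1c\psi_x(L,t), & g_2(t) = -(-\Delta_t)^{\frac13}\left(a\varphi(L,t)+\frac1c\psi(L,t)\right),
\end{cases}
\end{equation*}
with $\Delta_t=\partial_t^2$; the sign in $g_2$ compensates the minus sign carried by the $g_2$-term in \eqref{ad3}. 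Using that $(-\Delta_t)^{\frac13}$ is nonnegative and self-adjoint, so that $((-\Delta_t)^{\frac13}w,w)_{L^2(0,T)}=\|(-\Delta_t)^{\frac16}w\|_{L^2(0,T)}^2$, Lemma \ref{equicontrol2} would give
\begin{align*}
\left(\Gamma(\varphi^1,\psi^1),(\varphi^1,\psi^1)\right)_{\X}
=&\left\|(-\Delta_t)^{\frac16}\left(a\varphi(0,\cdot)+\frac1c\psi(0,\cdot)\right)\right\|_{L^2(0,T)}^2
+\left\|\varphi_x(L,\cdot)+\frac{ab}{c}\psi_x(L,\cdot)\right\|_{L^2(0,T)}^2 \\
&+\left\|(-\Delta_t)^{\frac16}\left(a\varphi(L,\cdot)+\frac1c\psi(L,\cdot)\right)\right\|_{L^2(0,T)}^2
+\left\|a\varphi_x(L,\cdot)+\frac1c\psi_x(L,\cdot)\right\|_{L^2(0,T)}^2.
\end{align*}
The hidden regularity of Proposition \ref{hiddenregularities} guarantees $\varphi(0,\cdot),\psi(0,\cdot),\varphi(L,\cdot),\psi(L,\cdot)\in H^{\frac13}(0,T)$ and $\varphi_x(L,\cdot),\psi_x(L,\cdot)\in L^2(0,T)$; since $(-\Delta_t)^{\frac13}$ has order $\frac23$, this yields $g_0,g_2\in H^{-\frac13}(0,T)$ and $g_1,h_1\in L^2(0,T)$, so that $\vec h=(0,h_1,0)$ and $\vec g=(g_0,g_1,g_2)$ lie in $\HH_T$ and $\Gamma$ is bounded by Proposition \ref{prop2}.

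It is precisely here that the hypothesis $L\in(0,\infty)\setminus\FF_r$ enters: by Proposition \ref{prop5} the right-hand side above is bounded below by $C^{-1}\|(\varphi^1,\psi^1)\|_{\X}^2$, so the bounded bilinear form $((\varphi^1,\psi^1),(\phi^1,\chi^1))\mapsto(\Gamma(\varphi^1,\psi^1),(\phi^1,\chi^1))_{\X}$ is coercive. The Lax--Milgram theorem then yields that $\Gamma$ is an isomorphism of $\X$. Given target data $(u^1,v^1)\in\X$, I would set $(\varphi^1,\psi^1):=\Gamma^{-1}(u^1,v^1)$, and the controls generated by the displayed formulas steer $(0,0)$ to $(u^1,v^1)$ in time $T$; the reduction to zero initial data is justified as in the Remark following the definition of exact controllability. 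The only genuine difficulty has already been absorbed into Proposition \ref{prop5}, namely the spectral/unique continuation analysis of Lemma \ref{lem2} showing that the observability can fail exactly on $\FF_r$. Once that estimate is in hand, the argument above is a routine transcription of the proof of Theorem \ref{teo1}, which is why it may safely be omitted.
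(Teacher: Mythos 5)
Your proposal is correct and is essentially the argument the paper has in mind: the paper omits the proof of Theorem \ref{teo3}, stating only that it follows the same ideas as Theorem \ref{teo1}, and your transcription (the duality identity \eqref{ad3}, the four feedback controls you display with the sign adjustment in $g_2$, and the observability estimate of Proposition \ref{prop5} feeding into Lax--Milgram) is exactly that argument. The only cosmetic point is that \eqref{ad3} pairs $(u(\cdot,T),v(\cdot,T))$ with $(\varphi^1,\psi^1)$ in the unweighted $(L^2(0,L))^2$ product rather than the weighted $\X$ inner product you write (the same convention used in the paper's proof of Theorem \ref{teo1}); since the two norms are equivalent, this does not affect the coercivity or the conclusion.
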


\begin{remark}\label{remark2}
As in the previous subsection, the question here is weather system \eqref{ggln4}-\eqref{gglnb2'} is exactly controllable with another configuration of the boundary condition, for example,
\begin{equation}\label{gglnb2'}
\left\lbrace\begin{tabular}{l l l l}
$u_{xx}(0,t) = h_0(t)$, & $u_x(L,t) = h_1(t)$, & $u_{xx}(L,t) = h_2(t)$& in $(0,T)$,\\
$v_{xx}(0,t) =0,$ & $v_x(L,t) = g_1(t),$ & $v_{xx}(L,t) = 0$& in $(0,T)$.
\end{tabular}\right.
\end{equation}
The answer for this question is positive if we prove that the following observability inequality
\begin{align*}
\|(\varphi^1, \psi^1)\|_{\X}^2\leq C&\left\lbrace\left  \|(-\Delta)^{\frac{1}{6}}\left(\varphi(0,\cdot)+\frac{ab}{c}\psi(0,\cdot)\right)\right \|_{L^2(0,T)}^2 +\left  \|\varphi_x(L,\cdot)+\frac{ab}{c}\psi_x(L,\cdot)\right \|_{L^2(0,T)}^2 \right. \notag \\
&\left. +\left \|(-\Delta)^{\frac{1}{6}}\left(\varphi(L,\cdot)+\frac{ab}{c}\psi(L,\cdot)\right)\right \|_{L^2(0,T)} +\left  \|a\varphi_x(L,\cdot)+\frac{1}{c}\psi_x(L,\cdot)\right \|_{L^2(0,T)}^2\right\rbrace,
\end{align*}
holds, for any $(\varphi^1,\psi^1)$ in $\X$, where $(\varphi,\psi)$ is  solution of \eqref{linadj}-\eqref{linadjbound} with initial data $(\varphi^1,\psi^1)$.
Note that it can be proved using Proposition \ref{hiddenregularities} together with the contradiction argument as in the proof of Proposition \ref{prop5}. 
Thus, the exact controllability result is also true in this case.
\begin{thm}\label{teo4}
Let $T > 0$ and $L \in (0,\infty) \setminus \FF_r$. Then, the system \eqref{ggln4}-\eqref{gglnb2'} is exactly controllable in time T.
\end{thm}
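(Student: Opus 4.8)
The plan is to mirror the proof of Theorem \ref{teo3} for the dual choice of controls in \eqref{gglnb2'}, where the three controls $h_0,h_1,h_2$ act on the $u$-component and the single control $g_1$ acts on the $v$-component. First I would record the equivalent controllability condition, obtained exactly as in Lemma \ref{equicontrol2} by multiplying the equations of \eqref{ggln4} by the adjoint solution $(\varphi,\psi)$ of \eqref{linadj}--\eqref{linadjbound}, integrating by parts and inserting the boundary data \eqref{gglnb2'}. This time the surviving traces are $\varphi(0,\cdot)+\frac{ab}{c}\psi(0,\cdot)$, $\varphi(L,\cdot)+\frac{ab}{c}\psi(L,\cdot)$, $\varphi_x(L,\cdot)+\frac{ab}{c}\psi_x(L,\cdot)$ and $a\varphi_x(L,\cdot)+\frac{1}{c}\psi_x(L,\cdot)$, which are precisely the quantities appearing in the observability inequality stated in Remark \ref{remark2}. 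The whole statement then reduces to establishing that inequality for $L\in(0,\infty)\setminus\FF_r$.

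To prove the observability inequality I would argue by contradiction as in Proposition \ref{prop5}. Normalizing a putative bad sequence $\{(\varphi_n^1,\psi_n^1)\}$ in $\X$ so that $\|(\varphi_n^1,\psi_n^1)\|_\X=1$ while the four observation terms tend to zero, I would use the hidden regularity of Proposition \ref{hiddenregularities} together with the compact embeddings $H^1(0,L)\hookrightarrow L^2(0,L)\hookrightarrow H^{-2}(0,L)$ and $H^{\frac13}(0,T)\hookrightarrow L^2(0,T)$ to upgrade weak to strong convergence of the relevant traces, and Proposition \ref{prop3} to show that $\{(\varphi_n^1,\psi_n^1)\}$ is Cauchy in $\X$. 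Passing to the limit yields a nonzero solution $(\varphi,\psi)$ of the adjoint system satisfying, in addition to \eqref{linadjbound}, the overdetermined conditions $\varphi+\frac{ab}{c}\psi=0$ at $x=0,L$ and $\varphi_x(L,\cdot)=\psi_x(L,\cdot)=0$ (the latter following from $1-a^2b>0$). It remains to rule this out when $L\notin\FF_r$.

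The decisive step is the spectral analysis analogous to Lemma \ref{lem2}. Reducing to an eigenvalue problem $\lambda=ip$, $(\varphi,\psi)\in(H^3(0,L))^2\setminus\{0\}$, I would take Fourier transforms. The key observation is that the boundary contribution from the \emph{first} equation now vanishes identically, since $\varphi+\frac{ab}{c}\psi$ together with its first two $x$-derivatives vanish at both endpoints, whereas the boundary term from the \emph{second} equation collapses---after the $\frac{r}{c}\psi$ contributions cancel---to a single exponential factor carrying $\gamma=a\varphi(0)+\frac{1}{c}\psi(0)$ and $\delta=a\varphi(L)+\frac{1}{c}\psi(L)$. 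This is exactly the role-swap of the two admissible combinations relative to Case 2, and the crucial point is that it leaves the characteristic polynomial
\[
P(\xi)=(1-a^2b)\xi^6-r\xi^4-(c+1)p\xi^3+rp\xi+cp^2
\]
unchanged: one finds $\hat\varphi$ and $\hat\psi$ as rational functions with denominator $P(\xi)$ and numerators proportional to $\gamma-\delta e^{-iL\xi}$. By the Paley--Wiener theorem, $(\NN)$ forces the six roots of $P$ to be simple and equally spaced by $\frac{2\pi}{L}$, which reproduces the symmetric-function relations \eqref{eqq6}--\eqref{eqq7} together with the product identity for $\xi_0\xi_1\xi_2\xi_3\xi_4\xi_5$, hence the same critical set $\FF_r$ (including the degenerate branch $\xi_0=0$ that gives $L=2\pi k\sqrt{(1-a^2b)/r}$). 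For $L\notin\FF_r$ no such eigenpair exists, so the limit $(\varphi,\psi)$ must vanish, contradicting $\|(\varphi,\psi)\|_\X=1$.

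Having secured the observability inequality, I would close the argument exactly as in Theorem \ref{teo1}: define the linear map $\Gamma(\varphi^1,\psi^1)=(u(\cdot,T),v(\cdot,T))$ with controls $h_0,h_2$ given through $(-\Delta_t)^{\frac13}$ applied to $\varphi+\frac{ab}{c}\psi$ at $x=0,L$ (up to the signs dictated by the duality identity), $h_1=\varphi_x(L,\cdot)+\frac{ab}{c}\psi_x(L,\cdot)$ and $g_1=a\varphi_x(L,\cdot)+\frac{1}{c}\psi_x(L,\cdot)$; then check that $\left(\Gamma(\varphi^1,\psi^1),(\varphi^1,\psi^1)\right)_\X$ equals the sum of the squared observation norms and is therefore coercive by the observability inequality, and invoke Lax--Milgram to invert $\Gamma$. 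The main obstacle I anticipate is purely at the spectral stage: one must verify carefully that interchanging the roles of $\varphi+\frac{ab}{c}\psi$ and $a\varphi+\frac{1}{c}\psi$ perturbs neither $P(\xi)$ nor the root spacing of the exponential factor, so that the critical set is genuinely $\FF_r$ and not some other countable set. Everything else is a faithful transcription of Propositions \ref{prop5} and \ref{prop3} and the Lax--Milgram step.
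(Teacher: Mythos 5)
Your overall strategy (duality identity, contradiction/compactness argument via Propositions \ref{hiddenregularities} and \ref{prop3}, reduction to an overdetermined eigenvalue problem, Paley--Wiener analysis, Lax--Milgram) is exactly the route the paper indicates in Remark \ref{remark2}, and the non-degenerate branch of your spectral analysis does go through: for $p\neq 0$ all six roots of $P(\xi)$ must be simple zeros of the exponential factor, and the symmetric-function relations reproduce the second family in $\FF_r$. The gap sits precisely at the point you flagged and then waved away: the degenerate branch changes. For the configuration \eqref{gglnb2'} the only surviving boundary contribution in the Fourier-transformed eigenvalue problem comes from the \emph{second} equation (all boundary terms of the first equation vanish because $\varphi+\frac{ab}{c}\psi$ and its first two derivatives vanish at both endpoints), and one finds, with $\gamma=a\varphi(0)+\frac1c\psi(0)$ and $\delta=-\bigl(a\varphi(L)+\frac1c\psi(L)\bigr)$,
\[
\hat\varphi(\xi)=\frac{iab\,\xi^{5}\bigl(\gamma+\delta e^{-iL\xi}\bigr)}{P(\xi)},\qquad
\hat\psi(\xi)=-\frac{ic\,\xi^{2}(\xi^{3}-p)\bigl(\gamma+\delta e^{-iL\xi}\bigr)}{P(\xi)}.
\]
When $p=0$ we have $P(\xi)=\xi^{4}\bigl((1-a^2b)\xi^{2}-r\bigr)$ and \emph{both} numerators carry $\xi^{5}$, so the singularity at $\xi=0$ cancels automatically and $\xi=0$ is \emph{not} forced to be a zero of $\gamma+\delta e^{-iL\xi}$ --- unlike in Lemma \ref{lem2}, where $g(\xi)$ retains a factor $1/\xi$. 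Only $\pm\sqrt{r/(1-a^2b)}$ must be zeros, and since the zeros of $\gamma+\delta e^{-iL\xi}$ are spaced by $2\pi/L$, the entireness condition becomes $2L\sqrt{r/(1-a^2b)}\in 2\pi\N^{*}$, i.e. $L=\pi k\sqrt{(1-a^2b)/r}$: twice as many critical lengths as the first family of $\FF_r$.

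This is not a removable technicality. Set $\omega=\sqrt{r/(1-a^2b)}$ and take $L=k\pi/\omega$ with $k$ odd (and $a\neq0$). Then $\varphi(x)=\cos(\omega x)$, $\psi(x)=-\frac{c}{ab}\cos(\omega x)$ is a nontrivial stationary solution of \eqref{linadj}--\eqref{linadjbound}: indeed $\varphi+\frac{ab}{c}\psi\equiv0$ and $\varphi_{xx}+\frac{ab}{c}\psi_{xx}\equiv0$, $(1-a^2b)\varphi''+r\varphi\equiv0$ gives $a\varphi_{xx}+\frac1c\psi_{xx}+\frac{r}{c}\psi\equiv0$, and $\varphi_x(0)=\varphi_x(L)=0$. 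For this solution all four observation quantities in the inequality of Remark \ref{remark2} vanish identically, so that inequality fails; by the duality identity the state $(\varphi,\psi)$ is then unreachable and exact controllability fails. Yet $L=\pi\sqrt{(1-a^2b)/r}$ (the case $k=1$) does not belong to $\FF_r$: it is an odd multiple of $\pi\sqrt{(1-a^2b)/r}$, and membership in the second family would require $\alpha(k,l,m,n,s)=3$, whereas $\alpha\ge104$. Hence the critical set for \eqref{gglnb2'} is strictly larger than $\FF_r$, your assertion that the degenerate branch still yields $L=2\pi k\sqrt{(1-a^2b)/r}$ is exactly where the argument breaks, and the statement cannot be proved as written. (The paper itself only sketches this case in Remark \ref{remark2}, so the defect is inherited from there; a correct version must enlarge $\FF_r$ by the odd multiples of $\pi\sqrt{(1-a^2b)/r}$.)
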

\end{remark}

\subsection{One Control}

In this subsection, we intend to prove the exact controllability of the system by using only one boundary control $h_1$ or  $g_1$ and fixing $h_0=h_2=g_0=g_2=0$, namely,
\begin{equation}\label{gglnb3}
\left\lbrace\begin{tabular}{l l l l}
$u_{xx}(0,t) = 0$ & $u_x(L,t) = h_1(t),$ & $u_{xx}(L,t) = 0$,& in $(0,T)$,\\
$v_{xx}(0,t) =0,$ & $v_x(L,t) = 0,$ & $v_{xx}(L,t) =0$,& in $(0,T)$.
\end{tabular}\right.
\end{equation}
or
\begin{equation}\label{gglnb3'}
\left\lbrace\begin{tabular}{l l l l}
$u_{xx}(0,t) = 0$ & $u_x(L,t) = 0$ & $u_{xx}(L,t) = 0$,& in $(0,T)$,\\
$v_{xx}(0,t) =0,$ & $v_x(L,t) = g_1(t),$ & $v_{xx}(L,t) =0$,& in $(0,T)$.
\end{tabular}\right.
\end{equation}
The result below give us an equivalent condition for the exact controllability and the proof is analogous to the proof of the Lemma \ref{equicontrol}.
\begin{lemma}\label{equicontrol3}
For any $(u^1,v^1)$ in $\X$, there exist one control $\overrightarrow{h}=(0,h_1,0)$ and $\overrightarrow{g}=(0,0,0)$ (resp. $\overrightarrow{h}=(0,0,0)$ and $\overrightarrow{g}=(0,g_1,0)$) in $\HH_T$, such that the solution $(u,v)$ of \eqref{ggln4}-\eqref{gglnb3} (resp. \eqref{ggln4}-\eqref{gglnb3'})  satisfies \eqref{finaldata} if and only if
\begin{multline*}
\int_0^L(u^1(x)\varphi^1(x)+v^1(x)\psi^1(x))dx= \int_0^T h_1(t)\left[\varphi_x(L,t)+\frac{ab}{c}\psi_x(L,t)\right]dt \\
\left(\text{resp.}\quad \int_0^L(u^1(x)\varphi^1(x)+v^1(x)\psi^1(x))dx= \int_0^T g_1(t)\left[a\varphi_x(L,t)+\frac{1}{c}\psi_x(L,t)\right]dt \right)
\end{multline*}
for any $(\varphi^1,\psi^1)$ in $\X$, where  $(\varphi,\psi)$  is the solution of the backward system \eqref{linadj}-\eqref{linadjbound}.
\end{lemma}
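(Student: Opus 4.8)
The plan is to follow verbatim the duality scheme already used for Lemma~\ref{equicontrol}, the only genuine difference being the bookkeeping of which boundary terms survive. I would start from the fundamental integration-by-parts identity derived in Section~\ref{Sec2} just before the adjoint system \eqref{linadj} (the one pairing a solution $(u,v)$ against a test pair $(\varphi,\psi)$), and insert into it a solution $(\varphi,\psi)$ of the backward system \eqref{linadj}-\eqref{linadjbound} with final data $(\varphi^1,\psi^1)$. As already observed in the reduction to zero initial data, there is no loss of generality in taking $u^0=v^0=0$, so the left-hand side of that identity collapses to $\int_0^L\left(u(x,T)\varphi^1(x)+v(x,T)\psi^1(x)\right)dx$.

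Next I would eliminate the interior integrals and most of the boundary integrals. The two volume terms vanish precisely because $(\varphi,\psi)$ solves \eqref{linadj}. Among the boundary contributions, the adjoint boundary conditions \eqref{linadjbound} annihilate every term carrying a trace of $u$, $v$, $u_x(0,\cdot)$, or $v_x(0,\cdot)$: indeed $\varphi_x(0,\cdot)=\psi_x(0,\cdot)=0$ kills the $u_x(0,\cdot)$ and $v_x(0,\cdot)$ pairings, while the relations $\varphi_{xx}+\tfrac{ab}{c}\psi_{xx}=0$ and $a\varphi_{xx}+\tfrac1c\psi_{xx}+\tfrac{r}{c}\psi=0$ at $x=0,L$ kill the $u(0,\cdot),u(L,\cdot),v(0,\cdot),v(L,\cdot)$ pairings. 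What remains is exactly the cluster of six genuinely controlled traces $u_{xx}(0,\cdot),u_x(L,\cdot),u_{xx}(L,\cdot)$ and $v_{xx}(0,\cdot),v_x(L,\cdot),v_{xx}(L,\cdot)$, each paired against the corresponding adjoint datum.

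Imposing now the boundary conditions \eqref{gglnb3} (resp.\ \eqref{gglnb3'}) makes all of these traces vanish except the single active one $u_x(L,\cdot)=h_1$ (resp.\ $v_x(L,\cdot)=g_1$), so the identity reduces to
\[
\int_0^L\left(u(x,T)\varphi^1(x)+v(x,T)\psi^1(x)\right)dx=\int_0^T h_1(t)\left(\varphi_x(L,t)+\tfrac{ab}{c}\psi_x(L,t)\right)dt
\]
(resp.\ the analogous identity with $g_1$ and $a\varphi_x(L,\cdot)+\tfrac1c\psi_x(L,\cdot)$). Since this holds for every $(\varphi^1,\psi^1)\in\X$ and such data range over all of $\X$, the requirement $(u(\cdot,T),v(\cdot,T))=(u^1,v^1)$ is equivalent to replacing $u(\cdot,T),v(\cdot,T)$ by $u^1,v^1$ on the left-hand side, i.e.\ to the asserted equivalence with $\int_0^L(u^1\varphi^1+v^1\psi^1)\,dx$.

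I do not expect any real obstacle here; the only point that needs care is justifying the integration by parts, since the traces involved are only as regular as Proposition~\ref{hiddenregularities} allows, while $(u,v)$ lives in $\ZZ_T$ with the hidden regularity of Proposition~\ref{prop2}. As in Lemma~\ref{equicontrol}, this is handled by first performing the computation for smooth data and then passing to the limit by density, every trace converging in the spaces dictated by \eqref{hr1} and \eqref{hr4}.
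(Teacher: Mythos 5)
Your proposal is correct and follows essentially the same route as the paper, which proves this lemma exactly as it proves Lemma~\ref{equicontrol}: multiply \eqref{ggln4} by the adjoint solution, integrate by parts, let the adjoint equations and the boundary conditions \eqref{linadjbound} kill the volume and passive boundary terms, and then use \eqref{gglnb3} (resp.\ \eqref{gglnb3'}) to retain only the single pairing against $h_1$ (resp.\ $g_1$). Your closing remark on justifying the computation by density using the trace regularity of Propositions~\ref{prop2} and~\ref{hiddenregularities} is a point the paper leaves implicit, but it is not a different argument.
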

Note that using the change of variable $x' = L-x$ and $t' = T-t$, the system \eqref{linadj}-\eqref{linadjbound} is equivalent to the following back-forward system
\begin{align}
&\begin{cases}\label{linadj1}
\varphi_t +\varphi_{xxx} +\frac{ab}{c}\psi_{xxx}=0,  & \text{in} \,\, (0,L)\times (0,T),\\
\psi_t  +\frac{r}{c}\psi_x+a\varphi_{xxx} +\frac{1}{c}\psi_{xxx} =0,  & \text{in} \,\, (0,L)\times (0,T), \\
\varphi(x,0)= \varphi^0(x), \,\,
\psi(x,0)= \psi^0(x), & \text{in} \,\, (0,L),
\end{cases}
\end{align}
with boundary conditions
\begin{equation}\label{linadjbound1}
\begin{cases}
\varphi_{xx} (x,t) +\frac{ab}{c}\psi_{xx} (x,t) =0, & \text{in} \,\, \{0,L\}\times (0,T),\\
a\varphi_{xx}(x,t)+\frac{1}{c}\psi_{xx}(x,t)+\frac{r}{c}\psi(x,t) =0, & \text{in} \,\, \{0,L\}\times (0,T), \\
\varphi_x(L,t)=\psi_x(L,t) =0, & \text{in} \,\, (0,T).
\end{cases}
\end{equation}
It is well know (according to the previous sections) that the observability inequality
\begin{align}\label{obineq3}
\|(\varphi^0, \psi^0)\|_{\X}^2\leq C  \left  \|\varphi_x(0,\cdot)+\frac{ab}{c}\psi_x(0,\cdot)\right \|_{L^2(0,T)}^2
\end{align}
or 
\begin{align}\label{obineq3'}
\|(\varphi^0, \psi^0)\|_{\X}^2\leq C \left \|a\varphi_x(0,\cdot)+\frac{1}{c}\psi_x(0,\cdot)\right\|_{L^2(0,T)}^2
\end{align}
plays a fundamental role for the study of the controllability. To prove \eqref{obineq3} (resp. \eqref{obineq3'}), we use a direct approach based on the multiplier technique that gives us the observability inequality for small values of the length $L$ and large time of control $T$.
\begin{prop}\label{prop6}
Let us suppose that  $T > 0$ and $L>0$ satisfy
\begin{align}\label{Lsmall}
1>\frac{\beta C_T}{T}\left[L +\frac{r}{c} \right],
\end{align}
where $C_T$ is the constant in \eqref{hr4} and $\beta$ is the constant given by the embedding $H^{\frac{1}{3}}(0,T) \subset L ^2(0,T)$. 
Then, there exists a constant $C (T,L) > 0$, such that for any $(\varphi^0,\psi^0)$ in $\X$ the observability inequality \eqref{obineq3} (resp. \eqref{obineq3'}) holds, where $(\varphi,\psi)$ is  solution of \eqref{linadj1}-\eqref{linadjbound1} with initial data $(\varphi^0,\psi^0)$.
\end{prop}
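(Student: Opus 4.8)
The plan is to follow the direct multiplier approach of \cite{cerpapazoto2011}, working with the equivalent back--forward system \eqref{linadj1}--\eqref{linadjbound1} so that the observation sits at $x=0$, and to close the argument by absorbing every boundary trace that the single observation $\varphi_x(0,\cdot)+\frac{ab}{c}\psi_x(0,\cdot)$ does not see by means of the hidden regularity \eqref{hr4}. It suffices to prove \eqref{obineq3}; the inequality \eqref{obineq3'} is obtained verbatim after interchanging the roles of the two equations.

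First I would derive the energy identity dictated by the inner product of $\X$: multiplying the first equation of \eqref{linadj1} by $\varphi$ and the second by $\frac{b}{c}\psi$, integrating over $(0,L)$, and using the boundary conditions \eqref{linadjbound1} (in particular the relations $\varphi_{xx}=-\frac{ab}{c}\psi_{xx}$ and $\psi_{xx}=-\frac{r}{1-a^2b}\psi$ forced at each endpoint), one is left with a pure boundary expression
\begin{equation*}
\frac{d}{dt}\,\tfrac12\|(\varphi,\psi)(\cdot,t)\|_{\X}^2 = \tfrac{br}{2c^2}\psi^2(L,t)-\tfrac12\Big(\varphi_x(0,t)+\tfrac{ab}{c}\psi_x(0,t)\Big)^2-\tfrac{b(1-a^2b)}{2c^2}\psi_x^2(0,t)-\tfrac{br}{2c^2}\psi^2(0,t).
\end{equation*}
The condition $1-a^2b>0$ makes the endpoint form in $(\varphi_x,\psi_x)$ positive definite; the point to notice is that the observation captures only part of the dissipation, the traces $\psi_x(0,\cdot)$, $\psi(0,\cdot)$ and $\psi(L,\cdot)$ being a priori uncontrolled.

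Next I would introduce two auxiliary multipliers. Multiplying \eqref{linadj1} by $(T-t)\varphi$ and $\frac{b}{c}(T-t)\psi$ and integrating over $(0,L)\times(0,T)$, then combining with the energy identity, yields a relation of the form $T\|(\varphi^0,\psi^0)\|_{\X}^2 \sim \int_0^T\|(\varphi,\psi)\|_{\X}^2\,dt + \text{(boundary traces)}$. Multiplying instead by the spatial weights $x\varphi$ and $\frac{b}{c}x\psi$ generates the interior term $\int_0^T\!\!\int_0^L\big(\varphi_x^2+\frac{b}{c}\psi_x^2+2\frac{ab}{c}\varphi_x\psi_x\big)\,dx\,dt$, again positive definite thanks to $1-a^2b>0$, while the transport term $\frac{r}{c}\psi_x$ contributes $-\frac{br}{2c^2}\int\!\!\int\psi^2$ and the endpoint mass $\frac{br}{2c^2}L\,\psi^2(L)$; crucially, the weight $x$ annihilates every trace at $x=0$. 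Feeding these into the trace--Poincaré inequality $\|w\|_{L^2(0,L)}^2\le 2L\,w(L)^2+2L^2\|w_x\|_{L^2(0,L)}^2$ (applied to $w=\varphi,\psi$ and responsible for the factor $L$) should collapse the chain into a single estimate of the form
\begin{equation*}
\|(\varphi^0,\psi^0)\|_{\X}^2 \le \frac{C}{T}\Big\|\varphi_x(0,\cdot)+\tfrac{ab}{c}\psi_x(0,\cdot)\Big\|_{L^2(0,T)}^2 + \frac{1}{T}\big(L+\tfrac{r}{c}\big)\,\|(\varphi^0,\psi^0)\|_{\X}\,\Theta,
\end{equation*}
where $\Theta:=\|\psi_x(0,\cdot)\|_{L^2(0,T)}+\|\psi(0,\cdot)\|_{L^2(0,T)}+\|\psi(L,\cdot)\|_{L^2(0,T)}$ collects exactly the traces not seen by the single observation, and in which the length $L$ and the transport coefficient $r/c$ enter only as their common weight.

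The main obstacle, and the reason for the size restriction \eqref{Lsmall}, is to absorb the second term. Here I would invoke Proposition \ref{hiddenregularities}: the estimate \eqref{hr4} with $k=1$ bounds $\|\psi_x(0,\cdot)\|_{L^2(0,T)}$ by $C_T\|\psi^0\|_{L^2(0,L)}$, while the case $k=0$ together with the embedding $H^{\frac13}(0,T)\subset L^2(0,T)$ of constant $\beta$ bounds $\|\psi(0,\cdot)\|_{L^2(0,T)}+\|\psi(L,\cdot)\|_{L^2(0,T)}$ by $\beta C_T\|\psi^0\|_{L^2(0,L)}$; since $\frac{b}{c}\|\psi^0\|_{L^2(0,L)}^2\le\|(\varphi^0,\psi^0)\|_{\X}^2$, this gives $\Theta\le\beta C_T\|(\varphi^0,\psi^0)\|_{\X}$ up to the normalizing constant. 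Consequently the second term is controlled by $\frac{\beta C_T}{T}\big(L+\frac{r}{c}\big)\|(\varphi^0,\psi^0)\|_{\X}^2$, which under \eqref{Lsmall} is strictly smaller than $\|(\varphi^0,\psi^0)\|_{\X}^2$ and may be moved to the left-hand side, leaving exactly \eqref{obineq3}. I expect the genuinely delicate part to be the bookkeeping of the multiplier identities that forces $\Theta$ to appear with the small prefactor $L+\frac{r}{c}$ and paired \emph{linearly} with $\|(\varphi^0,\psi^0)\|_{\X}$ — so that a single power of the trace constant $C_T$ survives — rather than with an $O(1)$ weight; it is this structure, and not any individual estimate, that makes the hidden regularity \eqref{hr4} just strong enough to close the argument. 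The companion inequality \eqref{obineq3'} follows by the same computation with the two equations interchanged.
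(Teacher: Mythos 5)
Your overall strategy --- the $(T-t)$ multiplier adapted to the $\X$ inner product, followed by absorption of the unobserved traces through the hidden regularity \eqref{hr4} under the smallness condition \eqref{Lsmall} --- is the same as the paper's, but the step you yourself flag as ``the genuinely delicate part'' is precisely where the argument breaks. Your decomposition of the endpoint quadratic form,
\begin{equation*}
\varphi_x^2(0,t)+\tfrac{2ab}{c}\varphi_x(0,t)\psi_x(0,t)+\tfrac{b}{c^2}\psi_x^2(0,t)
=\Bigl(\varphi_x(0,t)+\tfrac{ab}{c}\psi_x(0,t)\Bigr)^2+\tfrac{b(1-a^2b)}{c^2}\psi_x^2(0,t),
\end{equation*}
is correct, but the residual term $\tfrac{b(1-a^2b)}{c^2}\psi_x^2(0,t)$ enters the $(T-t)$-multiplier identity with an $O(1)$ coefficient that carries no factor of $L$, of $r/c$, or of $1/T$: it comes from integrating the third-order terms by parts, not from the transport term (which produces the $r/c$ weight on $\psi^2(0,\cdot)$) nor from the interior term (which produces the $L$ weight via $\|w\|_{L^2(0,L)}^2\le L\|w\|_{L^\infty(0,L)}^2$). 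Consequently your claimed intermediate estimate, in which the whole of $\Theta$ --- including $\|\psi_x(0,\cdot)\|_{L^2(0,T)}$ --- appears with the prefactor $\tfrac{1}{T}\bigl(L+\tfrac{r}{c}\bigr)$, cannot be produced by the bookkeeping you describe; bounding that trace by $C_T\|\psi^0\|_{L^2(0,L)}$ via \eqref{hr4} would then leave an $O(1)\cdot C_T^2\|(\varphi^0,\psi^0)\|_{\X}^2$ term on the right that \eqref{Lsmall} is powerless to absorb. The paper sidesteps this by not sending $\psi_x(0,\cdot)$ through the hidden regularity at all: it dominates the entire endpoint quadratic form by $\tfrac{1}{a^2b}\bigl\|\varphi_x(0,\cdot)+\tfrac{ab}{c}\psi_x(0,\cdot)\bigr\|_{L^2(0,T)}^2$, i.e.\ by the observed quantity alone, which is why the factor $\tfrac{1}{a^2b}$ appears in the final constant $K$ of \eqref{TLcondition}; only $\psi(0,\cdot)$ and the interior $L^2(0,T;\X)$ norm are absorbed under \eqref{Lsmall}. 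Any correct completion of your argument must treat $\psi_x(0,\cdot)$ by some such mechanism tied to the observation, not by the smallness of $L+\tfrac{r}{c}$.

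Two secondary remarks. First, the paper does not use your spatial multiplier $x\varphi$, $\tfrac{b}{c}x\psi$ nor a trace--Poincar\'e inequality: the interior term $\tfrac1T\|(\varphi,\psi)\|_{L^2(0,T;\X)}^2$ is estimated directly by $\tfrac{L\beta C_T}{T}\|(\varphi^0,\psi^0)\|_{\X}^2$ using $\|w(\cdot,t)\|_{L^2(0,L)}^2\le L\|w(\cdot,t)\|_{L^\infty(0,L)}^2$ together with the $\sup_x$ structure of \eqref{hr4} and the embedding $H^{\frac13}(0,T)\subset L^2(0,T)$; your route would introduce extra boundary terms at $x=L$ and a second absorption, without any gain. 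Second, the ``linear pairing'' $\|(\varphi^0,\psi^0)\|_{\X}\,\Theta$ that you engineer so that a single power of $C_T$ survives does not occur in the paper: every term there is quadratic, and the single power of $\beta C_T$ in \eqref{Lsmall} reflects the paper's (loose) bookkeeping of constants rather than a genuinely linear structure you need to reproduce.
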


\begin{proof}
We multiply the first equation in \eqref{linadj1} by $(T-t)\varphi$, the second one by $\frac{b}{c}(T-t)\psi$ and integrate over $(0,T)\times(0,L)$, to give us:

\begin{align*}
\frac{T}{2}\int_0^L\left( \varphi_0^2(x) +\frac{b}{c}\psi_0^2(x)\right)dx=&\frac{1}{2}\int_0^T\int_0^L \left(\varphi^2(x,t)+\frac{b}{c}\psi^2(x,t)\right)dxdt \\
&+\int_0^T (T-t)\left [ \varphi(L,t)\left(\varphi_{xx}(L,t)+\frac{ab}{c}\psi_{xx}(L,t)\right)\right]dt\\
&-\int_0^T (T-t)\left [ \varphi(0,t)\left(\varphi_{xx}(0,t)+\frac{ab}{c}\psi_{xx}(0,t)\right)\right]dt \\
&+\int_0^T (T-t)\left [\frac{b}{c}\psi(L,t)\left(a\varphi_{xx}(L,t)+\frac{\psi_{xx}(L,t)}{c}+\frac{r}{2c}\psi(L,t)\right)\right]dt \\
&+\int_0^T (T-t)\left [ -\frac{b}{c}\psi(0,t)\left(a\varphi_{xx}(0,t)+\frac{\psi_{xx}(0,t)}{c}+\frac{r}{2c}\psi(0,t)\right) \right]dt \\
&+\frac12\int_0^T(T-t)\left[\varphi_x^2(0,t)+\frac{2ab}{c}\psi_x(0,t)\varphi_x(0,t)+\frac{b}{c^2}\psi_x^2(0,t)\right]dt.
\end{align*}
From the boundary conditions \eqref{linadjbound1}, we have that
\begin{align*}
\|(\varphi^0,\psi^0)\|_{\X}^2\leq& \frac{1}{T}\|(\varphi,\psi)\|_{L^2(0,T;\X)}^2 +\frac{br}{c^2T}\|\psi(0,\cdot)\|_{L^2(0,T)}^2-\frac{br}{c^2}\int_0^T\frac{T-t}{T}\psi(L,t)^2dt\\
& +\int_0^T\left[\varphi_x^2(0,t)+\frac{2ab}{c}\psi_x(0,t)\varphi_x(0,t)+\frac{b}{c^2}\psi_x^2(0,t)\right]dt,
\\
\leq&\frac{1}{T}\|(\varphi,\psi)\|_{L^2(0,T;\X)}^2 +\frac{\beta b r}{c^2T}\|\psi(0,\cdot)\|_{H^{\frac{1}{3}}(0,T)}^2 +\frac{1}{a^2b}\left\|\varphi_x(0,\cdot)+\frac{ab}{c}\psi_x(0,\cdot)\right\|_{L^2(0,T)}^2,
\end{align*}
\begin{equation*}
\left(\text{resp.} \,\, \|(\varphi^0,\psi^0)\|_{\X}^2\leq\frac{1}{T}\|(\varphi,\psi)\|_{L^2(0,T;\X)}^2 +\frac{\beta b r}{c^2T}\|\psi(0,\cdot)\|_{H^{\frac{1}{3}}(0,T)}^2 +\frac{1}{a^2}\left\|a\varphi_x(0,\cdot)+\frac{1}{c}\psi_x(0,\cdot)\right\|_{L^2(0,T)}^2 \right)
\end{equation*}
where $\beta$ is the constant given by the compact embedding $H^{\frac{1}{3}}(0,T) \subset L ^2(0,T)$. 
On the other hand, note that $L^{\infty}(0,L) \subset L^{2}(0,L)$, thus
\begin{equation}
\|\varphi(\cdot,t)\|^2_{L^2(0,L)} \leq L \|\varphi(\cdot,t)\|^2_{L^{\infty}(0,L)}, \quad \text{and} \quad \|\psi(\cdot,t)\|^2_{L^2(0,L)} \leq L \|\psi(\cdot,t)\|^2_{L^{\infty}(0,L)},  
\end{equation} 
Hence, 
\begin{align*}
\|(\varphi,\psi)\|_{L^2(0,T;\X)}^2 &= \int_0^T \left\lbrace \|\varphi(\cdot,t)\|^2_{L^2(0,L)}  + \frac{b}{c}\|\psi(\cdot,t)\|^2_{L^2(0,L)}\right\rbrace dt \\
&\leq L\int_0^T \left\lbrace \|\varphi(\cdot,t)\|^2_{L^{\infty}(0,L)}  + \frac{b}{c}\|\psi(\cdot,t)\|^2_{L^{\infty}(0,L)}\right\rbrace dt \\
&\leq  L\beta \|\varphi\|_{H^{\frac{1}{3}}(0,T;L^{\infty}(0,L))}^2+\frac{bL\beta}{c}\|\psi\|_{H^{\frac{1}{3}}(0,T;L^{\infty}(0,L))}^2 .  \\
\end{align*}
Thanks to the Proposition \ref{hiddenregularities}, we obtain
\begin{align*}
\|(\varphi^0,\psi^0)\|_{\X}^2 \leq & \frac{L \beta C_T}{T} \|\varphi^0\|_{L^2(0,L)}^2+\frac{b L \beta C_T}{cT}\|\psi^0\|_{L^2(0,L)}^2 +\frac{\beta C_T b r}{c^2T}\|\psi^0\|_{L^2(0,L)}^2 \\
& +\frac{1}{a^2b}\left\|\varphi_x(0,\cdot) +\frac{ab}{c}\psi_x(0,\cdot)\right\|_{L^2(0,T)}^2 \\
\leq &\frac{L \beta C_T}{T} \|(\varphi^0,\psi^0)\|_{\X}^2 +\frac{\beta C_T r}{cT}\|(\varphi^0,\psi^0)\|_{\X}^2  +\frac{1}{a^2b}\left\|\varphi_x(0,\cdot) +\frac{ab}{c}\psi_x(0,\cdot)\right\|_{L^2(0,T)}^2.  \\
\end{align*}
Finally, it follows that
\begin{align*}
\|(\varphi^0,\psi^0)\|_{\X}^2 \leq & K \left\|\varphi_x(0,\cdot)+\frac{ab}{c}\psi_x(0,\cdot)\right\|_{L^2(0,T)}^2
\end{align*}
under the condition
\begin{equation}\label{TLcondition}
 K= \frac{1}{a^2b}\left(1- \frac{\beta C_T}{T}\left[L +\frac{r}{c} \right]\right)^{-1}>0.
\end{equation}
\end{proof}

From the observability inequality \eqref{obineq3}, the following result holds.

\begin{thm}\label{teo5}
Let $T > 0$ and $L >0$ satisfying \eqref{Lsmall}. Then, the system \eqref{ggln4}-\eqref{gglnb3} (resp. \eqref{ggln4}-\eqref{gglnb3'}) is exactly controllable in time T.
\end{thm}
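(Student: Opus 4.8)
The plan is to follow the Hilbert Uniqueness Method exactly as in the proof of Theorem \ref{teo1}, the only difference being that coercivity of the control operator now comes from the single-trace observability inequality \eqref{obineq3} (resp. \eqref{obineq3'}) furnished by Proposition \ref{prop6}. By the Remark following the definition of exact controllability we may assume $u^0=v^0=0$, so it suffices to drive the zero state to an arbitrary target $(u^1,v^1)\in\X$.

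First I would define the linear operator $\Gamma:\X\to\X$ by $\Gamma(\varphi^1,\psi^1)=(u(\cdot,T),v(\cdot,T))$, where $(\varphi,\psi)$ is the solution of the adjoint system \eqref{linadj}-\eqref{linadjbound} with final datum $(\varphi^1,\psi^1)$ and $(u,v)$ is the solution of \eqref{ggln4}-\eqref{gglnb3} driven by the single control $h_1(t)=\varphi_x(L,t)+\frac{ab}{c}\psi_x(L,t)$ (resp. $g_1(t)=a\varphi_x(L,t)+\frac1c\psi_x(L,t)$ for system \eqref{gglnb3'}). The hidden trace regularity of Proposition \ref{hiddenregularities} guarantees $h_1\in L^2(0,T)$ (resp. $g_1\in L^2(0,T)$), so $(u,v)\in\ZZ_T$ is well defined by Proposition \ref{prop2} and $\Gamma$ is bounded.

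Next I would pair $\Gamma(\varphi^1,\psi^1)$ with $(\varphi^1,\psi^1)$ and invoke the equivalent weak formulation of Lemma \ref{equicontrol3}. With the above choice of control this collapses the right-hand side to a single square, yielding
\begin{equation*}
\left(\Gamma(\varphi^1,\psi^1),(\varphi^1,\psi^1)\right)_{(L^2(0,L))^2}=\left\|\varphi_x(L,\cdot)+\frac{ab}{c}\psi_x(L,\cdot)\right\|_{L^2(0,T)}^2
\end{equation*}
(and the analogous identity with $a\varphi_x(L,\cdot)+\frac1c\psi_x(L,\cdot)$ in the second case). The crucial step is then to transfer the observability inequality \eqref{obineq3}, which is stated for the back-forward system \eqref{linadj1}-\eqref{linadjbound1}, back to the backward system \eqref{linadj}-\eqref{linadjbound}: under the change of variables $x\mapsto L-x$, $t\mapsto T-t$ the boundary point $x=0$ of the back-forward problem corresponds to $x=L$ of the backward problem, so \eqref{obineq3} becomes $\|(\varphi^1,\psi^1)\|_\X^2\le C\|\varphi_x(L,\cdot)+\frac{ab}{c}\psi_x(L,\cdot)\|_{L^2(0,T)}^2$. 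Combined with the identity above this shows the bilinear form is coercive, $\left(\Gamma(\varphi^1,\psi^1),(\varphi^1,\psi^1)\right)\ge C^{-1}\|(\varphi^1,\psi^1)\|_\X^2$.

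Finally, the Lax--Milgram theorem gives that $\Gamma$ is invertible on $\X$. Hence for the prescribed target $(u^1,v^1)\in\X$ we set $(\varphi^1,\psi^1):=\Gamma^{-1}(u^1,v^1)$, solve the adjoint system to obtain the trace defining $h_1$ (resp. $g_1$), and the resulting solution $(u,v)$ of \eqref{ggln4}-\eqref{gglnb3} (resp. \eqref{ggln4}-\eqref{gglnb3'}) satisfies $(u(\cdot,T),v(\cdot,T))=(u^1,v^1)$, which is the desired exact controllability. I expect the main subtlety to be precisely this transfer step: keeping track of which spatial endpoint carries the observed trace after the space-time reflection, and checking that the smallness condition \eqref{Lsmall} --- the hypothesis guaranteeing \eqref{obineq3} via Proposition \ref{prop6} --- is exactly what is needed for coercivity, since without it the multiplier estimate no longer absorbs the interior term $\frac1T\|(\varphi,\psi)\|_{L^2(0,T;\X)}^2$.
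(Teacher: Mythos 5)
Your proposal is correct and follows essentially the same route as the paper: the paper's own proof of Theorem \ref{teo5} also defines the control-to-state map with $h_1=\varphi_x(L,\cdot)+\frac{ab}{c}\psi_x(L,\cdot)$ (resp. $g_1=a\varphi_x(L,\cdot)+\frac{1}{c}\psi_x(L,\cdot)$) and concludes by the observability inequality \eqref{obineq3} (resp. \eqref{obineq3'}) together with Lax--Milgram. Your additional care about transferring the observed trace from $x=0$ of the back-forward system to $x=L$ of the backward system under the reflection $x\mapsto L-x$, $t\mapsto T-t$ is a point the paper leaves implicit, and is handled correctly.
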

\begin{proof}
Consider the map
\begin{equation*}
\begin{tabular}{r c c c}
$\Gamma :$ & $L^2(0, L) \times L^2(0, L)$        &  $\longrightarrow$ & $L^2(0, L) \times L^2(0, L)$ \\
           & $(\varphi^1(\cdot), \psi^1(\cdot))$ &  $\longmapsto$     & $\Gamma(\varphi^1(\cdot), \psi^1(\cdot))=(u(\cdot,T), v(\cdot,T))$
\end{tabular}
\end{equation*}
where $(u,v)$ is the solution of \eqref{ggln4}-\eqref{gglnb2},  with
\begin{equation*}
\begin{cases}
h_1(t)=\varphi_x(L,t)+\frac{ab}{c}\psi_x(L,t), \\
g_1(t)= a\varphi_x(L,t)+\frac{1}{c}\psi_x(L,t),
\end{cases}
\end{equation*}
 and $(\varphi,\psi)$ is the solution of the system \eqref{linadj}-\eqref{linadjbound} with initial data $(\varphi^1,\psi^1)$. By \eqref{obineq3} (resp. \eqref{obineq3'}) and the Lax-Milgram theorem, the proof is achieved.
\end{proof}

\section{The Nonlinear Control System}\label{Sec4}
We are now in position to prove our main result considering several configurations of the control in the boundary conditions.	 Let $T>0$, from Theorems \ref{teo1}, \ref{teo2}, \ref{teo3}, \ref{teo4} and \ref{teo5}, we can define the bounded linear operators
\begin{equation*}
\Lambda_i : \X \times \X \longrightarrow \HH_T \times \HH_T \qquad (i=1,2,3,4,5),
\end{equation*}
such that, for any $(u^0, v^0) \in \X$ and $(u^1, v^1) \in \X$,
$$\Lambda_i\left(  \left( \begin{array}{cc} u^0\\v^0 \end{array}\right), \left( \begin{array}{cc} u^1\\v ^1 \end{array}\right)\right  ):= \left( \begin{array}{cc} \vec{h}_i\\ \vec{g}_i \end{array}\right), $$
where $\vec{h}_i$ and $\vec{g}_i$ were defined in the Introduction.

\begin{proof}[Proof of Theorem \ref{main_theo}]
We treat the nonlinear problem \eqref{gg1}-\eqref{gg2} using a classical fixed point argument.

According to Remark \ref{integralform}, the solution can be written as
\begin{align*}
\left( \begin{array}{cc} u(t)\\v(t) \end{array}\right) =&  W_0(t) \left( \begin{array}{cc} u_0\\v_0 \end{array}\right) + W_{bdr}(t)\left(\begin{array}{cc}\vec{h}_i \\  \vec{g}_i   \end{array}\right)\\
&- \int_0^t W_0 (t-\tau)\left( \begin{array}{cc}  a_1 (vv_x)(\tau)+a_2 (uv)_x(\tau) \\  \frac{r}{c}v_x(\tau)+ \frac{a_2b}{c}(uu_x)(\tau)+\frac{a_1b}{c}(uv)_x(\tau) \end{array}\right) d\tau,
\end{align*}
for $i=1, 2, 3, 4, 5$, where $\{W_0(t)\}_{t\ge 0}$ and  $\{W_{bdr}(t)\}_{t\ge 0}$ are  the operators defined in Proposition \ref{prop1}. We only analyze the case $i=1$, since the other cases are analogous we will omit them.

For $u,v \in \ZZ_T$,  let us define
$$\left(\begin{array}{cc} \upsilon \\ \nu (T,u,v)\end{array}\right)  := \int_0^T W_0(T-\tau)   \left( \begin{array}{cc}  a_1  (vv_x)(\tau)  + a_2 (uv)_x (\tau) \\
 \frac{a_2b}{c}(uu_x)(\tau) + \frac{a_2b}{c} (uv)_x(\tau) \end{array} \right)  d\tau  $$
 and consider the map
\begin{align*}
\Gamma\left( \begin{array}{cc} u\\v \end{array}\right) = & W_0(t) \left( \begin{array}{cc} u^0\\v^0 \end{array}\right) + W_{bdr}(x)  \Lambda_1\left(    \left( \begin{array}{cc}   u^0\\v^0  \end{array}\right),  \left( \begin{array}{cc}  u^1 \\ v^1 \end{array}\right) +  \left( \begin{array}{cc} v \\  \nu(T,u,v) \end{array}\right) \right) \\
&- \int_0^t W_0 (t-\tau)
\left( \begin{array}{cc}  a_1 (vv_x)(\tau)+a_2 (uv)_x(\tau) \\  \frac{r}{c}v_x(\tau)+ \frac{a_2b}{c}(uu_x)(\tau)+\frac{a_1b}{c}(uv)_x(\tau) \end{array}\right) d\tau.
\end{align*}
If we choose
\begin{equation}\label{controli}
\left( \begin{array}{cc} \vec{h}_1 \\ \vec{g}_1 \end{array}\right)  = \Lambda_1\left(    \left( \begin{array}{cc}   u^0\\v^0  \end{array}\right),  \left( \begin{array}{cc}  u^1 \\ v^1 \end{array}\right) +  \left( \begin{array}{cc} v \\  \nu(T,u,v) \end{array}\right) \right),
\end{equation}
from Theorem \ref{teo3}, we get
$$\Gamma\left( \begin{array}{cc} u\\v \end{array}\right)\Big|_{t=0}= \left( \begin{array}{cc} u^0\\v^0\end{array}\right)$$
and
$$\Gamma\left( \begin{array}{cc} u\\v \end{array}\right)\Big|_{t=T}= \left( \begin{array}{cc} u^1\\v^1\end{array}\right)+\left(\begin{array}{cc} v \\ \nu (T,u,v)\end{array}\right)-\left(\begin{array}{cc} v \\ \nu (T,u,v)\end{array}\right)= \left( \begin{array}{cc} u^1\\v^1\end{array}\right).$$
Now we prove that the map $\Gamma$ is a contraction in an appropriate metric space, then its fixed point $(u,v)$ is the solution of \eqref{gg1}-\eqref{gg2} with $\vec{h}_1$ and $\vec{g}_1$ defined by \eqref{controli}, satisfying \eqref{exactcontrol'a}.  In order to prove the existence of the fixed point we apply the Banach fixed point theorem to the restriction of $\Gamma$ on the closed ball
$$B_r=\left\{ ( u, v)  \in  \ZZ_T : \left\| ( u, v )\right\|_{\ZZ_T} \le r \right\},$$
for some $r>0$.
\begin{itemize}
\item[(i)]\textit{$\Gamma$ maps $B_r$ into itself.}
\end{itemize}
Using Proposition \ref{prop2} there exists a constant $C_1>0$, such that

\begin{align*}
\left\|\Gamma\left( \begin{array}{cc} u\\v \end{array}\right) \right\|_{\ZZ_T}  \le&   C_1\left\lbrace  \left\|  \left( \begin{array}{cc} u^0\\v^0 \end{array}\right) \right\|_{\X}
 +\left\| \Lambda_1\left(    \left( \begin{array}{cc}   u^0\\v^0  \end{array}\right),  \left( \begin{array}{cc}  u^1 \\ v^1 \end{array}\right) +  \left( \begin{array}{cc} v \\  \nu (T,u,v)\end{array}\right) \right)\right\|_{\HH_T}
 \right\rbrace\\
&+C_1\left\lbrace\int_0^t \left\| \left( \begin{array}{cc}  a_1 (vv_x)(\tau)+a_2 (uv)_x(\tau) \\  \frac{r}{c}v_x(\tau)+ \frac{a_2b}{c}(uu_x)(\tau)+\frac{a_1b}{c}(uv)_x(\tau) \end{array}\right)\right\|_{\X} d\tau\right\rbrace.
\end{align*}
Moreover, since
\begin{multline*}
\left\| \Lambda_1\left(    \left( \begin{array}{cc}   u^0\\v^0  \end{array}\right),  \left( \begin{array}{cc}  u^1 \\ v^1 \end{array}\right) +  \left( \begin{array}{cc} v \\  \nu (T,u,v)\end{array}\right) \right)\right\|_{\HH_T} \leq C_2 \left\lbrace \left\| \left( \begin{array}{cc}   u^0\\v^0  \end{array}\right) \right\|_{\X}+ \left\|\left( \begin{array}{cc}  u^1 \\ v^1 \end{array}\right) \right\|_{\X} \right. \\
\left. + \left\| \left( \begin{array}{cc} v \\  \nu (T,u,v)\end{array}\right)\right\|_{\X} \right\rbrace,
\end{multline*}
applying Lemma \ref{lem3}, we can deduce that
\begin{equation*}
\left\|\Gamma\left( \begin{array}{cc} u\\v \end{array}\right) \right\|_{\ZZ_T}  \le   C_3\delta +C_4(r+1)r,
\end{equation*}
where $C_4$ is a constant depending only $T$. Thus, choosing $r$ and $\delta$ such that $$ r=2C_3\delta$$ and $$ 2C_3C_4\delta +C_4\leq \frac12,$$ the operator $\Gamma$ maps $B_r$ into itself for any $(u,v) \in \ZZ_T$.
\begin{itemize}
\item[(ii)]\textit{$\Gamma$ is contractive.}
\end{itemize}
Proceeding as the proof of Theorem \ref{nonlinearteo}, we obtain
\begin{equation*}
\left\|\Gamma\left( \begin{array}{cc} u\\v \end{array}\right) - \Gamma\left( \begin{array}{cc} \widetilde{u}\\\widetilde{v} \end{array}\right) \right\|_{\ZZ_T}  \le   C_5(r+1)r \left\|\left( \begin{array}{cc} u-\widetilde{u}\\v-\widetilde{v} \end{array}\right) \right\|_{\ZZ_T},
\end{equation*}
for any $(u,v), (\widetilde{u},\widetilde{v}) \in B_r$ and a constant $C_5$ depending only on $T$. Thus, taking $\delta >0$, such that $$\gamma= 2C_3C_5\delta +C_5 < 1,$$ we obtain
\begin{equation*}
\left\|\Gamma\left( \begin{array}{cc} u\\v \end{array}\right) - \Gamma\left( \begin{array}{cc} \widetilde{u}\\\widetilde{v} \end{array}\right) \right\|_{\ZZ_T}  \le   \gamma \left\|\left( \begin{array}{cc} u-\widetilde{u}\\v-\widetilde{v} \end{array}\right) \right\|_{\ZZ_T}.
\end{equation*}
Therefore, the map $\Gamma$ is a contraction. Thus, from (i), (ii) and the Banach fixed point theorem, $\Gamma$ has a fixed point in $B_r$  and its fixed point is the desired solution. The proof of Theorem \ref{main_theo} is, thus, complete.
\end{proof}

\noindent\textbf{Acknowledgments:}
Roberto A. Capistrano--Filho was supported by CNPq (Brazil), Project PDE, grant 229204/2013-9, Fernando A. Gallego was supported by CAPES (Brazil) and Ademir F. Pazoto was partially supported by CNPq (Brazil).

\end{document}